\documentclass[reqno]{amsart}

\pdfpagewidth 8.5in
\pdfpageheight 11in

\addtolength{\oddsidemargin}{-.1in}
\addtolength{\evensidemargin}{-.1in}
\addtolength{\textwidth}{.2in}

\usepackage{graphicx}
\usepackage{amsrefs}
\usepackage{amsmath,amsthm,amssymb,amscd,latexsym}
\usepackage{tikz}
\usepackage{multirow}
\usepackage[colorlinks]{hyperref}
\hypersetup{colorlinks,linkcolor=black,filecolor=black,urlcolor=black,citecolor=black}

\newtheorem{theorem}{Theorem}
\newtheorem{corollary}[theorem]{Corollary}

\newtheorem{fact}[theorem]{Fact}

\newtheorem{observation}[theorem]{Observation}
\newtheorem{proposition}[theorem]{Proposition}
\newtheorem{lemma}[theorem]{Lemma}
\theoremstyle{definition}
\newtheorem{remark}[theorem]{Remark}
\newtheorem{question}{Problem}
\newtheorem{definition}[theorem]{Definition}

{
\theoremstyle{remark}

\newtheorem*{claim*}{Claim}
}
{
\theoremstyle{definition}
}

\numberwithin{theorem}{section}
\numberwithin{equation}{section}

\newcommand{\st}{\mbox{ : }}
\newcommand{\stm}{\setminus}
\newcommand{\sbs}{\subset}

\newcommand{\old}[1]{}
\newcommand{\arxiv}[1]{{\tt \href{http://arxiv.org/abs/#1}{arXiv:#1}}}

\newcommand{\rst}[1]{\ensuremath{{\mathbin\upharpoonright}%
\raise-.5ex\hbox{$#1$}}}  

\newcommand{\tri}{\triangle}

\renewcommand{\Re}[1]{\mbox{Re}(#1)}
\renewcommand{\Im}[1]{\mbox{Im}(#1)}

\newcommand{\aaa}{\mathcal{A}}
\newcommand{\ppp}{\mathcal{P}}
\newcommand{\bbb}{\mathcal{B}}
\newcommand{\ccc}{\mathcal{C}}
\newcommand{\sss}{\mathcal{S}}
\newcommand{\ttt}{\mathcal{T}}
\newcommand{\LLL}{\mathcal{L}}

\newcommand{\R}{\mathbb{R}}
\newcommand{\C}{\mathbb{C}}
\newcommand{\Z}{\mathbb{Z}}

\newcommand{\N}{\mathbb{N}}
\newcommand{\ep}{\varepsilon}
\newcommand{\trace}{\operatorname{tr}}
\newcommand{\abs}[1]{\left|#1\right|}

\newcommand{\clg}[1]{\left\lceil#1\right\rceil}

\renewcommand{\phi}{\varphi}



\title{Apollonian structure in the Abelian sandpile}

\author{Lionel Levine}
\address{Department of Mathematics, Cornell University, Ithaca, NY 14853. \url{http://www.math.cornell.edu/~levine}}

\author{Wesley Pegden}
\address{Carnegie Mellon University, Pittsburgh PA 15213.}
\email{wes@math.cmu.edu}

\author{Charles K. Smart}
\address{Massachusetts Institute of Technology, Cambridge, MA 02139}
\email{smart@math.mit.edu}

\date{May 22, 2014}
\keywords{abelian sandpile, apollonian circle packing, apollonian triangulation, obstacle problem, scaling limit, viscosity solution}
\subjclass[2010]{60K35, 35R35} 
\thanks{The authors were partially supported by NSF grants DMS-1004696, DMS-1004595 and DMS-1243606.}

\begin{document}

\begin{abstract}
The \emph{Abelian sandpile} process evolves configurations of chips on the integer lattice by \emph{toppling} any vertex with at least 4 chips, distributing one of its chips to each of its 4 neighbors.  When begun from a large stack of chips, the terminal state of the sandpile has a curious fractal structure which has remained unexplained.  Using a characterization of the quadratic growths attainable by integer-superharmonic functions, we prove that the \emph{sandpile PDE} recently shown to characterize the scaling limit of the sandpile admits certain fractal solutions, giving a precise mathematical perspective on the fractal nature of the sandpile.
\end{abstract}

\maketitle

\section{Introduction}

\subsection{Background} First introduced in 1987 by Bak, Tang and Wiesenfeld \cite{Bak-Tang-Wiesenfeld} as a model of self-organized criticality, the Abelian sandpile is an elegant example of a simple rule producing surprising complexity. In its simplest form, the sandpile evolves a configuration  $\eta : \Z^2 \to \N$ of {\em chips} by iterating a simple process: find a lattice point $x \in \Z^2$ with at least four chips and {\em topple} it, moving one chip from $x$ to each of its four lattice neighbors.

When the initial configuration has finitely many total chips, the sandpile process always finds a {\em stable} configuration, where each lattice point has at most three chips.  Dhar \cite{Dhar} observed that the resulting stable configuration does not depend on the toppling order, which is the reason for terming the process ``Abelian.''  When the initial configuration consists of a large number of chips at the origin, the final configuration has a curious fractal structure \cite{Liu-Kaplan-Gray,Ostojic,Dhar-Sadhu-Chandra,strings,pathesis} which (after rescaling) is insensitive to the number of chips. In 25 years of research (see \cite{Levine-Propp} for a brief survey, and \cite{Dhar06,Redig} for more detail)
this fractal structure has resisted explanation or even a precise description.  

If $s_n : \Z^2 \to \N$ denotes the stabilization of $n$ chips placed at the origin, then the rescaled configurations
\begin{equation*}
\bar s_n(x) := s_n([n^{1/2} x])
\end{equation*}
(where $[x]$ indicates a closest lattice point to $x\in \R^2$) converge to a unique limit $s_\infty$.  This article presents a partial explanation for the apparent fractal structure of this limit.

The convergence $\bar s_n \to s_\infty$ was obtained Pegden-Smart \cite{charlie-wes}, who used viscosity solution theory to identify the continuum limit of the least action principle of Fey-Levine-Peres \cite{Fey-Levine-Peres}.  We call a $2 \times 2$ real symmetric matrix $A$ {\em stabilizable} if there is a function $u : \Z^2 \to \Z$ such that
\begin{equation}
\label{l.gamma}
u(x) \geq \frac{1}{2} x^t A x \quad \mbox{and} \quad \Delta^1 u(x) \leq 3,
\end{equation}
for all $x \in \Z^2$, where
\begin{equation}
\Delta^1 u(x)=\sum_{y \sim x}(u(y)-u(x))
\label{l.lap}
\end{equation}
is the discrete Laplacian of $u$ on $\Z^2$. (We establish a direct correspondence between stabilizable matrices and infinite stabilizable sandpile configurations in Section \ref{s.gamma}.)  It turns out that the closure $\bar \Gamma$ of the set $\Gamma$ of stabilizable matrices determines $s_\infty$.

\begin{figure}[t]
\centering
\includegraphics[width=.95\linewidth]{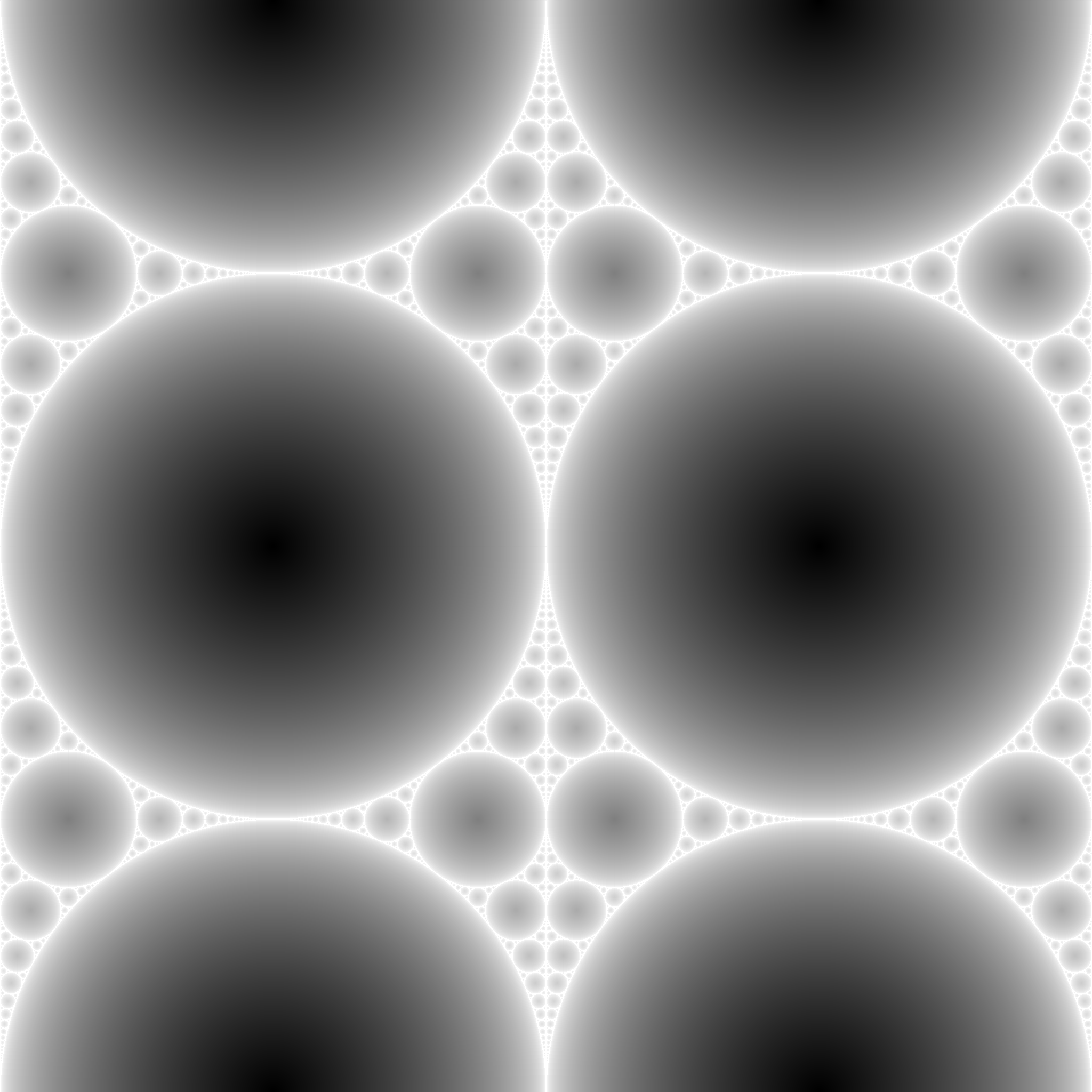}
\caption{\label{f.numerical}
The boundary of $\Gamma$. The shade of gray at location $(a,b) \in [0,4]\times [0,4]$ indicates the largest $c \in [2,3]$ such that $M(a,b,c) \in \Gamma$.  White and black correspond to $c=2$ and $c=3$, respectively.
}
\end{figure}

\begin{theorem}[Existence Of Scaling Limit, \cite{charlie-wes}]
\label{t.charlie-wes}
The rescaled configurations $\bar s_n$ converge weakly-$*$ in $L^\infty(\R^2)$ to $s_\infty = \Delta v_\infty$, where
\begin{equation}
\label{obstacle}
v_\infty := \min \{ w \in C(\R^2) \mid w \geq - \Phi \mbox{ and } D^2 (w + \Phi) \in \bar \Gamma \}.
\end{equation}
Here $\Phi(x) := - (2 \pi)^{-1} \log |x|$ is the fundamental solution of the Laplace equation $\Delta \Phi=0$, the minimum is taken pointwise, and the differential inclusion is interpreted in the sense of viscosity.
\end{theorem}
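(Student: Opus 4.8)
The plan is to realize the discrete sandpile as a discrete obstacle problem, to extract a continuum limit by compactness, to identify that limit as a viscosity solution of \eqref{obstacle}, and to finish with a comparison principle that upgrades subsequential convergence to convergence of the whole sequence. I would first invoke the least action principle of Fey--Levine--Peres \cite{Fey-Levine-Peres}: writing $s_n = n\delta_0 + \Delta^1 u_n$, the odometer $u_n \colon \Z^2 \to \N$ recording the number of topplings at each site is the pointwise-smallest nonnegative function obeying the stability constraint $n\delta_0 + \Delta^1 u_n \le 3$. This presents $u_n$ as the solution of a discrete obstacle problem whose active constraint, $\Delta^1 u_n \le 3$ away from the origin, is exactly the discrete inequality appearing in the definition \eqref{l.gamma} of a stabilizable matrix.

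Second, I would rescale, setting $\bar u_n(x) := n^{-1} u_n([n^{1/2} x])$. This scaling is chosen so that a quadratic lower bound $u_n(x) \ge \tfrac12 x^t A x$ transforms, up to lower-order terms, into the rescaled bound $\bar u_n(x) \ge \tfrac12 x^t A x$, matching the Hessian of the limit to the matrix $A$; it also keeps $\bar s_n \in [0,3]$ equal, up to the rescaling, to the Laplacian of $\bar u_n$. The point source $n\delta_0$ produces a logarithmic singularity at the origin which, after rescaling, is precisely the fundamental solution $\Phi$. Thus the natural object is the \emph{continuum odometer} $V := v_\infty + \Phi$, the rescaled limit of $\bar u_n$, which ought to satisfy $V \ge 0$, $D^2 V \in \bar\Gamma$, and a minimality (least action) property. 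Uniform Lipschitz and nondegeneracy estimates, obtained from the bounded-Laplacian constraint together with quadratic barriers built from integer-superharmonic functions, give equicontinuity, so Arzel\`a--Ascoli produces a subsequence converging locally uniformly away from the origin.

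Third --- and this is where I expect the main difficulty --- I would show that any subsequential limit $V$ is a viscosity solution of the inclusion $D^2 V \in \bar\Gamma$ with obstacle $V \ge 0$, equivalently that $v_\infty := V - \Phi$ solves \eqref{obstacle}. The engine is the correspondence between stabilizability of a symmetric matrix and the local behavior of the discrete problem. If a smooth function touches $V$ from below at a point with Hessian $A \notin \bar\Gamma$, then $A$ is not stabilizable, so no integer function exhibits quadratic growth $\tfrac12 x^t A x$ with $\Delta^1 \le 3$, and comparing $u_n$ against the associated discrete quadratic barrier forces a toppling inconsistent with the free-boundary configuration. In the opposite direction, touching from above with $A$ interior to $\bar\Gamma$ uses stabilizability of $A$ to manufacture an admissible competitor lying below $u_n$, contradicting its minimality. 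Because $\Gamma$ is downward closed in the Loewner order (if $A$ is stabilizable and $B \preceq A$ then $B$ is stabilizable), the inclusion is a degenerate-elliptic constraint; and closedness of $\Gamma$ is what lets these discrete-to-continuum comparisons survive the limit, explaining why $\bar\Gamma$ rather than $\Gamma$ appears.

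Finally, I would prove a comparison principle for \eqref{obstacle}: the degenerate ellipticity just noted makes the minimal supersolution $v_\infty$ the unique viscosity solution, so every subsequential limit coincides and the full sequence converges. Since $\bar s_n$ is uniformly bounded and equals, up to rescaling, the Laplacian of $\bar u_n$, the locally uniform convergence of the odometers yields $\bar s_n \to \Delta v_\infty = s_\infty$ weakly-$*$ in $L^\infty(\R^2)$ --- the point mass at the origin being invisible to the weak-$*$ limit --- which is the assertion of the theorem.
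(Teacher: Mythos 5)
This theorem is not proved in the present paper; it is imported verbatim from the companion work \cite{charlie-wes}, which the authors describe as combining the least action principle of \cite{Fey-Levine-Peres} with viscosity-solution theory. Your outline --- odometer as a discrete obstacle problem, parabolic rescaling $\bar u_n(x)=n^{-1}u_n([n^{1/2}x])$, compactness, identification of subsequential limits via the stabilizability of touching quadratics, and a comparison principle to get full convergence --- is exactly that strategy, so it matches the (cited) proof in approach, though of course the hard quantitative steps are only sketched here.
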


Roughly speaking, the sum $u_\infty = v_\infty + \Phi$ is the least function $u \in C(\R^2\stm \{0\})$ that is non-negative, grows like $\Phi$ at the origin, and solves the {\em sandpile PDE}
\begin{equation}
\label{l.sandpde}
D^2 u  \in \partial \Gamma
\end{equation}
in $\{ u > 0 \}$ in the sense of viscosity. Our use of viscosity solutions is described in more detail in the preliminaries; see Section \ref{s.viscosity}. The function $u_\infty$ also has a natural interpretation in terms of the sandpile: it is the limit $u_\infty(x) = \lim_{n \to \infty} n^{-1} u_n([n^{1/2} x])$, where $u_n(x)$ is the number of times $x \in \Z^d$ topples during the formation of $s_n$.  We also recall that weak-$*$ convergence simply captures convergence of the local average value of $\bar s_n$.

\subsection{Apollonian structure} 

The key players in the obstacle problem \eqref{obstacle} are $\Phi$ and $\Gamma$.  The former encodes the initial condition (with the particular choice of $-(2\pi)^{-1} \log |x|$ corresponding to all particles starting at the origin).  The set~$\Gamma$ is a more interesting object: it encodes the continuum limit of the sandpile stabilization rule.
It turns out that $\bar \Gamma$ is a union of downward cones based at points of a certain set $\mathcal{P}$---this is Theorem \ref{t.apollo}, below, which we prove in the companion paper \cite{2014integersuperharmonic}.  The elements of $\mathcal{P}$, which we call \emph{peaks}, are visible as the locally darkest points in Figure \ref{f.numerical}. 

The characterization of  $\bar \Gamma$ is made in terms of Apollonian configurations of circles.  Three pairwise externally tangent circles $C_1,C_2,C_3$ determine an \emph{Apollonian circle packing}, as the smallest set of circles containing them that is closed under the operation of adding, for each pairwise tangent triple of circles, the two circles which are tangent to each circle in the triple.  They also determine a \emph{downward} \emph{Apollonian packing}, closed under adding, for each pairwise-tangent triple, only the smaller of the two tangent circles.   Lines are allowed as circles, and the \emph{Apollonian band circle packing} is the packing $\bbb_0$ determined by the lines $\{ x = 0 \}$ and $\{ x = 2 \}$ and the circle $\{ (x-1)^2 + y^2 = 1 \}$.  Its circles are all contained in the strip $[0,2] \times \R$.

We put the proper circles in $\R^2$ (i.e., the circles that are not lines) in bijective correspondence with real symmetric $2\times 2$ matrices of trace $>2$, in the following way.  To a proper circle $C = \{(x-a)^2 + (y-b)^2 = r^2 \}$ in $\R^2$ we associate the matrix
	\[ m(C) := M(a,b,r+2) \]
where
	\begin{equation}
	\label{l.mparam}
	M(a,b,c):= \frac12 \left[ \begin{matrix} c + a & b \\ b & c - a \end{matrix} \right].
	\end{equation}
We write $S_2$ for the set of symmetric $2 \times 2$ matrices with real entries, and, for $A,B \in S_2$ we write $B \leq A$ if $A-B$ is nonnegative definite.  For a set $\ppp \subset S_2$, we define 
	\[ \ppp^\downarrow := \{B \in S_2 \mid B \leq A \mbox{ for some } A \in \ppp \}, \]
the order ideal generated by $\ppp$ in the matrix order.

Now let $\bbb = \bigcup_{k \in \Z} (\bbb_0 + (2k,0))$ be the extension of the Apollonian band packing to all of $\R^2$ by translation.  Let
	\[ \ppp = \{ m(C) \mid C \in \bbb \}. \]
In the companion paper \cite{2014integersuperharmonic}, a function $g_A:\Z^2\to \Z$ with $\Delta^1 g_A\leq 1$  is constructed for each $A$ such that $A+M(2,0,2)\in \ppp$ whose difference from $\frac 1 2 x^t A x+b_A\cdot x$ is periodic and thus at most a constant, for some linear factor $b_A$.  Moreover, the functions $g_A$ are maximally stable, in the sense that $g\geq g_A$ and $\Delta^1 g\leq 1$ implies that $g-g_A$ is bounded.  By adding $x_1^2$ to each such $g_A$ and $M(2,0,2)$ to each $A$, this construction from \cite{2014integersuperharmonic} gives the following theorem:
\begin{theorem}[\cite{2014integersuperharmonic}]
\label{t.apollo}
$ \bar \Gamma = \ppp^\downarrow. $ \qed
\end{theorem}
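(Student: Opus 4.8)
The plan is to transfer the threshold-three condition defining $\Gamma$ to the threshold-one growth problem analyzed in \cite{2014integersuperharmonic} via the substitution $u \mapsto u - x_1^2$, and then to read off the two inclusions $\ppp^\downarrow \subseteq \bar\Gamma$ and $\bar\Gamma \subseteq \ppp^\downarrow$ separately. The algebraic device is the identity $x_1^2 = \frac12 x^t M(2,0,2)\, x$ together with $\Delta^1(x_1^2) = 2$, so that $\Delta^1(h + x_1^2) = \Delta^1 h + 2$ for every $h : \Z^2 \to \Z$. Hence $A$ is stabilizable precisely when $B := A - M(2,0,2)$ admits an integer function $h$ with $h(x) \geq \frac12 x^t B x$ and $\Delta^1 h \leq 1$. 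I would first record the elementary fact that $\Gamma$, and therefore $\bar\Gamma$, is a downward-closed order ideal: if $A$ is stabilized by $u$ and $B \leq A$, then $\frac12 x^t B x \leq \frac12 x^t A x \leq u$, so the same $u$ stabilizes $B$; closedness of the matrix order then propagates this to $\bar\Gamma$.

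For $\ppp^\downarrow \subseteq \bar\Gamma$ I would use the explicit functions. Fix a peak $P \in \ppp$ and set $A = P - M(2,0,2)$, so that $A + M(2,0,2) = P \in \ppp$ and \cite{2014integersuperharmonic} furnishes $g_A$ with $\Delta^1 g_A \leq 1$ and $g_A - (\frac12 x^t A x + b_A \cdot x)$ periodic, hence bounded. Then $u := g_A + x_1^2$ satisfies $\Delta^1 u \leq 3$ and $u = \frac12 x^t P x + b_A \cdot x + O(1)$. To absorb the linear term and the bounded error I perturb downward: for $\epsilon > 0$ the quantity $\frac{\epsilon}{2}|x|^2 + b_A \cdot x + O(1)$ is bounded below, so after adding a suitable integer constant $u$ stabilizes $P - \epsilon I$; letting $\epsilon \downarrow 0$ places $P$ in $\bar\Gamma$. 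As $\bar\Gamma$ is downward closed and this holds for every peak, $\ppp^\downarrow \subseteq \bar\Gamma$.

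For the reverse inclusion $\bar\Gamma \subseteq \ppp^\downarrow$ — the genuinely hard direction — I would pass back to the threshold-one picture and invoke the maximal stability of the $g_A$. The goal is to show every stabilizable $A$ satisfies $A \leq P$ for some peak, equivalently that any integer $h$ with $\Delta^1 h \leq 1$ has quadratic growth dominated by some $m(C) - M(2,0,2)$; the maximality clause, that any $g \geq g_A$ with $\Delta^1 g \leq 1$ differs from $g_A$ by a bounded amount, is exactly what prevents achievable growths from exceeding the Apollonian peaks. I would conclude by combining $\Gamma \subseteq \ppp^\downarrow$ with the closedness of $\ppp^\downarrow$, giving $\bar\Gamma = \overline{\Gamma} \subseteq \overline{\ppp^\downarrow} = \ppp^\downarrow \subseteq \bar\Gamma$.

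The main obstacle is this upper bound, which rests on the full force of \cite{2014integersuperharmonic}: one must establish that the family $\{g_A\}$, indexed by the Apollonian band packing, is exhaustive, i.e. that no threshold-one integer-superharmonic function grows quadratically faster than the maximal $g_A$ in any direction, which is where the recursive tangency structure of the packing enters. A secondary but necessary point is the geometric lemma that $\ppp^\downarrow$ is already closed: the peaks accumulate on the trace-two boundary, since $m(C) = M(a,b,r+2) \to M(a,b,2)$ as packing circles shrink, and one must verify that the union of downward cones they generate is closed so that the left-hand closure does not enlarge the set.
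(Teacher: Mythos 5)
Your proposal matches the paper's treatment: the paper gives no independent proof of Theorem~\ref{t.apollo}, but obtains it from the companion paper via exactly the shift $u \mapsto u + x_1^2$, $A \mapsto A + M(2,0,2)$ that you describe, with the existence of the functions $g_A$ yielding $\ppp^\downarrow \subseteq \bar\Gamma$ and their maximal stability (the real content of \cite{2014integersuperharmonic}) yielding the reverse inclusion. Your $\epsilon$-perturbation to absorb the linear factor $b_A \cdot x$ and the bounded error is a correct elaboration of a step the paper leaves implicit, and your flagging of the exhaustiveness of the family $\{g_A\}$ and of the closedness of $\ppp^\downarrow$ correctly locates where the deferred difficulty lies.
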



\begin{figure}[t]
\centering
\includegraphics[angle=-90, width=.7\linewidth]{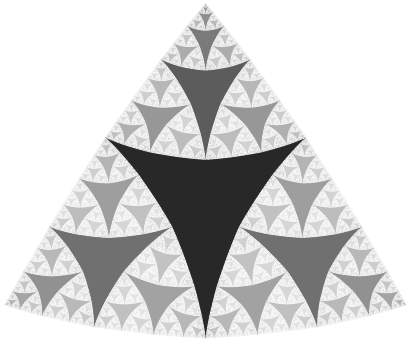}
\caption{\label{f.sandfractal} An Apollonian triangulation is a union of Apollonian triangles meeting at right angles, whose intersection structure matches the tangency structure of their corresponding circles.  The solution $u$ of Theorem~\ref{t.mainpq} has constant Laplacian on each Apollonian triangle, as indicated by the shading (darker regions are where $\Delta u$ is larger).}
\end{figure}
 
\subsection{The sandpile PDE}
Theorem~\ref{t.apollo} allows us to formulate the sandpile PDE \eqref{l.sandpde} as
	\begin{equation} \label{e.apollopde} D^2 u \in \partial \ppp^\downarrow. \end{equation}
Our main result, Theorem~\ref{t.mainpq} below, constructs a family of piecewise quadratic solutions to the this PDE.  The supports of these solutions are the closures of certain fractal subsets of $\R^2$ which we call \emph{Apollonian triangulations}, giving an explanation for the fractal limit $\bar s_\infty$.

Of course, every matrix $A =M(a,b,c) \in S_2$ with $\trace(A) = c >2$ is now associated to a unique proper circle $C = c(A) = m^{-1}(A)$ in $\R^2$. We say two matrices are (\emph{externally}) \emph{tangent} precisely if their corresponding circles are (externally) tangent.   Given pairwise externally tangent matrices $A_1,A_2,A_3$, denote by $\aaa(A_1,A_2,A_3)$ (resp.~$\aaa^-(A_1,A_2,A_3)$) the set of matrices corresponding to the Apollonian circle packing (resp.~downward Apollonian packing) determined by the circles corresponding to $A_1,A_2,A_3$.

\begin{theorem}
[Piecewise Quadratic Solutions]
\label{t.mainpq}
For any pairwise externally tangent matrices $A_1,A_2,A_3 \in S_2$, there is a nonempty convex set $Z \subset \R^2$ and a function $u \in C^{1,1}(Z)$ satisfying \[ D^2 u \in \partial \aaa(A_1,A_2,A_3)^\downarrow \] in the sense of viscosity.  Moreover, $Z$ decomposes into disjoint open sets (whose closures cover $Z$) on each of which $u$ is quadratic with Hessian in $\aaa^-(A_1,A_2,A_3)$.
\end{theorem}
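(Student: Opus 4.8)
The plan is to build $u$ directly from the circle–matrix correspondence, placing on the region of each circle the quadratic whose Hessian is that circle's matrix, and to let the Apollonian triangulation organize how these quadratics fit together. Two facts about the correspondence \eqref{l.mparam} drive everything. First, for proper circles $C=(p,r)$, $C'=(p',r')$ with $A=m(C)$, $A'=m(C')$ one computes $\trace(A-A')=r-r'$ and $\det(A-A')=\tfrac14\big((r-r')^2-|p-p'|^2\big)$; hence external tangency $|p-p'|=r+r'$ forces $\det(A-A')=-rr'<0$, so $A-A'$ is indefinite of rank $2$. Thus two quadratics with externally tangent Hessians can agree to first order at a single point only, where their difference is a nondegenerate saddle. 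Second, a circle collapsing to a point $z=(z_1,z_2)$ has $m(C)\to M(z_1,z_2,2)$, a matrix of trace exactly $2$ lying on the lower boundary of the trace-$>2$ regime. I would take $Z$ to be a convex region such as the triangle spanned by the three pairwise tangency points of $A_1,A_2,A_3$, index the cells $T_C$ of the Apollonian triangulation by the circles $C\in\aaa^-(A_1,A_2,A_3)$ of the downward packing, and set $u=q_C$ on $T_C$ with $q_C$ the quadratic of Hessian $m(C)$ and affine part to be determined.

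The first structural point is that, because every pair of circle matrices differs by a rank-$2$ matrix, no two cells can share a boundary arc of positive length: if a $C^{1,1}$ function equaled distinct quadratics $q_C,q_{C'}$ on two sets with common boundary of positive length, then the affine map $\nabla q_C-\nabla q_{C'}$ would have to vanish along that boundary, forcing $\rank(A-A')\le1$, which never happens for circles with disjoint interiors. So adjacent cells meet only at the tangency points of their circles, and the apparent edges of the triangulation in Figure~\ref{f.sandfractal} are loci along which infinitely many ever-smaller cells accumulate. Continuity of $\nabla u$ therefore becomes the requirement that, at each shared tangency point, the two quadratics \emph{osculate} (share value and gradient). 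I would fix the affine parts recursively down the packing: with the quadratics of the three bounding circles as boundary data, each Soddy descendant's affine part is chosen to osculate its neighbors. Because this imposes more scalar conditions than there are affine degrees of freedom on each new cell, the heart of the matter is a compatibility statement — that the osculation conditions around each Soddy configuration are consistent — which I expect to reduce to identities in the circle–matrix correspondence, a matrix avatar of the Descartes–Soddy relation. Establishing those identities, and hence a globally consistent assignment $C\mapsto q_C$, is the first place I expect to do real work.

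The second, and I think principal, difficulty is regularity at the fractal limit set of the packing. Off this set only finitely many cells cluster near any point, and there $u$ is $C^{1,1}$ as soon as neighbors osculate; but the tangency points are themselves accumulation points of shrinking circles, and the edges of the triangulation are accumulation loci along which the cell Hessians degenerate to the trace-$2$ matrices $M(z_1,z_2,2)$. To prove $u\in C^{1,1}(Z)$ I would establish a quantitative estimate: the oscillation of $\nabla u$ over a ball of radius $\rho$ decays linearly in $\rho$, obtained by summing a geometric series in the contraction ratios of the Soddy circles crowding the ball, so that $\nabla u$ is Lipschitz even across the fractal boundary. This is where the self-similarity of the packing under its symmetry group is essential — the same symmetry that lets one normalize the initial triple $A_1,A_2,A_3$ by a linear change of coordinates $x\mapsto L^{-1}x$, acting on Hessians by the order-preserving congruence $A\mapsto L^tAL$ and on circles by the induced M\"obius map — and I would expect a soft compactness argument to be insufficient, with a scale-by-scale renormalization estimate doing the work instead.

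Finally I would verify the inclusion $D^2u\in\partial\aaa(A_1,A_2,A_3)^\downarrow$ in the viscosity sense, which the order-ideal structure makes almost formal once the regularity is in hand. In the interior of a cell $D^2u=m(C)$, and $m(C)$ is a \emph{maximal} element of $\aaa(A_1,A_2,A_3)$: any two circle matrices are incomparable in the Loewner order since disjoint interiors give $\det(m(C)-m(C'))<0$. Consequently the subjets $\{X\preceq m(C)\}$ lie in the downward-closed set $\aaa(A_1,A_2,A_3)^\downarrow$ while the superjets $\{X\succeq m(C)\}$ avoid its interior, which is exactly the sub- and supersolution property for the boundary of an order ideal. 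At the limit set the cell Hessians converge to the trace-$2$ matrices $M(z_1,z_2,2)$, which lie on $\partial\aaa(A_1,A_2,A_3)^\downarrow$ as limits of maximal elements; using the regularity estimate to control the second-order jets of $u$ there, I would check that every sub/superjet is squeezed between the matrices of the surrounding circles and so again lands on the boundary. Assembling the consistent family $\{q_C\}$, the convex domain $Z$, the cells $T_C$, and these two verifications yields the asserted solution, with $Z$ decomposing into the open cells on which $u$ is quadratic with Hessian in $\aaa^-(A_1,A_2,A_3)$.
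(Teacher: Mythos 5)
Your outline coincides with the paper's strategy --- index the pieces by the circles of the downward packing $\aaa^-(A_1,A_2,A_3)$, put on each piece the quadratic with Hessian $m(C)$, glue by osculation (value and gradient agreement) at single points since externally tangent matrices differ by an indefinite rank-$2$ matrix, and finish with the a.e.\ $+$ $C^{1,1}$ criterion of Proposition~\ref{p.classical} --- and your local computations ($\det(A-A')=-rr'<0$, hence cells meet only at points) are correct. But the step you defer as ``a matrix avatar of the Descartes--Soddy relation'' is precisely where the theorem lives, and it is not a formal identity you can expect to fall out of the parameterization. What must be shown is: given three mutually osculating quadratics with Hessians $B_1,B_2,B_3$ and a prescribed successor Hessian $B_0$, there exist an affine part for $\phi_0$ \emph{and} three contact points $q_1,q_2,q_3$ at which $\phi_0$ osculates each $\phi_i$; and, critically, that these $q_i$ land exactly where the recursion needs them --- on the boundary curves separating the existing cells, at the points corresponding to the tangency of $C_0$ with $C_i$ --- so that the assignment is consistent when the same cell participates in infinitely many Soddy triples. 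The paper does this in the Claim of Section~\ref{s.pq} via Lemma~\ref{l.V4exists} (an explicit solvable system locating a center $X_0$ with trilinear coordinates $\bigl(\frac{\trace(A_i)-\trace(B)}{\trace(A_i)}\bigr)^{-1/2}$, which uses the quantitative inequality $\trace(v_i\otimes v_i)>2(\trace(B)-2)$ to keep the relevant denominators positive) and Lemma~\ref{rank1difference} together with Observation~\ref{o.succ} (to rule out the second Soddy solution and identify the contact directions with $\ell^{1/2}$ of the next-generation successors, which is what pins $q_i$ to the Apollonian curve $\gamma_i$). Without this your ``globally consistent assignment $C\mapsto q_C$'' is an assertion, not a construction; note also that the cells themselves are not given in advance --- the existence of the Apollonian triangulation (Theorem~\ref{t.sandfrac}) is proved \emph{in tandem with} the construction of $u$, not quoted as input.

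The second gap is the pair of global statements you also leave as plans. The theorem requires the closures of the open cells to cover $Z$; since the cells meet only at points, this is a genuine measure-theoretic claim, which the paper proves by showing each new Apollonian triangle occupies a universal positive fraction $\kappa$ of the curvilinear region it subdivides (Section~\ref{s.fm}, resting on the $4/7$ area ratio of Lemma~\ref{L.47} and the angle bounds of Observations~\ref{o.Aintangles} and~\ref{o.Acntangles}), so the residual set has measure $(1-\kappa)^t\to 0$. Your geometric-series idea for the Lipschitz bound on $\nabla u$ is the right instinct --- the contraction constant $\beta\le 2/3$ in the proof of Theorem~\ref{t.appcurve} is exactly such an estimate --- but neither it nor the full-measure claim is established in your proposal, and without the latter the a.e.\ hypothesis of Proposition~\ref{p.classical} is unavailable, so your final viscosity verification (which is otherwise fine, and indeed simpler than your jet analysis at the limit set once Proposition~\ref{p.classical} applies) does not yet go through.
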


This theorem is illustrated in Figure~\ref{f.sandfractal}.  We call the configuration of pieces where $D^2u$ is constant 
an \emph{Apollonian triangulation}.  Our geometric characterization of Apollonian triangulations begins with the definition of {\em Apollonian curves} and {\em Apollonian triangles} in Section \ref{s.geo}. We will see that three vertices in general position determine a unique Apollonian triangle with those vertices, via a purely geometric construction based on medians of triangles.  We will also show that any Apollonian triangle occupies exactly $4/7$ of the area of the Euclidean triangle with the same vertices. 

 An Apollonian triangulation, which we precisely define in Section \ref{s.arec}, is a union of Apollonian triangles corresponding to circles in an Apollonian circle packing, where pairs of Apollonian triangles corresponding to pairs of intersecting circles meet at right angles. The existence of Apollonian triangulations is itself nontrivial and is the subject of Theorem \ref{t.sandfrac}; analogous discrete structures were constructed by Paoletti in his thesis \cite{pathesis}.     Looking at the Apollonian fractal in Figure \ref{f.sandfractal} and recalling the $SL_2(\Z)$ symmetries of Apollonian circle packings, it is natural to wonder whether nice symmetries may relate distinct Apollonian triangulations as well.  But we will see in Section \ref{s.arec} that Apollonian triangles are equivalent under affine transformations, precluding the possibility of conformal equivalence for Apollonian triangulations. 

\begin{figure}[t]
\begin{center}
\includegraphics[height=6.2cm]{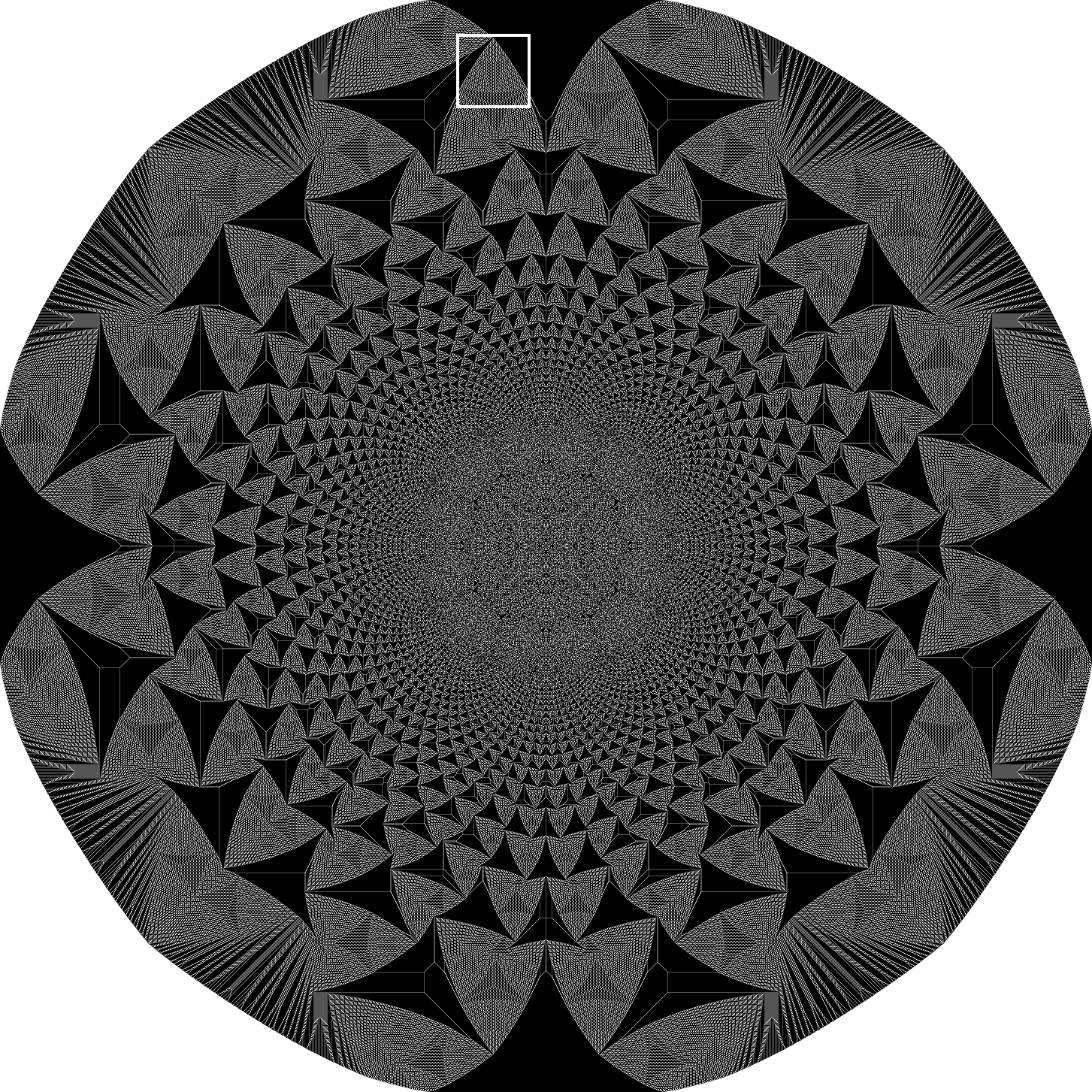}
\includegraphics[height=6.2cm]{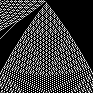}\hspace{.3cm}
\end{center}
\caption{Left: The sandpile $s_n$ for $n = 4\cdot 10^6$. Sites with 0, 1, 2, and 3 chips are represented by 
four different shades of gray.  Right: A zoomed view of the boxed region, one of many that we believe converges to an Apollonian triangulation in the $n\to \infty$ limit.}
\label{f.sandsequence}
\end{figure}

If $C_1,C_2,C_3$ are pairwise tangent circles in the band circle packing, then letting $A_i = m(C_i)$ for $i=1,2,3$, we have $\aaa^-(A_1,A_2,A_3) \subset \ppp$, so the function $u$ in Theorem~\ref{t.mainpq} will be a viscosity solution to the sandpile PDE.  The uniqueness machinery for viscosity solutions gives the following corollary to Theorem \ref{t.mainpq}, which encapsulates its relevance to the Abelian sandpile.

 \begin{corollary}
\label{c.match}
Suppose $U_1, U_2, U_3 \subseteq \R^2$ are connected open sets bounding a convex region $Z$ such that $\bar U_i \cap \bar U_j = \{ x_k \}$ for $\{ i, j, k \} = \{ 1, 2, 3 \}$, where the triangle $\tri x_1x_2x_3$ is acute. If $u_\infty$ is quadratic on each of $U_1, U_2, U_3$ with pairwise tangent Hessians $A_1, A_2, A_3 \in \ppp$, respectively, then $u_\infty$ is piecewise quadratic in $R$ and the domains of the quadratic pieces form the Apollonian triangulation determined by the vertices $x_1, x_2, x_3$.
 \end{corollary}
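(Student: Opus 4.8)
The plan is to exhibit the piecewise-quadratic function supplied by Theorem~\ref{t.mainpq} as an explicit candidate and then force $u_\infty$ to coincide with it on $Z$ by the comparison principle for the sandpile PDE. First I would apply Theorem~\ref{t.mainpq} to the pairwise externally tangent matrices $A_1,A_2,A_3$, producing a convex set (which I will identify with $Z$) and a function $u\in C^{1,1}$ that is quadratic on the pieces of the Apollonian triangulation with Hessians in $\aaa^-(A_1,A_2,A_3)$ and satisfies $D^2u\in\partial\aaa(A_1,A_2,A_3)^\downarrow$. Because $A_1,A_2,A_3\in\ppp$, we have $\aaa^-(A_1,A_2,A_3)\subset\ppp$, and so (exactly as noted in the paragraph preceding the corollary) $u$ is in fact a viscosity solution of the sandpile PDE $D^2u\in\partial\ppp^\downarrow$. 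On the other side, Theorem~\ref{t.charlie-wes} tells us $u_\infty$ solves the same inclusion. Thus on $\mathrm{int}(Z)$ both $u$ and $u_\infty$ are viscosity solutions of one and the same degenerate-elliptic inclusion, and the problem collapses to aligning their boundary data and invoking uniqueness.

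Next I would normalize the placement of the triangulation. The Apollonian triangulation of Theorem~\ref{t.mainpq} is pinned in Hessian space by $A_1,A_2,A_3$, but in the physical plane it is determined only up to translation and up to adding a global affine function to $u$; neither operation changes the Hessians, hence neither changes membership in $\partial\ppp^\downarrow$. I would use these degrees of freedom, together with the geometric uniqueness developed in Section~\ref{s.geo} (three vertices in general position determine a unique Apollonian triangle), to normalize $u$ so that its three corner pieces—the Apollonian triangles carrying Hessians $A_1,A_2,A_3$—coincide as quadratics with $q_i:=u_\infty|_{U_i}$. The point is that the compatibility relations $q_i(x_k)=q_j(x_k)$ forced by the continuity of $u_\infty$ at the vertices $x_1,x_2,x_3$ are precisely what make a single translation-plus-affine normalization align all three corner pieces simultaneously, and this in turn forces the three mutual contact points of the corner pieces onto the prescribed vertices, reproducing the intersection pattern $\bar U_i\cap\bar U_j=\{x_k\}$.

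With this normalization in hand I would check boundary matching and then close the argument by comparison. On each arc of $\partial Z$ the two functions $u$ and $u_\infty$ are quadratics with the same Hessian $A_i$; since such an arc lies on an Apollonian curve and is not contained in any line, agreement of the two quadratics along it upgrades from finitely many points to equality of their affine parts, giving $u=u_\infty$ on all of $\partial Z$. I would then invoke the comparison principle for viscosity solutions of $D^2w\in\partial\ppp^\downarrow$ on the bounded convex domain $Z$ (the uniqueness machinery of \cite{charlie-wes}): treating $u$ as a subsolution and $u_\infty$ as a supersolution yields $u\le u_\infty$ on $Z$, and reversing the roles yields $u\ge u_\infty$, whence $u\equiv u_\infty$. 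Since Theorem~\ref{t.mainpq} describes $u$ as piecewise quadratic with pieces forming the Apollonian triangulation determined by $x_1,x_2,x_3$, this structure transfers verbatim to $u_\infty$.

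The hard part will be the geometric alignment of the previous paragraph rather than the viscosity uniqueness, which I treat as a black box. Concretely, the substantive work is to show that the normalized corner pieces genuinely fill the three corners of $Z$ adjacent to the correct outer regions, with boundary arcs that truly coincide with the bounding arcs of $U_1,U_2,U_3$. This is exactly where the acuteness of $\tri x_1x_2x_3$ is essential: it guarantees that the three externally tangent circles and their Apollonian triangles assemble into a nondegenerate curvilinear triangle $Z$ whose three contact points are genuine corners, so that the hypothesized configuration is realizable and matches the outer boundary of the triangulation. Establishing this—and verifying that $\partial Z$ is regular enough to run the comparison principle—relies on the median characterization of Apollonian triangles from Section~\ref{s.geo}; once it is secured, the remainder is a direct appeal to the uniqueness theory.
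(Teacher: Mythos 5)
Your high-level strategy---produce the candidate piecewise-quadratic function from Theorem~\ref{t.mainpq}/Theorem~\ref{t.pq}, align its corner pieces with the given quadratics, and finish with the comparison principle---is the same as the paper's, but the two steps you defer as ``the hard part'' are where the content lies, and your plan for them has genuine gaps. The ``normalization'' step is not a matter of spending translation and affine degrees of freedom. To place the Apollonian triangulation so that its three corner pieces are exactly $q_1,q_2,q_3$ meeting at $x_1,x_2,x_3$, you must first show that the geometric data $(x_1,x_2,x_3)$ and the algebraic data $(A_1,A_2,A_3)$ are compatible in the precise sense the construction of Section~\ref{s.pq} requires: there must be a single matrix $B$ with $A_i=B^-+\alpha_i\,(x_j-x_k)^\perp\otimes(x_j-x_k)^\perp$ for each $\{i,j,k\}=\{1,2,3\}$. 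This is the content of Lemma~\ref{rank1difference}, applied to the three quadratics, which agree to first order pairwise at the $x_k$ because $u_\infty$ is $C^{1,1}$; it is not automatic from the hypothesis that the $A_i$ are pairwise tangent. Moreover, Observation~\ref{o.succ} leaves \emph{two} candidates for $B$ (the successor of $A_1,A_2,A_3$, or the internally tangent Soddy matrix), and this is exactly where acuteness of $\tri x_1x_2x_3$ enters: it rules out the internally tangent case, forcing $B$ to be the successor so that the relevant packing is the downward one. Your proposal instead assigns acuteness the role of making the curvilinear triangle nondegenerate, which is not how it is used.

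The boundary-matching step also does not work as stated. You want to compare $u$ and $u_\infty$ on $\partial Z$ and you assert that the bounding arcs of $U_1,U_2,U_3$ ``truly coincide'' with the outer arcs of the triangulation, but the hypothesis gives no such thing: the $U_i$ are arbitrary connected open sets on which $u_\infty$ happens to be quadratic, and their boundary arcs facing $Z$ need not be the straight sides of the degenerate corner triangles (nor need $\partial Z$ even lie in the domain of the candidate $u$). The paper avoids this mismatch by extending both the given quadratic pieces and the candidate's degenerate corner pieces into overlapping perpendicular strips $V_i$ and $V_i'$, choosing curves $\gamma_i\subseteq V_i\cap V_i'$ from $x_j$ to $x_k$ on which the two functions are literally the same quadratic, and then applying Proposition~\ref{p.visc} on the region $\Omega$ bounded by $\gamma_1\cup\gamma_2\cup\gamma_3$. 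Some such device is needed before the comparison principle can be invoked with equal boundary data; without it your argument has no region on whose boundary the two solutions are known to agree.
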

\noindent Note that $s_\infty=\Delta v_\infty$ implies $\bar s_\infty$ is piecewise-constant in the Apollonian triangulation.

Let us briefly remark on the consequences of this corollary for our understanding of the limit sandpile.  As observed in \cite{Ostojic,Dhar-Sadhu-Chandra} and visible in Figure \ref{f.sandsequence}, the sandpile $s_n$ for large $n$ features many clearly visible patches, each with its own characteristic periodic pattern of sand (sometimes punctuated by one-dimensional `defects' which are not relevant to the weak-* limit of the sandpile). Empirically, we observe that triples of touching regions of these kinds are always regions where the observed finite $\bar v_n$ correspond (away from the one-dimensional defects) exactly to minimal representatives in the sense of \eqref{l.gamma} of quadratic forms 
\[
\frac 1 2 x^tAx+bx
\]
where the $A$'s for each region are always as required by Corollary \ref{c.match}.  Thus we are confident from the numerical evidence that the conditions required for Corollary \ref{c.match} and thus Apollonian triangulations occur---indeed, are nearly ubiquitous---in $s_\infty$.  Going beyond Corollary \ref{c.match}'s dependence on local boundary knowledge would seem to require an understanding the global geometry of $s_\infty$, which remains a considerable challenge.

\subsection{Overview} The rest of the paper proceeds as follows. In Section \ref{s.prelim}, we review some background material on the Abelian sandpile and viscosity solutions.  In section \ref{s.gamma}, we present an algorithm for computing $\Gamma$ numerically; this provided the first hints towards Theorem \ref{t.apollo}, and now provides the only window we have into sets analogous to $\Gamma$ on periodic graphs in the plane other than $\Z^2$ (see Question \ref{q.tri} in Section \ref{s.Q}).  After reviewing some basic geometry of Apollonian circle packings in Section \ref{s.packings}, we define and study \emph{Apollonian curves}, \emph{Apollonian triangles}, and \emph{Apollonian triangulations} in Sections \ref{s.geo} and \ref{s.arec}.  The proofs of Theorem~\ref{t.mainpq} and Corollary \ref{c.match} come in Section \ref{s.pq} where we construct piecewise-quadratic solutions to the sandpile PDE.  Finally, in Section \ref{s.Q} we discuss new problems suggested by our results.

\section{Preliminaries}
\label{s.prelim}
The preliminaries here are largely section-specific, with Section \ref{s.presand} being necessary for Section \ref{s.gamma} and Sections \ref{s.mg} and \ref{s.viscosity} being necessary for Section \ref{s.pq}.

\subsection{The Abelian sandpile}
\label{s.presand}
  Given a configuration $\eta:\Z^2\to \Z$ of chips on the integer lattice, we define a toppling sequence as a finite or infinite sequence $x_1,x_2,x_3,\ldots$ of vertices to be toppled in the sequence order, such that any vertex topples only finitely many times (thus giving a well-defined terminal configuration).  A sequence is \emph{legal} if it only topples vertices with at least 4 chips, and \emph{stabilizing} if there are at most 3 chips at every vertex in the terminal configuration.    We say that $\eta$ is \emph{stabilizable} if there exists a legal stabilizing toppling sequence.

The theory of the Abelian sandpile begins with the following standard fact:
\begin{proposition}
Any $x\in \Z^2$ topples at most as many times in any legal sequence as it does in any stabilizing sequence.\qed
\label{p.legstab}
\end{proposition}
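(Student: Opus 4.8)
The plan is to compare the two toppling counts pointwise by running the legal sequence step-by-step and catching the first moment it would outpace the stabilizing count. Write $u(x)$ for the number of times $x$ topples in the legal sequence and $v(x)$ for the number of times it topples in the stabilizing sequence; both are finite at every vertex by the definition of a toppling sequence. Let $u_k$ denote the partial odometer recording the topplings performed in the first $k$ steps of the legal sequence, so that $u_k$ is nondecreasing in $k$ and converges pointwise to $u$. Suppose, toward a contradiction, that $u(z_0) > v(z_0)$ for some $z_0 \in \Z^2$.

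First I would isolate the earliest over-budget toppling. Since $u(z_0)$ is finite, the strict inequality $u(z_0) > v(z_0)$ is already witnessed at some finite stage, so the set of steps at which a vertex is toppled beyond its stabilizing count is a nonempty set of positive integers; let $k$ be its least element and $z := x_k$ the vertex toppled at step $k$. By minimality every earlier toppling stayed within budget, which forces $u_{k-1}(w) \le v(w)$ for all $w \in \Z^2$ (tracking the last toppling of each $w$ before step $k$). Since step $k$ is the first toppling to exceed $v(z)$, we have $u_{k-1}(z)+1 > v(z)$ yet $u_{k-1}(z) \le v(z)$, and hence $u_{k-1}(z) = v(z)$ exactly.

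The crux is a one-line monotonicity estimate for the discrete Laplacian. The chip count at $z$ just before step $k$ equals $\eta(z) + \Delta^1 u_{k-1}(z)$, and legality of the sequence forces this quantity to be at least $4$. Using $u_{k-1}(z)=v(z)$ together with $u_{k-1}(w)\le v(w)$ at each neighbor $w\sim z$ gives
\[
\Delta^1 u_{k-1}(z) = \sum_{w\sim z}\bigl(u_{k-1}(w)-v(z)\bigr) \le \sum_{w\sim z}\bigl(v(w)-v(z)\bigr) = \Delta^1 v(z),
\]
so the terminal chip count $\eta(z)+\Delta^1 v(z)$ of the stabilizing sequence is also at least $4$. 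This contradicts the defining property of a stabilizing sequence, namely that its terminal configuration carries at most $3$ chips at every vertex. Therefore $u \le v$ pointwise, which is the claim.

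This is essentially the standard least-action principle, and the only genuine obstacle is the bookkeeping at the critical step: one must verify that the first over-budget toppling occurs with $z$ sitting \emph{exactly} at its budget ($u_{k-1}(z)=v(z)$) while all vertices are weakly below theirs, since it is precisely this pattern of inequalities that makes the comparison $\Delta^1 u_{k-1}(z)\le \Delta^1 v(z)$ point in the direction that produces the contradiction. A small amount of care is also needed to see that the "earliest over-budget step" is well defined when the legal sequence is infinite, which follows from the finiteness of the odometers at each vertex.
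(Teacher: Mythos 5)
Your proof is correct: the ``first over-budget toppling'' argument, the identity $u_{k-1}(z)=v(z)$ combined with $u_{k-1}\le v$ at the neighbors, and the monotonicity of $\Delta^1$ together yield $\eta(z)+\Delta^1 v(z)\ge 4$, contradicting the stabilizing property. The paper states this proposition as a standard fact without proof, and what you have written is precisely the standard argument it is invoking (note it correctly uses only that the terminal configuration of the stabilizing sequence is $\eta+\Delta^1 v\le 3$, not that the stabilizing sequence is legal).
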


\noindent Proposition \ref{p.legstab} implies that to any stabilizable initial configuration $\eta$, we can associate an \emph{odometer function} $v:\Z^2\to \N$ which counts the number of times each vertex topples in any legal stabilizing sequence of topplings.   The terminal configuration of any such sequence of topplings is then given by $\eta+\Delta^1 v$.  Since $v$ and so $\Delta^1 v$ are independent of the particular legal stabilizing sequence, this shows that the sandpile process is indeed ``Abelian'': if we start with some stabilizable configuration $\eta\geq 0$, and topple vertices with at least $4$ chips until we cannot do so any more, then the final configuration $\eta+\Delta^1 v$ is determined by $\eta$.

The discrete Laplacian is monotone, in the sense that $\Delta^1 u(x)$ is decreasing in $u(x)$ and increasing in $u(y)$ for any neighbor $y \sim x$ of $x$ in $\Z^2$. An obvious consequence of monotonicity is that taking a pointwise minimum of two functions cannot increase the Laplacian at a point:

\begin{proposition}
\label{p.minD}
If $u, v : \Z^d \to \Z$, $w := \min \{ u, v \}$, and $w(x) = u(x)$, then $\Delta^1 w(x) \leq \Delta^1 u(x)$.\qed
\end{proposition}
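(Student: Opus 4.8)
The plan is to unwind the definition \eqref{l.lap} of the discrete Laplacian and feed in the two pointwise facts that fall directly out of the hypotheses. First, since $w = \min\{u,v\}$ is a pointwise minimum, we have $w \leq u$ everywhere, and in particular $w(y) \leq u(y)$ for every neighbor $y \sim x$. Second, the hypothesis $w(x) = u(x)$ pins down the value at the center of the stencil, so the $-w(x)$ terms appearing in the four summands can be replaced by $-u(x)$.

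With these two observations in hand, I would simply expand
\[ \Delta^1 w(x) = \sum_{y \sim x}(w(y) - w(x)) = \sum_{y \sim x}(w(y) - u(x)), \]
using $w(x) = u(x)$ in the second equality. Since each summand satisfies $w(y) - u(x) \leq u(y) - u(x)$ by $w(y) \leq u(y)$, summing over the four neighbors gives
\[ \Delta^1 w(x) \leq \sum_{y \sim x}(u(y) - u(x)) = \Delta^1 u(x), \]
which is the desired inequality.

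This is really just the promised instance of monotonicity: lowering the neighbor values from $u(y)$ to $w(y)$ can only decrease the Laplacian, while leaving the center value untouched. There is no genuine obstacle here. The one point deserving care is the role of the hypothesis $w(x) = u(x)$ (as opposed to merely $w(x) \leq u(x)$): it is precisely the \emph{equality} at the center that lets us rewrite $-w(x)$ as $-u(x)$, so that the center terms cancel cleanly against $u$ rather than pushing the estimate in the wrong direction.
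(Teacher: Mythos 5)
Your proof is correct and is exactly the argument the paper has in mind: the paper states this as an immediate consequence of the monotonicity of $\Delta^1$ and omits the proof entirely, and your expansion using $w(y)\leq u(y)$ at the neighbors together with $w(x)=u(x)$ at the center is the intended one-line verification.
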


In particular, given any functions $u,v$ satisfying $\eta+\Delta^1(u)\leq 3$ and $\eta+\Delta^1(v)\leq 3$, their pointwise minimum satisfies the same constraint.  The proof of Theorem~\ref{t.charlie-wes} in \cite{charlie-wes} begins from the \emph{Least Action Principle} formulated in \cite{Fey-Levine-Peres}, which states that the odometer of an initial configuration $\eta$ is the pointwise minimum of all such functions.
\begin{proposition}[Least Action Principle]
\label{p.la}
  If $\eta:\Z^2\to \N$ and $w:\Z^2\to \N$ satisfy $\eta+\Delta^1 w\leq 3$, then $\eta$ is stabilizable, and its odometer $v$ satisfies $v\leq w$.
\end{proposition}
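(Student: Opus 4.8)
The plan is to read the hypothesis $\eta + \Delta^1 w \le 3$ as the statement that $w$ is the toppling-count of a stabilizing (though possibly illegal) sequence, and then to use Proposition~\ref{p.legstab} to squeeze every legal sequence underneath $w$. First I would verify that $w$ really is such a count. Fix any order in which each vertex $x$ is toppled exactly $w(x)$ times; since each $w(x)$ is a finite natural number this is a genuine toppling sequence, with every vertex toppling finitely often. Each toppling of $x$ changes the configuration by $-4$ at $x$ and $+1$ at each neighbor, so after all the topplings the value at a vertex $z$ has changed by $\sum_{y \sim z} w(y) - 4 w(z) = \Delta^1 w(z)$; hence the terminal configuration is exactly $\eta + \Delta^1 w \le 3$, which is stable. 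Thus this sequence is stabilizing with count $w$, and Proposition~\ref{p.legstab} tells me that in \emph{any} legal sequence each vertex $x$ topples at most $w(x)$ times.

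Next I would produce a legal stabilizing sequence, which yields the stabilizability of $\eta$. I would build a ``fair'' greedy legal sequence: enumerate $\Z^2 = \{z_1, z_2, \dots\}$ and proceed in rounds, toppling in round $j$ each of $z_1, \dots, z_j$ once if it currently carries at least $4$ chips. This only topples overloaded vertices, so it is legal, and by the previous paragraph each vertex topples at most $w(\cdot) < \infty$ times; hence every vertex topples finitely often and the terminal configuration is well-defined. It remains to check stability: if some vertex $y$ had at least $4$ chips in the terminal configuration, then—because $y$ and its four neighbors each topple only finitely often—there would be a round after which the value at $y$ is frozen at a number $\ge 4$, and fairness would force $y$ to be toppled in a later round, a contradiction. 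So the terminal configuration is $\le 3$ everywhere, and $\eta$ is stabilizable.

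Finally, the odometer bound is immediate. Since $\eta$ is stabilizable, its odometer $v$ is the (sequence-independent) toppling-count of the legal stabilizing sequence just constructed; that sequence is legal and $w$ is the count of a stabilizing sequence, so Proposition~\ref{p.legstab} gives $v(x) \le w(x)$ for every $x \in \Z^2$.

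The main obstacle is the second step: the configuration $\eta$ need not have finite support, so there is no a priori bound on the total number of topplings, and I must argue both that the fair greedy legal process is well-defined and that it actually reaches a stable configuration rather than running forever at some vertex. The leverage that makes this work is precisely the per-vertex finiteness supplied by Proposition~\ref{p.legstab}, which converts the combinatorial ``fairness'' of the enumeration into genuine stabilization.
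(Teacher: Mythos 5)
Your proof is correct and follows exactly the route the paper indicates for Proposition~\ref{p.la}: associate to $w$ a (possibly illegal) stabilizing sequence with toppling counts $w$, then invoke Proposition~\ref{p.legstab} to bound every legal sequence by $w$ and conclude $v\leq w$. The only difference is that you carefully fill in the existence of a legal stabilizing sequence via the fair greedy procedure, a detail the paper leaves implicit in its one-line remark.
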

Note that the Least Action Principle can be deduced from Proposition \ref{p.legstab} by  associating a stabilizing sequence to $w$.  By considering the function $u=v-1$ for any odometer function $v$, the Least Action Principle implies the following proposition:

\begin{proposition}
  If $\eta:\Z^2\to \Z$ is a stabilizable configuration, then its odometer $v$ satisfies $v(x)=0$ for some $x \in \Z^2$.
\label{p.v0}
\end{proposition}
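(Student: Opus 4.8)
The plan is to exploit the minimality of the odometer encoded in the Least Action Principle, following the hint to consider the function $u = v-1$. First I would recall two basic facts about the odometer $v$ of a stabilizable $\eta$: by construction $v:\Z^2\to\N$ is nonnegative (it counts topplings), and the terminal configuration $\eta+\Delta^1 v$ is stable, so that $\eta+\Delta^1 v\leq 3$ pointwise.

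Arguing by contradiction, I would suppose that $v(x)\geq 1$ for \emph{every} $x\in\Z^2$ and set $u:=v-1$. The crucial observation is that $\Delta^1$ annihilates constants, so $\Delta^1 u = \Delta^1 v$; consequently $\eta+\Delta^1 u = \eta+\Delta^1 v \leq 3$. Under the contradiction hypothesis $u$ is again $\N$-valued, and so it satisfies precisely the hypotheses required of the competitor $w$ in Proposition \ref{p.la}. The Least Action Principle then yields $v\leq u = v-1$, which is absurd. Hence the hypothesis fails and $v(x)=0$ for some $x\in\Z^2$, as claimed.

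The only point that requires any care is checking that $u$ is an admissible competitor in the Least Action Principle. Its nonnegativity is exactly what the contradiction hypothesis supplies, and the constraint $\eta+\Delta^1 u\leq 3$ reduces, via translation invariance of $\Delta^1$, to the stability of the terminal configuration of $\eta$. If one wishes to treat general $\eta:\Z^2\to\Z$ as in the statement (rather than $\eta\geq 0$), one instead invokes the form of the Least Action Principle obtained from Proposition \ref{p.legstab} by associating a stabilizing sequence to $u$, as remarked after Proposition \ref{p.la}. I do not expect a genuine obstacle here: the entire content lies in recognizing that subtracting the constant $1$ leaves the Laplacian---and therefore admissibility---unchanged while strictly decreasing the function, so that a uniformly positive odometer would contradict its own minimality.
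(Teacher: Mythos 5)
Your argument is correct and is exactly the one the paper intends: the paper derives Proposition \ref{p.v0} from the Least Action Principle by considering $u=v-1$, precisely as you do, and your contradiction $v\leq v-1$ is the right way to make that one-line remark rigorous. Your side comment about extending the Least Action Principle from $\eta\geq 0$ to general integer-valued $\eta$ via Proposition \ref{p.legstab} matches the paper's own remark and closes the only loose end.
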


Finally, we note that these propositions generalize in a natural way from $\Z^2$ to arbitrary graphs; in our case, it is sufficient to note that they hold as well on the torus
\begin{equation*}
T_n:=\Z^2/n\Z^2\quad\mbox{for }n\in \Z^+.
\end{equation*}

\subsection{Some matrix geometry}
\label{s.mg}
All matrices considered in this paper are $2\times 2$ real symmetric matrices and we parameterize the space $S_2$ of such matrices via $M : \R^3 \to S_2$ defined in \eqref{l.mparam}. We use the usual matrix ordering: $A \leq B$ if and only if $B - A$ is nonnegative definite.

Of particular importance to us is the downward cone
\begin{equation*}
A^\downarrow := \{ B \in S_2 : B \leq A \}.
\end{equation*}
Recall that if $B \in \partial A^\downarrow$, then $A - B = v \otimes v = v v^t$ for some column vector $v$.  That is, the boundary $\partial A^\downarrow$ consists of all downward rank-$1$ perturbations of $A$.

Our choice of parameterization $M$ was chosen to make $A^\downarrow$ a cone in the usual sense.  Observe that
\begin{equation*}
M(a,b,c) \geq 0 \quad \mbox{if and only if} \quad c \geq (a^2 + b^2)^{1/2}.
\end{equation*}
Moreover:
\begin{observation}  We have
\begin{equation}
v\otimes v = M(u_1,u_2,(u_1^2+u_2^2)^{1/2})
\end{equation}
if and only if $v^2=u$, where $v^2$ denotes the complex square of $v$. \qed
\label{o.vecsq}
\end{observation}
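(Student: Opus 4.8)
The plan is to unfold both sides of the claimed identity into explicit $2\times 2$ matrices and compare entries, identifying $v=(v_1,v_2)^t$ with the complex number $v_1+iv_2$ and $u=(u_1,u_2)$ with $u_1+iu_2$. Writing the outer product directly gives
\[
v\otimes v = \mat{v_1^2 & v_1v_2 \\ v_1v_2 & v_2^2},
\]
while, setting $c=(u_1^2+u_2^2)^{1/2}=\abs{u}$, the parameterization \eqref{l.mparam} gives
\[
M(u_1,u_2,\abs{u}) = \frac12\mat{\abs{u}+u_1 & u_2 \\ u_2 & \abs{u}-u_1}.
\]
So the matrix identity is equivalent to the three scalar equations $2v_1^2=\abs{u}+u_1$, $2v_2^2=\abs{u}-u_1$, and $2v_1v_2=u_2$.

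Next I would recombine these three equations into a more transparent form. Subtracting the second from the first yields $v_1^2-v_2^2=u_1$, while the third reads $2v_1v_2=u_2$; together these are precisely the real and imaginary parts of the complex identity $v^2=(v_1^2-v_2^2)+i\,(2v_1v_2)=u_1+iu_2=u$. Adding the first two equations instead produces the single extra constraint $v_1^2+v_2^2=\abs{u}$. Thus the matrix identity is equivalent to the conjunction of $v^2=u$ with this trace-like condition, and it remains only to check that the latter is redundant.

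This redundancy is the one point requiring a small observation rather than pure bookkeeping, and is the closest thing to an obstacle. Assuming $v^2=u$, I would combine the real and imaginary parts through the Pythagorean identity $(v_1^2-v_2^2)^2+(2v_1v_2)^2=(v_1^2+v_2^2)^2$, which gives $(v_1^2+v_2^2)^2=u_1^2+u_2^2=\abs{u}^2$; since $v_1^2+v_2^2\ge 0$ and $\abs{u}\ge 0$, taking square roots forces $v_1^2+v_2^2=\abs{u}$. Equivalently, this is just the identity $\abs{v}^2=\abs{v^2}$. Hence $v^2=u$ already implies the extra constraint, and therefore all three scalar equations and the matrix identity. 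For the reverse implication, the matrix identity gives the three scalar equations, whose combination (the first minus the second, together with the third) is precisely $v^2=u$. This establishes the biconditional in both directions.
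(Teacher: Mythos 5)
Your proof is correct. The paper states this Observation without proof (the terminal \qed marks it as immediate), and your entry-by-entry comparison --- reducing the matrix identity to $v_1^2-v_2^2=u_1$, $2v_1v_2=u_2$, and $v_1^2+v_2^2=(u_1^2+u_2^2)^{1/2}$, then noting the last condition is redundant because $\lvert v^2\rvert=\lvert v\rvert^2$ --- is precisely the verification the authors leave to the reader.
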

\noindent Thus if $B \in \partial A^\downarrow$, then
\begin{equation}
A - B = (\bar \rho(B) - \bar \rho(A))^{1/2} \otimes (\bar \rho(B) - \bar \rho(A))^{1/2},
\end{equation}
where
\begin{equation*}
\bar \rho(M(a,b,c)) := (a,b),
\end{equation*}
and $v^{1/2}$ denotes the complex square root of a vector $v \in \R^2 = \C$.

Denoting by $I$ the $2\times 2$ identity matrix, we write
	\[ A^- = A - 2 (\trace(A) - 2) I \]
for the reflection of $A$ across the trace-$2$ plane; and
	\[ A^0 = \frac{A + A^-}{2} \]
for the projection of $A$ on the trace-$2$ plane. Since the line $\{A+t(v\otimes v) \mid t\in \R\}$ is tangent to the downward cone $A^\downarrow$ for every nonzero vector $v$ and matrix $A$, we see that matrices $A_1,A_2$, both with trace greater than $2$, are externally tangent if and only if $A_1-A_2^-$ has rank 1 and internally tangent if and only if $A_1-A_2$ has rank 1. This gives the following Observation:
\begin{observation}
Suppose the matrices $A_i,A_j,A_k$ are mutually externally tangent and have traces $>2$.  Then there are at most two matrices $B$ whose difference $A_s - B$ is rank 1 for each $s=i,j,k$: $B=A_m^-$ is a solution for any matrix $A_m$ externally tangent to $A_i,A_j,A_k$, and $B=A_m$ is a solution for any $A_m$ internally tangent to $A_i,A_j,A_k$.\qed
\label{o.succ}
\end{observation}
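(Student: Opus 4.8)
The plan is to work entirely in the coordinates $B = M(x,y,z)$ on $S_2 \cong \R^3$ and to exploit the fact that the condition ``$A_s - B$ has rank $1$'' is, away from the excluded rank-$0$ locus, just $\det(A_s - B) = 0$. Since $M$ is linear and $\det M(p,q,r) = \frac14(r^2 - p^2 - q^2)$, writing $A_s = M(a_s,b_s,c_s)$ gives
\[
\det(A_s - B) = \tfrac14\bigl((c_s - z)^2 - (a_s - x)^2 - (b_s - y)^2\bigr),
\]
so the locus $K_s := \{B : \det(A_s - B) = 0\}$ is a right circular (``$45^\circ$'') cone with apex $A_s$ and vertical axis. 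The matrices $B$ we seek form a subset of $K_i \cap K_j \cap K_k$, so it suffices to bound the number of points common to three such cones.

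The key observation is that all three cones share the same quadratic part $\frac14(z^2 - x^2 - y^2)$. Hence for each pair the difference of the defining polynomials is \emph{affine}, cutting out a plane $P_{st} := \{\det(A_s - B) = \det(A_t - B)\}$ whose normal is proportional to $(a_s - a_t,\, b_s - b_t,\, c_t - c_s)$. Any common solution lies on $K_i$ and on both $P_{ij}$ and $P_{jk}$. First I would check that $P_{ij}$ and $P_{jk}$ meet in a line $L$ rather than coincide or run parallel: their normals have horizontal projections $(a_i - a_j, b_i - b_j)$ and $(a_j - a_k, b_j - b_k)$, which are linearly independent precisely because the centers $(a_s,b_s)$ are not collinear --- and three mutually externally tangent circles cannot have collinear centers, since for collinear centers the distance between the outermost pair would exceed the sum of their radii by $2r_{\text{mid}} > 0$. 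Thus $K_i \cap K_j \cap K_k \subseteq L \cap K_i$.

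It then remains to see that $L$ meets $K_i$ in at most two points, i.e.\ that $L$ is not contained in $K_i$. Here I would use that the quadratic form $Q(u,v,w) = w^2 - u^2 - v^2$ is nondegenerate of Lorentzian signature, so every line lying in the cone $\{Q(\,\cdot - A_i) = 0\}$ must pass through the apex $A_i$ (two $Q$-null vectors that are $Q$-orthogonal are parallel). But $A_i \notin L$: indeed $A_i \in P_{ij}$ would force $\det(A_i - A_j) = 0$, i.e.\ that $A_i, A_j$ are internally tangent or equal, contradicting that they are externally tangent distinct circles. Hence $L \not\subseteq K_i$, a line meets the cone in at most two points, and the solution set has size at most two.

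Finally, the explicit description of the (at most two) solutions is immediate from the tangency criterion recorded just before the Observation: if $A_m$ is externally tangent to each of $A_i, A_j, A_k$ then $A_s - A_m^-$ has rank $1$ for every $s$, so $B = A_m^-$ solves the system; and if $A_m$ is internally tangent to all three then $A_s - A_m$ has rank $1$, so $B = A_m$ solves it. (These two possibilities correspond to the inner and outer Soddy circles of the triple.) I expect the main obstacle to be the non-degeneracy bookkeeping of the previous two paragraphs --- confirming that $L$ is a genuine line transverse to the cone --- rather than any of the algebra; once the Lorentzian ``orthogonal nulls are parallel'' fact and the exclusion of collinear centers are in hand, the count of two falls out immediately.
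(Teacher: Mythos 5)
Your proof is correct, and it takes a genuinely more self-contained route than the paper for the counting step. The paper disposes of Observation~\ref{o.succ} in one breath: the sentence preceding it translates ``$A_s-B$ has rank $1$'' into external/internal tangency of the corresponding circles, and the bound of two is then the classical fact (recalled later, in Section~\ref{s.packings}) that three mutually tangent circles admit exactly two Soddy circles. You instead prove the bound directly in matrix space: since $\det M(p,q,r)=\tfrac14(r^2-p^2-q^2)$, the three loci $\{\det(A_s-B)=0\}$ are translates of a single Lorentzian light cone, so pairwise differences of their defining polynomials are affine, the solution set lies on a line $L$ (non-collinearity of the centers of externally tangent circles gives transversality of the two planes), and $L$ meets a cone in at most two points unless it is a ruling through the apex --- excluded because $\det(A_i-A_j)=-r_ir_j<0$ for externally tangent circles. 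All the non-degeneracy checks you flag do go through (orthogonal null vectors in signature $(1,2)$ are parallel; the apex is not on $P_{ij}$), and your identification of the two solutions as $A_m^-$ and $A_m$ matches the paper's tangency criterion verbatim. What your approach buys is independence from the Soddy-circle existence/uniqueness fact, which the paper only states without proof and only in the later section; what the paper's approach buys is brevity and the geometric interpretation that makes the follow-up remark about the degenerate (common tangent line) case transparent. The two are of course dual pictures of the same Lorentzian model of circle geometry.
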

\noindent Note that the case of fewer than two solutions occurs when the triple of trace-2 circles of the down-set cones of the $A_i$ are tangent to a common line, leaving only one proper circle tangent to the triple.

\subsection{Viscosity Solutions}
\label{s.viscosity}

We would like to interpret the sandpile PDE $D^2 u \in \partial \Gamma$ in the classical sense, but the nonlinear structure of $\partial \Gamma$ makes this impractical.  Instead, we must adopt a suitable notion of weak solution, which for us is the {\em viscosity} solution. The theory of viscosity solutions is quite rich and we refer the interested reader to \cites{Crandall, Crandall-Ishii-Lions} for an introduction. Here we simply give the basic definitions.  We remark that these definitions and results make sense for any non-trivial subset $\Gamma \subseteq S_2$ that is downward closed and whose boundary has bounded trace (see Facts \ref{f.cl}, \ref{f.l2}, and \ref{f.g3} below).

If $\Omega \subseteq \R^2$ is an open set and $u \in C(\Omega)$, we say that $u$ satisfies the differential inclusion
\begin{equation}
\label{supersol}
D^2 u \in \bar \Gamma \quad \mbox{in } \Omega,
\end{equation}
if $D^2 \varphi(x) \in \bar \Gamma$ whenever $\varphi \in C^\infty(\Omega)$ touches $u$ from below at $x \in \Omega$.  Letting $\Gamma^c$ denote the closure of the complement of $\Gamma$, we say that $u$ satisfies
\begin{equation}
\label{subsol}
D^2 u \in \Gamma^c \quad \mbox{in } \Omega,
\end{equation}
if $D^2 \psi(x) \in \Gamma^c$ whenever $\psi \in C^\infty(\Omega)$ touches $u$ from above at $x \in \Omega$. Finally, we say that $u$ satisfies
\begin{equation*}
D^2 u \in \partial \Gamma \quad \mbox{in } \Omega,
\end{equation*}
if it satisfies both \eqref{supersol} and \eqref{subsol}.

The standard machinery for viscosity solutions gives existence, uniqueness, and stability of solutions. For example, the minimum in \eqref{obstacle} is indeed attained by some $v \in C(\R^2)$ and we have a comparison principle:

\begin{proposition}
\label{p.visc}
If $\Omega \subseteq \R^2$ is open and bounded and $u, v \in C(\bar \Omega)$ satisfy
\begin{equation*}
D^2 u \in \bar \Gamma \quad \mbox{and} \quad D^2 v \in \Gamma^c \quad \mbox{in } \Omega,
\end{equation*}
then $\sup_\Omega (v - u) = \sup_{\partial \Omega} (v - u)$.\qed
\end{proposition}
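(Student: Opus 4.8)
The plan is to prove the statement by contradiction using the \emph{doubling of variables} technique of viscosity solution theory \cite{Crandall-Ishii-Lions}; the only nonstandard ingredient is that the order structure of $\bar\Gamma$ plays the role usually played by degenerate ellipticity. By continuity $\sup_{\bar\Omega}(v-u)=\max\{\sup_\Omega(v-u),\ \sup_{\partial\Omega}(v-u)\}$, and $\sup_\Omega(v-u)\ge\sup_{\partial\Omega}(v-u)$ is immediate, so it suffices to rule out $\sup_\Omega(v-u)>\sup_{\partial\Omega}(v-u)$. Suppose this holds. The obstruction to a direct argument is that a subsolution and supersolution of a second-order inclusion with no zeroth-order term produce, at a common maximum, Hessians $X\le Y$ with $X\in\Gamma^c$ and $Y\in\bar\Gamma$, which by downward-closedness forces only $X\in\partial\Gamma$ rather than a contradiction. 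I therefore first replace $v$ by the \emph{strict} subsolution $v^\ep:=v+\tfrac{\ep}{2}|x|^2$. Using that $\bar\Gamma$ is downward closed (Fact~\ref{f.cl}) and that $\partial\Gamma$ has bounded trace (Fact~\ref{f.g3}), I would check that whenever $X\in\Gamma^c$ the perturbed matrix $X+\ep I$ lies in the open complement $S_2\setminus\bar\Gamma$, at distance from $\bar\Gamma$ bounded below on compact sets of such $X$: if $X+\ep I$ were a boundary point with $A-(X+\ep I)$ of rank $\le 1$ for a peak $A\in\ppp$, then $A-X\ge\ep I$ would place $X$ strictly inside $A^\downarrow\subset\mathrm{int}\,\Gamma$, contradicting $X\in\Gamma^c$. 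Hence the second-order superjets of $v^\ep$ lie uniformly outside $\bar\Gamma$. Since $v^\ep\to v$ uniformly on $\bar\Omega$, for all small $\ep$ we still have $\sup_\Omega(v^\ep-u)>\sup_{\partial\Omega}(v^\ep-u)$, so it is enough to derive a contradiction for the pair $(v^\ep,u)$.

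Next I would carry out the doubling. Fix a small $\ep$, and for $\alpha>0$ let $(x_\alpha,y_\alpha)$ maximize $\Theta_\alpha(x,y):=v^\ep(x)-u(y)-\tfrac{\alpha}{2}|x-y|^2$ over $\bar\Omega\times\bar\Omega$. The standard penalization estimates give $\alpha|x_\alpha-y_\alpha|^2\to0$ and convergence of $x_\alpha,y_\alpha$ to a maximizer of $v^\ep-u$; since the interior supremum strictly exceeds the boundary one, for large $\alpha$ both $x_\alpha$ and $y_\alpha$ lie in the interior $\Omega$. The Crandall--Ishii theorem on sums \cite{Crandall-Ishii-Lions} then furnishes symmetric matrices $X,Y$, with $X$ in the closure of the second-order superjet of $v^\ep$ at $x_\alpha$ and $Y$ in the closure of the second-order subjet of $u$ at $y_\alpha$, satisfying the matrix inequality $\left(\begin{smallmatrix}X&0\\0&-Y\end{smallmatrix}\right)\le 3\alpha\left(\begin{smallmatrix}I&-I\\-I&I\end{smallmatrix}\right)$; testing this against vectors $(\xi,\xi)$ yields $X\le Y$.

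Finally I would extract the contradiction from the order structure. Because $v^\ep$ is a strict subsolution, its superjet Hessian satisfies $X\in S_2\setminus\bar\Gamma$; because $u$ is a supersolution, $Y\in\bar\Gamma$. But $\bar\Gamma$ is downward closed, so $X\le Y$ and $Y\in\bar\Gamma$ force $X\in\bar\Gamma$, contradicting $X\notin\bar\Gamma$. This shows $\sup_\Omega(v^\ep-u)\le\sup_{\partial\Omega}(v^\ep-u)$ for each small $\ep$, and letting $\ep\to0$ gives $\sup_\Omega(v-u)\le\sup_{\partial\Omega}(v-u)$, hence equality.

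The routine parts are the penalization estimates and the invocation of the theorem on sums. The step I expect to be the main obstacle, and the one genuinely special to this problem, is the passage to a strict subsolution: one must use both the downward-closedness and the bounded-trace hypotheses on $\partial\Gamma$ to guarantee that $v^\ep$ has Hessians lying strictly—and uniformly, on the compact set of matrices produced by the theorem on sums—outside $\bar\Gamma$, since without strictness the inequality $X\le Y$ only places $X$ on $\partial\Gamma$ and yields no contradiction.
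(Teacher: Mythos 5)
Your proof is correct, and it supplies exactly the standard doubling-of-variables/theorem-on-sums argument that the paper omits (Proposition~\ref{p.visc} is stated with a \qed and attributed to the ``standard machinery'' of \cite{Crandall-Ishii-Lions}); you also correctly isolate the one problem-specific point, namely that downward-closedness of $\bar\Gamma$ makes $X\mapsto X+\ep I$ a strict perturbation, so that $X\in\Gamma^c$ forces $X+\ep I\notin\bar\Gamma$ while $X\le Y\in\bar\Gamma$ forces $X\in\bar\Gamma$. Two cosmetic remarks: the uniform lower bound on the distance from the perturbed Hessians to $\bar\Gamma$ is not actually needed (the pointwise exclusion $X+\ep I\notin\bar\Gamma$ already yields the contradiction), and the phrase ``$A^\downarrow\subset\mathrm{int}\,\Gamma$'' should read that $X\le A-\ep I$ places $X$ in the \emph{interior} of $A^\downarrow$, hence in $\mathrm{int}\,\bar\Gamma\subseteq\Gamma$ and away from $\Gamma^c$.
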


Recall that $C^{1,1}(U)$ is the class of differentiable functions on $U$ with Lipschitz derivatives.  In Section \ref{s.pq}, we construct piecewise quadratic $C^{1,1}$ functions which solve the sandpile PDE on each piece.  The following standard fact guarantees that the functions we construct are, in fact, viscosity solutions of the sandpile PDE on the whole domain (including at the interfaces of the pieces).

\begin{proposition}
\label{p.classical}
If $U\sbs\R^2$ is open, $u\in C^{1,1}(U)$, and for Lebesgue almost every $x\in U$
\[
D^2u(x)\mbox{ exists and } D^2u(x)\in \partial \Gamma,
\]
then $D^2u\in \partial \Gamma$ holds in the viscosity sense.\qed
\end{proposition}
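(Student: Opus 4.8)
The plan is to obtain both inclusions comprising $D^2u\in\partial\Gamma$ from the almost-everywhere hypothesis by the standard perturbation argument built on Jensen's lemma, exploiting that $u\in C^{1,1}(U)$ is simultaneously semiconvex and semiconcave. Concretely, $Du$ is Lipschitz, so the set $G\subseteq U$ on which $D^2u$ exists (with a genuine second-order Taylor expansion) has full measure, and by hypothesis we may assume in addition that $D^2u(x)\in\partial\Gamma$ for every $x\in G$. Two order-theoretic facts about the downward-closed set $\Gamma$ will complete the passage from $G$ to arbitrary contact points. First, since $\bar\Gamma$ is downward closed, $B\le A\in\bar\Gamma$ forces $B\in\bar\Gamma$. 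Second, if $A\in\partial\Gamma$ and $B\ge A$ then $B\in\Gamma^c$: for any $s>0$ one has $A+sI\notin\Gamma$ — otherwise downward closure would place the entire ball $\{A+N:\|N\|\le s\}$ inside $\Gamma$ and make $A$ interior — whence $B+sI\ge A+sI$ and downward closure give $B+sI\notin\Gamma$; letting $s\to0^+$ yields $B\in\overline{S_2\setminus\Gamma}=\Gamma^c$.

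For the subsolution inclusion \eqref{subsol}, let $\psi\in C^\infty(U)$ touch $u$ from above at $x_0$. Replacing $\psi$ by $\psi+|x-x_0|^4$, which leaves $D^2\psi(x_0)$ unchanged, we may assume $x_0$ is a \emph{strict} local maximum of $g:=u-\psi$, a function that is semiconvex because $u\in C^{1,1}$. Jensen's lemma \cite{Crandall-Ishii-Lions} then supplies, for each small $\delta>0$, a set $A_\delta\subseteq B_\delta(x_0)$ of positive Lebesgue measure such that for each $y\in A_\delta$ the map $x\mapsto g(x)-p\cdot x$ has a local maximum at $y$ for some $|p|\le\delta$. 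Since $|A_\delta|>0$ while $G$ has full measure, we may choose $y_\delta\in A_\delta\cap G$; at such a point $g$ is twice differentiable and the perturbation $g-p\cdot x$ attains a local maximum, so the second-order condition gives $D^2g(y_\delta)\le0$, i.e.\ $D^2\psi(y_\delta)\ge D^2u(y_\delta)$. As $y_\delta\in G$ we have $D^2u(y_\delta)\in\partial\Gamma$, so the second fact yields $D^2\psi(y_\delta)\in\Gamma^c$. Finally $y_\delta\to x_0$ as $\delta\to0$, and since $D^2\psi$ is continuous and $\Gamma^c$ is closed we conclude $D^2\psi(x_0)\in\Gamma^c$.

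The supersolution inclusion \eqref{supersol} is proved symmetrically: after replacing $\varphi$ by $\varphi-|x-x_0|^4$ we make $x_0$ a strict local maximum of the semiconvex function $g:=\varphi-u$, apply Jensen's lemma to obtain points $y_\delta\in G$ near $x_0$ with $D^2g(y_\delta)\le0$, i.e.\ $D^2\varphi(y_\delta)\le D^2u(y_\delta)\in\partial\Gamma\subseteq\bar\Gamma$, and invoke the first fact together with the closedness of $\bar\Gamma$ to pass to the limit $D^2\varphi(x_0)\in\bar\Gamma$. The entire argument is routine bookkeeping once Jensen's lemma is granted; that lemma — the assertion that the perturbed contact points fill a set of positive measure, so that they must meet the full-measure good set $G$ — is the only genuinely non-formal ingredient, and it is precisely the semiconvexity/semiconcavity furnished by the $C^{1,1}$ hypothesis (which also supplies the pointwise second-order expansions on $G$) that makes it available.
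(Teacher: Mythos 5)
Your proof is correct and follows essentially the same route as the paper's: both rest on the observation that the contact points of suitably perturbed test functions form a set of positive Lebesgue measure, which must therefore meet the full-measure set where $D^2u$ exists and lies in $\partial\Gamma$, after which downward-closedness of $\Gamma$ transfers the inclusion to the test function. The only differences are cosmetic: the paper proves the positive-measure statement directly in this $C^{1,1}$ setting (perturbing a quadratic test function by $-\tfrac{\ep}{2}|x|^2+p\cdot x+q$ and using that the contact-point-to-slope map is Lipschitz) rather than invoking Jensen's lemma as a black box, and it writes out only the supersolution inclusion, leaving the symmetric subsolution case---including the order-theoretic fact that $B\ge A\in\partial\Gamma$ forces $B\in\Gamma^c$, which you verify explicitly---to the reader.
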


\noindent Since we are unable to find a published proof, we include one here.

\begin{proof}
Suppose $\varphi \in C^\infty(U)$ touches $u$ from below at $x_0 \in U$. We must show $D^2 \varphi(x_0) \in \bar \Gamma$.  By approximation, we may assume that $\varphi$ is a quadratic polynomial. Fix a small $\ep > 0$. Let $A$ be the set of $y \in U$ for which there exists $p \in \R^2$ and $q \in \R$ such that
\begin{equation*}
\varphi_y(x) := \varphi(x) - \frac{1}{2} \ep |x|^2 + p \cdot x + q,
\end{equation*}
touches $u$ from below a $y$. Since $u \in C^{1,1}$, $p(y)$ is unique and that map $p : A \to \R^2$ is Lipschitz. Since $\ep > 0$ and $U$ is open, the image $p(A)$ contains a small ball $B_\delta(0)$. Thus we have
\begin{equation*}
0 < |B_\delta(0)| \leq |p(A)| \leq Lip(p) |A|. 
\end{equation*}
In particular, $A$ has positive Lebesgue measure and we may select a point $y \in A$ such that $D^2 u(y)$ exists and $D^2 u(y) \in \bar \Gamma$. Since $\varphi_h$ touches $u$ from below at $y$, we have $D^2 \varphi_y(y) \leq D^2 u(y)$ and thus $D^2 \varphi_y(y) = D^2 \varphi(y) - \ep I = D^2 \varphi(x_0) - \ep I \in \bar \Gamma$.  Sending $\ep \to 0$, we obtain $D^2 \varphi(x_0) \in \bar \Gamma$.
\end{proof}

\section{Algorithm to decide membership in \texorpdfstring{$\Gamma$}{Gamma}}
\label{s.gamma}

\noindent \emph{A priori}, the definition of $\Gamma$ does not give a method for verifying membership in the set.
In this section, we will show that matrices in $\Gamma$ correspond to certain infinite stabilizable sandpiles on $\Z^2$.  If $A \in \Gamma$ has rational entries, then its associated sandpile is periodic, which yields a method for checking membership in $\Gamma$ for any rational matrix, and allows us to algorithmically determine the height of the boundary of $\Gamma$ at any point with arbitrary precision.  Although restricting our attention in this section to the lattice $\Z^2$ simplifies notation a bit, we note that this algorithm generalizes past $\Z^2$, to allow the numerical computation of sets analogous to $\Gamma$ for other doubly periodic graphs in the plane, for which we have no exact characterizations (see Figure~\ref{f.gammatri}, for example).

If $q: \Z^2 \to \R$, write $\clg{q}$ for the function $\Z^2 \to \Z$ obtained by rounding each value of $q$ up to the nearest integer. The principal lemma is the following.

\begin{lemma}
\label{l.stab}
$A \in \Gamma$ if and only if the configuration $\Delta^1 \clg{q_A}$ is stabilizable, where
\begin{equation*}
q_A(x) := \frac{1}{2} x^t A x
\end{equation*}
is the quadratic form associated to $A$.
\end{lemma}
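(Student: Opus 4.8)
The plan is to trade the definition of $\Gamma$ for a statement about sandpiles by means of the single substitution $u = \clg{q_A} + w$, and then to quote the Least Action Principle. Write $g := \clg{q_A} : \Z^2 \to \Z$ and $\eta := \Delta^1 g$ for the configuration in the statement. The starting observation is that $g$ is by definition the pointwise-least integer-valued function with $g \geq q_A$, so for \emph{integer-valued} $u$ the condition $u \geq q_A$ is equivalent to $u \geq g$. Consequently $A \in \Gamma$ — the existence of an integer $u$ with $u \geq q_A$ and $\Delta^1 u \leq 3$ — is equivalent to the existence of an integer $u$ with $u \geq g$ and $\Delta^1 u \leq 3$. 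Setting $w := u - g$, such a $u$ exists if and only if there is a function $w : \Z^2 \to \N$ with $\eta + \Delta^1 w = \Delta^1 u \leq 3$. Thus the lemma reduces to the purely sandpile statement: \emph{there exists $w : \Z^2 \to \N$ with $\eta + \Delta^1 w \leq 3$ if and only if $\eta$ is stabilizable.}

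For the forward direction of this reduced statement, I would associate to a given $w$ the toppling sequence that topples each vertex $x$ exactly $w(x)$ times (in any admissible order). Its terminal configuration is $\eta + \Delta^1 w \leq 3$, so this is a stabilizing sequence, and Proposition \ref{p.legstab} then forces every legal toppling sequence to topple each vertex $x$ at most $w(x)$ times. A legal sequence is therefore dominated pointwise by $w$ and hence stabilizes, producing a legal stabilizing sequence; this is exactly the content of the Least Action Principle, Proposition \ref{p.la}. For the converse, if $\eta$ is stabilizable I would simply take $w := v$, the odometer of $\eta$: it is nonnegative, and its terminal configuration $\eta + \Delta^1 v$ is stable, i.e.\ $\eta + \Delta^1 v \leq 3$, as required.

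The one point that needs a word of care — and the only place where this is more than bookkeeping — is that $\eta = \Delta^1 g$ need not be nonnegative, whereas Proposition \ref{p.la} is phrased for configurations $\eta : \Z^2 \to \N$. This is not a genuine obstacle: the deduction above invokes only Proposition \ref{p.legstab}, which is stated for arbitrary integer configurations, together with the fact that $w$ furnishes a stabilizing sequence for $\eta$ — precisely the route indicated in the remark following Proposition \ref{p.la}. The remaining verifications (that $g$ really is the least integer majorant of $q_A$, and that a legal process bounded pointwise by $w$ reaches a stable configuration) are routine.
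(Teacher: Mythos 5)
Your proof is correct and follows essentially the same route as the paper's: the substitution $w = u - \clg{q_A}$ together with the Least Action Principle in one direction, and $u = v + \clg{q_A}$ with the odometer in the other. Your extra remark about $\eta$ possibly taking negative values (so that Proposition~\ref{p.la} must really be run through Proposition~\ref{p.legstab}) is a legitimate point of care that the paper's one-line proof glosses over, but it does not change the argument.
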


\begin{proof}
If $u$ satisfies \eqref{l.gamma}, then the Least Action Principle applied to $w = u - \clg{q_A}$ shows that $\eta = \Delta^1 \clg{q_A}$ is stabilizable.  On the other hand, if $\eta=\Delta^1\clg{q_A}$ is stabilizable with odometer $v$, then $u = v + \clg{q_A}$ satisfies \eqref{l.gamma}.
\end{proof}

Since $A\leq B$ implies $x^t A x \leq x^t Bx$ for all $x \in \Z^2$, the definition of $\Gamma$ implies that $\Gamma$ is downward closed in the matrix order:

\begin{fact}
\label{f.cl}
If $A\leq B$ and $B \in \Gamma$, then $A\in \Gamma$.\qed
\end{fact}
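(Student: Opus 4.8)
The plan is to prove Fact~\ref{f.cl} by reusing the witness function. Recall that by the definition \eqref{l.gamma}, membership $B \in \Gamma$ means there exists $u : \Z^2 \to \Z$ with $u(x) \geq \frac{1}{2} x^t B x$ and $\Delta^1 u(x) \leq 3$ for every $x \in \Z^2$. My claim is that this same $u$ already certifies $A \in \Gamma$ when $A \leq B$, so no new function needs to be constructed.

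First I would unpack the hypothesis $A \leq B$. By the matrix ordering, $A \leq B$ means $B - A$ is nonnegative definite, hence $x^t (B - A) x \geq 0$, equivalently $x^t A x \leq x^t B x$, for \emph{every} $x \in \R^2$. In particular this inequality holds at all lattice points $x \in \Z^2$, which is the only place the constraint is tested. Dividing by $2$ and chaining with the lower bound for $u$ gives
\[
\tfrac{1}{2} x^t A x \;\leq\; \tfrac{1}{2} x^t B x \;\leq\; u(x)
\qquad\text{for all } x \in \Z^2 .
\]
Thus $u$ satisfies the first requirement of \eqref{l.gamma} with $A$ in place of $B$.

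The second requirement, $\Delta^1 u(x) \leq 3$, is a property of $u$ alone and does not reference the matrix at all, so it is inherited unchanged. Since the identical $u$ fulfills both conditions of \eqref{l.gamma} for $A$, we conclude $A \in \Gamma$, establishing downward closure. I do not expect any genuine obstacle here: the fact is immediate once one observes that enlarging the matrix in the order only raises the quadratic obstacle $\frac{1}{2} x^t A x$, so shrinking it keeps the obstacle beneath any function that already dominated the larger one. The only point worth stating carefully is that the quadratic inequality is derived from nonnegative definiteness over $\R^2$ and then specialized to $\Z^2$.
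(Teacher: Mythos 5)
Your argument is correct and is essentially the paper's own: the text preceding Fact~\ref{f.cl} observes that $A \leq B$ implies $x^t A x \leq x^t B x$ for all $x \in \Z^2$, so the same witness $u$ from \eqref{l.gamma} for $B$ also works for $A$. Nothing further is needed.
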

\noindent It follows that the boundary of $\Gamma$ is Lipschitz, and in particular, continuous; thus to determine the structure of $\Gamma$, it suffices to characterize the rational matrices in $\Gamma$.  We will say that a function $s$ on $\Z^2$ is \emph{$n$-periodic} if $s(x+y)=s(x)$ for all $y \in n\Z^2$.

\begin{lemma}
  If $A$ has entries in $\frac1n \Z$ for a positive integer $n$, then  $\Delta^1{\clg{q_A}}$ is $2n$-periodic.  
\label{l.per}
\end{lemma}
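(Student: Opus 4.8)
The plan is to reduce the $2n$-periodicity of $\Delta^1\clg{q_A}$ to three elementary observations: that translating $q_A$ by a vector $y$ changes it by an affine function $\ell_y$ of $x$; that $\ell_y$ is integer-valued on $\Z^2$ precisely when $y\in 2n\Z^2$, so that it can be pulled out of the ceiling; and that the discrete Laplacian $\Delta^1$ annihilates affine functions.

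First I would record the exact algebraic identity obtained by expanding the quadratic form and using the symmetry of $A$: for every $x,y\in\Z^2$,
\[
q_A(x+y)-q_A(x) = (Ay)\cdot x + q_A(y).
\]
Write $\ell_y(x):=(Ay)\cdot x + q_A(y)$ for this affine function of $x$.

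Next I would verify that $\ell_y$ takes integer values on $\Z^2$ when $y\in 2n\Z^2$. Since $A$ has entries in $\tfrac1n\Z$ and $y\in 2n\Z^2$, each entry of $Ay$ lies in $2\Z$, so $Ay\in\Z^2$ and hence $(Ay)\cdot x\in\Z$ for all $x\in\Z^2$; moreover $y^tAy\in 4n\Z\subseteq 2\Z$, so the constant term $q_A(y)=\tfrac12 y^tAy$ is also an integer. This integrality is the only place the factor $2$ in ``$2n$-periodic'' is used, and it is the sole point that requires care: with merely $y\in n\Z^2$ one would only get $q_A(y)\in\tfrac n2\Z$, which fails to be an integer for odd $n$. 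Because $\ell_y(x)\in\Z$, it commutes with rounding up, so from the identity above,
\[
\clg{q_A}(x+y) = \clg{q_A(x)+\ell_y(x)} = \clg{q_A}(x) + \ell_y(x).
\]

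Finally I would apply $\Delta^1$. Viewing $x\mapsto\clg{q_A}(x+y)$ as a function of $x$, its discrete Laplacian at $x$ equals $\Delta^1\clg{q_A}(x+y)$, since the neighbors of $x+y$ are exactly the $y$-translates of the neighbors of $x$. Combining this with the displayed identity and the linearity of $\Delta^1$ gives
\[
\Delta^1\clg{q_A}(x+y) = \Delta^1\clg{q_A}(x) + \Delta^1\ell_y(x).
\]
The last term vanishes because $\Delta^1$ kills affine functions: for affine $\ell$ one has $\Delta^1\ell(x)=\sum_{z\sim x}(\ell(z)-\ell(x)) = (Ay)\cdot\sum_{z\sim x}(z-x)=0$, as the four unit offsets sum to zero. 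Hence $\Delta^1\clg{q_A}(x+y)=\Delta^1\clg{q_A}(x)$ for every $y\in 2n\Z^2$, which is exactly the claimed $2n$-periodicity. The computation is entirely routine once the integrality of $q_A(y)$ is in hand, and that integrality is what forces the period $2n$ rather than $n$.
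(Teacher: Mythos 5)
Your proof is correct and follows essentially the same route as the paper: both arguments rest on the observation that for $y \in 2n\Z^2$ the increment $q_A(x+y)-q_A(x)$ is an integer-valued affine function of $x$ (which is where the factor $2$ is needed), so it passes through the ceiling and is annihilated by $\Delta^1$. The paper merely packages this as ``$\clg{q_A}-q_A$ is $2n$-periodic and $\Delta^1 q_A$ is constant,'' which is the same computation in different clothing.
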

\begin{proof}
If $y \in 2n\Z^2$ then $Ay \in 2\Z^2$, so 
	\[ q_A(x+y) - q_A(x) = (x^t +  \frac12 y^t)Ay  \in \Z. \]  
Hence $\clg{q_A}-q_A$ is $2n$-periodic. Writing
\[
\Delta^1\clg{q_A}=\Delta^1 (\clg{q_A}-q_A) - \Delta^1 q_A
\]
and noting that $\Delta^1 q_A$ is constant, we conclude that $\Delta^1\clg{q_A}$ is $2n$-periodic.
\end{proof}

\noindent Thus the following lemma will allow us to make the crucial connection between rational matrices in $\Gamma$ stabilizable sandpiles on finite graphs.  It can be proved by appealing to \cite[Theorem~2.8]{Fey-Meester-Redig} on infinite toppling procedures, but we give a self-contained proof.

\begin{lemma}
\label{torus}
An $n$-periodic configuration $\eta : \Z^2 \to \Z$ is stabilizable if and only if it is stabilizable on the torus $T_n=\Z^2 / n \Z^2$.
\end{lemma}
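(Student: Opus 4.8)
The plan is to prove both directions of the equivalence by relating toppling on $\Z^2$ to toppling on the quotient torus $T_n = \Z^2/n\Z^2$, using the periodicity of $\eta$ to transfer toppling sequences back and forth. The key observation is that because $\eta$ is $n$-periodic, any configuration arising from it can be taken $n$-periodic as well, so that toppling data on $\Z^2$ projects to toppling data on $T_n$ and vice versa, with the discrete Laplacian commuting with the projection $\pi : \Z^2 \to T_n$.

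First I would prove the easy direction: if $\eta$ is stabilizable on $\Z^2$ with odometer $v$, I want to show $\eta$ is stabilizable on $T_n$. The natural candidate for a stabilizing function on $T_n$ is obtained by summing $v$ over each fiber of $\pi$, i.e. setting $\bar v(\bar x) := \sum_{y \in \pi^{-1}(\bar x)} v(y)$ — but this sum may be infinite, so instead I would symmetrize more carefully or, better, invoke the Least Action Principle on the torus (Proposition~\ref{p.la}, which the excerpt notes holds on $T_n$). Concretely, I would argue that the $\Z^2$-odometer $v$ is itself $n$-periodic: since $\eta$ is $n$-periodic and the odometer is characterized as a pointwise minimum independent of toppling order, translating by $n\Z^2$ maps legal stabilizing sequences to legal stabilizing sequences, forcing $v(x+y) = v(x)$ for $y \in n\Z^2$. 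A periodic $v$ descends to a well-defined $\bar v$ on $T_n$ with $\eta + \Delta^1 \bar v = \overline{\eta + \Delta^1 v} \leq 3$, and by the torus Least Action Principle this certifies stabilizability on $T_n$.

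For the converse, suppose $\eta$ is stabilizable on $T_n$ with odometer $\bar v$. I would lift $\bar v$ to the $n$-periodic function $w := \bar v \circ \pi$ on $\Z^2$. Because $\Delta^1$ is local and commutes with $\pi$, the stabilization identity $\eta + \Delta^1 \bar v \leq 3$ on $T_n$ pulls back to $\eta + \Delta^1 w \leq 3$ on $\Z^2$ (here I use that $\eta$ is already $n$-periodic so its lift agrees with itself). Now $w : \Z^2 \to \N$ satisfies exactly the hypothesis of the Least Action Principle (Proposition~\ref{p.la}), which immediately gives that $\eta$ is stabilizable on $\Z^2$. This direction is clean precisely because the Least Action Principle converts the existence of \emph{any} superharmonic-type barrier $w$ into genuine stabilizability without needing to exhibit a legal sequence by hand.

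The main obstacle I anticipate is the first (forward) direction, specifically the claim that the $\Z^2$-odometer is $n$-periodic and that it descends sensibly to the torus. The subtlety is that on $\Z^2$ the odometer $v$ need not vanish anywhere and the fiberwise sum genuinely diverges, so one cannot naively push $v$ forward; the correct move is to establish periodicity of $v$ first (via the order-independence in Proposition~\ref{p.legstab}) and then descend the periodic representative, rather than to sum over fibers. Verifying that $\eta + \Delta^1 \bar v \le 3$ holds \emph{as a configuration on the torus} — i.e. that no toppling-count information is lost when identifying fibers — is the step requiring the most care, but it follows once periodicity of $v$ is in hand since then the torus Laplacian of the descended function literally equals the descended value of the $\Z^2$ Laplacian.
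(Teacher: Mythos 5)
Your proof is correct, and the direction from the torus to $\Z^2$ (lift the torus odometer periodically and invoke the Least Action Principle) is exactly the paper's argument. Where you genuinely diverge is the forward direction. You establish that the $\Z^2$-odometer $v$ of an $n$-periodic stabilizable configuration is itself $n$-periodic, by translating a legal stabilizing sequence by $y \in n\Z^2$ and appealing to the order-independence of toppling counts (Proposition~\ref{p.legstab}) to conclude $v(x+y)=v(x)$; the periodic $v$ then descends to a certificate on $T_n$. The paper instead never touches the odometer on $\Z^2$: it takes an arbitrary $w:\Z^2\to\N$ with $\eta+\Delta^1 w\le 3$ and periodizes it by setting $\tilde w(x):=\min\{w(x+y): y\in n\Z^2\}$, using the monotonicity of the discrete Laplacian under pointwise minima (Proposition~\ref{p.minD}) to see that $\tilde w$ still stabilizes $\eta$. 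Your route is conceptually cleaner (``the odometer inherits the symmetries of the configuration'') but leans on the Abelian property holding for infinite configurations on $\Z^2$, i.e.\ on the odometer being well-defined there; the paper's min-trick is more elementary, needing only the existence of \emph{some} stabilizing function rather than uniqueness of toppling counts, and so transfers more readily to settings where order-independence is less immediate. Both arguments are complete as written; just make sure you justify that the translated sequence is legal \emph{and} stabilizing for the same $\eta$ (which is where $n$-periodicity of $\eta$ enters) before concluding equality of the toppling counts.
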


\begin{proof}
Supposing $\eta$ is stabilizable on the torus $T_n$ with odometer $\bar v$, and extending $\bar v$ to an $n$-periodic function $v$ on $\Z^2$ in the natural way, we have that $\eta + \Delta^1 v \leq 3$. Thus $\eta$ is stabilizable on $\Z^2$ by the Least Action Principle.

Conversely, if $\eta$ is stabilizable on $\Z^2$, then there is a function $w : \Z^2 \to \N$ such that $\eta + \Delta^1 w \leq 3$. Proposition \ref{p.minD} implies that
\begin{equation*}
\tilde w(x) := \min \{ w(x + y) : y \in n \Z^2 \},
\end{equation*}
also satisfies $\eta + \Delta^1 \tilde w \leq 3$. Since $\tilde w$ is $n$-periodic, we also have $\eta + \Delta_{T_n}^1 \tilde w \leq 3$ and thus $\eta$ is stabilizable on the torus $T_n$.
\end{proof}

The preceding lemmas give us a simple prescription for checking whether a rational matrix $A$ is in $\Gamma$: compute $s=\Delta^1\clg{q_A}$ on the appropriate torus, and check if this is a stabilizable configuration.  To check that $s$ is stabilizable on the torus, we simply topple vertices with $\geq 4$ chips until either reaching a stable configuration, or until every vertex has toppled at least once, in which case Proposition \ref{p.v0} implies that $s$ is not stabilizable.

We thus can determine the boundary of $\Gamma$ to arbitrary precision algorithmically.  For $(a,b) \in \R^2$ let us define
	\[  c_0(a,b) = \sup\{ c \mid M(a,b,c) \in \Gamma \}. \]
By Fact~\ref{f.cl}, we have $M(a,b,c) \in \bar \Gamma$ if and only if $c \leq c_0(a,b)$.  Hence the boundary $\partial \Gamma$ is completely determined by the Lipschitz function $c_0(a,b)$.
In Figure \ref{f.numerical}, the shade of the pixel at $(a,b)$ corresponds to a value $c$ that is provably within $\frac{1}{1024}$ of $c_0(a,b)$. 

\medskip
\noindent
The above results are sufficient for confirmations for confirmation of properties of $\Gamma$ much more basic than the characterization from Theorem~\ref{t.apollo}. In particular, it is easy to deduce the following two facts:

\begin{fact}
If $A$ is rational and $\trace(A)<2$, then $A\in \Gamma$.\qed
\label{f.l2}
\end{fact}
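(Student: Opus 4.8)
The plan is to reduce membership in $\Gamma$ to a chip-counting statement on a torus. By Lemma~\ref{l.stab}, $A\in\Gamma$ exactly when $\eta:=\Delta^1\clg{q_A}$ is stabilizable, and since $A$ has entries in $\tfrac1n\Z$ for some positive integer $n$, Lemma~\ref{l.per} makes $\eta$ a $2n$-periodic configuration; by Lemma~\ref{torus} it therefore suffices to stabilize $\eta$ on the torus $T:=T_{2n}$. First I would compute the total number of chips on $T$. Writing $\clg{q_A}=q_A+f$ with $f:=\clg{q_A}-q_A$ periodic (as in the proof of Lemma~\ref{l.per}), and using that $\Delta^1 q_A\equiv\trace(A)$ is constant while $\sum_{x\in T}\Delta^1 f(x)=0$ for any periodic $f$, I obtain $\sum_{x\in T}\eta(x)=|T|\,\trace(A)<2|T|$.

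The crux is then the following: a configuration on $T$ whose total number of chips is strictly less than the number of edges $|E(T)|=2|T|$ must be stabilizable. This is precisely the terminating regime of the abelian chip-firing game on a finite connected graph: when the number of chips is below $|E|$, some vertex can never fire, so the process halts (equivalently, by Proposition~\ref{p.v0} on the torus, the odometer vanishes somewhere). I would either invoke this standard fact or, to stay within the paper's framework, argue directly via the Least Action Principle (Proposition~\ref{p.la}): stabilizability on $T$ is equivalent to producing an integer $w\geq0$ with $\eta+\Delta^1 w\leq 3$, i.e. to solving $\Delta^1 w\leq 3-\eta$. The target $3-\eta$ is integer-valued with strictly positive total mass $\sum_{x\in T}(3-\eta(x))=|T|\,(3-\trace(A))>0$, which is the slack that should make such a $w$ exist. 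Given $w$, the function $u:=\clg{q_A}+v$ built from the resulting nonnegative odometer $v$ satisfies $u\geq q_A$ and $\Delta^1 u\leq 3$, witnessing $A\in\Gamma$ directly from the definition \eqref{l.gamma}.

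The main obstacle is exactly this termination/existence step, and the subtlety is that positivity of the total slack does not by itself produce $w$: the image of $\Delta^1$ over $\Z$ is only a finite-index sublattice of the zero-sum integer functions (its cokernel is the sandpile group of $T$), so one cannot redistribute the surplus mass completely freely. What rescues the argument is that $\eta$ is uniformly bounded, $\eta(x)=\trace(A)+\Delta^1 f(x)\in(\trace(A)-4,\trace(A)+4)$, so the configuration is genuinely low-density rather than concentrated on a few sites; this is the hypothesis under which the chip-firing game is known to terminate. I expect the cleanest writeup to cite the termination bound (chips $<$ edges implies a finite game) and then to remark, via monotonicity of the odometer in $\eta$ (a consequence of Proposition~\ref{p.la}), that the possibly negative values of $\eta$ only help. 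Downward-closedness of $\Gamma$ (Fact~\ref{f.cl}) is not needed for the argument, though it is consistent with the conclusion, since every matrix dominated by one of trace $<2$ again has trace $<2$.
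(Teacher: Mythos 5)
Your proposal is correct and follows the paper's argument essentially verbatim: reduce to the torus $T_{2n}$ via Lemmas~\ref{l.stab}, \ref{l.per}, and~\ref{torus}, compute that the total number of chips is $|T|\trace(A)<2|T|=|E(T)|$, and invoke the criterion (attributed in the paper to Rossin) that a configuration on a finite connected graph with fewer chips than edges stabilizes, which the paper justifies by the injection sending each edge to the last chip to travel across it in any legal sequence that topples every vertex at least once. The only correction worth making is to your closing remark: the uniform boundedness of $\eta$ is not ``the hypothesis under which the game is known to terminate''---the termination criterion depends only on the total chip count, which you have already bounded, and the cokernel issue you raise is resolved by the injection argument, not by any low-density or equidistribution hypothesis.
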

\begin{fact}
If $A$ is rational and $\trace(A)>3$, then $A\not \in \Gamma$.\qed
\label{f.g3}
\end{fact}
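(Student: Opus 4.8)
The plan is to show that a matrix with $\trace(A) > 3$ forces the associated torus configuration to carry too many chips, on average, to ever stabilize. By Lemma~\ref{l.stab}, $A \in \Gamma$ if and only if $\eta := \Delta^1\clg{q_A}$ is stabilizable, and if $A$ has entries in $\frac1n\Z$ then Lemma~\ref{l.per} shows $\eta$ is $2n$-periodic, so by Lemma~\ref{torus} it suffices to examine stabilizability on the torus $T_{2n}$. I would argue by contradiction: assuming $\eta$ stabilizes on $T_{2n}$, I will compute the mean of $\eta$ over $T_{2n}$ and show it equals $\trace(A)$, which exceeds the maximal mean $3$ permitted by any stable configuration. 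Since toppling conserves the total number of chips on the (sink-free) torus, this will be impossible.

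The key computation is that the mean of $\eta$ over $T_{2n}$ is exactly $\trace(A)$. Writing $f := \clg{q_A} - q_A$, the proof of Lemma~\ref{l.per} already records that $f$ is $2n$-periodic, so it descends to a genuine function on $T_{2n}$, and hence
\[
\sum_{x \in T_{2n}} \Delta^1 f(x) = 0,
\]
because the Laplacian of any function on a finite graph sums to zero (each edge $\{x,y\}$ contributes $(f(y)-f(x)) + (f(x)-f(y))$). A direct second-difference computation gives $\Delta^1 q_A \equiv \trace(A)$, a constant. Decomposing $\eta = \Delta^1\clg{q_A} = \Delta^1 q_A + \Delta^1 f = \trace(A) + \Delta^1 f$ and averaging over the $(2n)^2$ vertices of $T_{2n}$ therefore yields mean exactly $\trace(A)$.

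To finish, I would note that any stable configuration on $T_{2n}$ has at most $3$ chips at each of its $(2n)^2$ vertices, hence mean at most $3$; since toppling on the torus preserves the total number of chips, a configuration of mean $\trace(A) > 3$ can never reach a stable state. This contradicts the stabilizability of $\eta$ on $T_{2n}$, and so $A \notin \Gamma$. The argument is short, and its only real content is the mean computation; there is no serious obstacle, but the point to get right is that $\Delta^1 q_A$ is the constant $\trace(A)$ while the rounding correction $\Delta^1 f$ has zero mean on the torus, so that the conservation law applies cleanly. (The complementary Fact~\ref{f.l2}, by contrast, cannot be proved by this mean argument alone, since a small mean does not by itself produce a stabilizing odometer.)
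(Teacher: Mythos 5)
Your proof is correct and follows exactly the paper's intended argument: the paper's own justification for Fact~\ref{f.g3} is the one-line observation that for rational $A$ the average density of $\eta=\Delta^1\clg{q_A}$ on the appropriate torus is exactly $\trace(A)$, which you have simply spelled out (via $\Delta^1 q_A\equiv\trace(A)$ and the vanishing mean of $\Delta^1(\clg{q_A}-q_A)$) together with conservation of chips under toppling. No differences worth noting.
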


In both cases, the relevant observation is that for rational $A$, $\trace(A)$ is exactly the average density of the corresponding configuration $\eta=\Delta^1\clg{q_A}$ on the appropriate torus.  This is all that is necessary for Fact \ref{f.g3}.  For Fact \ref{f.l2}, the additional observation needed (due to Rossin \cite{Rossin}) is that on any finite connected graph, a chip configuration with fewer chips than there are edges in the graph will necessarily stabilize: for unstabilizable configurations, a legal sequence toppling every vertex  at least once gives an injection from the edges of the graph to the chips, mapping each edge to the last chip to travel across it.

Facts~\ref{f.l2} and~\ref{f.g3} along with continuity imply that $2 \leq c_0(a,b) \leq 3$ for all $(a,b) \in \R^2$.  With additional work, but without requiring the techniques of \cite{2014integersuperharmonic}, the above results can be used to show that $c_0(a,b)=2$ for all $a \in 2\Z$ and $b\in \R$, confirming Theorem~\ref{t.apollo} along the vertical lines $x=a$ for $a \in 2\Z$. Finally, let us remark that $c_0$ has the translation symmetries 
	\[ c_0(a+2,b) = c_0(a,b) = c_0(a,b+2). \]
This follows easily from the observation that $\frac{1}{2}x(x+1)-\frac{1}{2} y(y+1)$ and $xy$ are both integer-valued discrete harmonic functions on $\Z^2$.

\section{Apollonian circle packings}
\label{s.packings}
For any three tangent circles $C_1,C_2,C_3$, we consider the corresponding triple of tangent closed discs $D_1,D_2,D_3$ with disjoint interiors.  We allow lines as circles, and allow the closure of any connected component of the complement of a circle as a closed disc.  Thus we allow internal tangencies, in which case one of the closed discs is actually the unbounded complement of an open bounded disc.  Note that to consider $C_1,C_2,C_3$ pairwise tangent we must require that three pairwise intersection points of the $C_i$ are actually distinct, or else the corresponding configuration of the $D_i$ is not possible.  In particular, there can be at most two lines among the $C_i$, which are considered to be tangent at infinity whenever they are parallel.

The three tangent closed discs $D_1,D_2,D_3$  divide the plane into exactly two regions; thus any pairwise triple of circles has two \emph{Soddy circles}, tangent to each circle in the triple.  If all tangencies are external and at most one of $C_1,C_2,C_3$ is a line, then exactly one of the two regions bordered by the $D_i$ is bounded, and the Soddy circle in the bounded region is called the \emph{successor} of the triple.

An \emph{Apollonian circle packing}, as defined in the introduction, is a minimal set of circles containing some triple of pairwise-tangent circles and closed under adding all Soddy circles of pairwise-tangent triples.   Similarly, a \emph{downward Apollonian circle packing} is a minimal set of circles containing some triple of pairwise externally tangent circles and closed under adding all successors of pairwise-tangent triples.

For us, the crucial example of an Apollonian packing is the Apollonian band packing.  This is the packing which appears in Theorem \ref{t.apollo}. A famous subset is the Ford circles, the set of circles $C_{p/q}$ with center $(\frac{2p}{q},\frac{1}{q^2})$ and radius $\frac{1}{q^2}$, where $p/q$ is a rational number in lowest terms.  A simple description of the other circles remains unknown, Theorem~\ref{t.apollo} provides an interesting new perspective.

\bigskip

  An important observation regarding Apollonian circle packings is that a triple of pairwise externally tangent circles is determined by its intersection points with its successor:
\begin{proposition}
\label{p.ydet}  
Given a circle $C$ and points $y_1,y_2,y_3\in C$, there is at exactly one choice of pairwise externally tangent circles $C_1,C_2,C_3$ which are externally tangent to $C$ at the points $y_1,y_2,y_3$.\qed
\end{proposition}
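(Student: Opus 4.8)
My plan is to establish existence and uniqueness together by an inversive reduction centered at one of the three prescribed tangency points. First I would fix an inversion $\iota$ centered at $y_1$. Since $C$ passes through $y_1$, its image $\iota(C)$ is a line $L$; and since any admissible $C_1$ is tangent to $C$ at the center of inversion $y_1$, its image $\iota(C_1)$ must be a line $L_1$ parallel to $L$. The remaining circles $C_2,C_3$ do not pass through $y_1$ (external tangency forces $C_i\cap C=\{y_i\}$, so $y_1\notin C_2,C_3$), so their images $\Gamma_2=\iota(C_2)$ and $\Gamma_3=\iota(C_3)$ are genuine circles, tangent to $L$ at the image points $y_2'=\iota(y_2)$ and $y_3'=\iota(y_3)$.

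Under this reduction the three mutual-tangency conditions become: $\Gamma_2$ and $\Gamma_3$ are each tangent to both parallel lines $L$ and $L_1$, and tangent to one another. A circle tangent to two parallel lines has diameter equal to the width $h$ of the strip between them and is inscribed in that strip; being tangent to $L$ at $y_2'$ (resp.\ $y_3'$) pins its center to the point at height $h/2$ over $y_2'$ (resp.\ $y_3'$). The two centers then lie at a common height, so their distance equals $|y_2'-y_3'|$, and the two equal-radius circles are tangent exactly when this distance equals $h$. Hence $h=|y_2'-y_3'|$ is forced; this single positive number (positive because $y_2\ne y_3$) determines $L_1$, and with it $\Gamma_2$ and $\Gamma_3$, uniquely. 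Inverting back recovers $C_1=\iota(L_1)$, $C_2=\iota(\Gamma_2)$, $C_3=\iota(\Gamma_3)$. Every choice above was forced, giving uniqueness, while the construction is executable, giving existence.

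The one genuine subtlety—and the step I expect to require the most care—is the bookkeeping of tangency types, i.e.\ confirming that the circles produced really are \emph{externally} tangent to $C$ and to each other in the sense the paper attaches to its closed discs. Concretely, one must check that $L_1$ lies on the correct side of $L$: the circles $\Gamma_2,\Gamma_3$ sit in the half-plane that is the image of the exterior of $C$ (since $C_2,C_3$ are external to $C$), and $L_1$ must bound the strip on that same side, which rules out the mirror solution and re-confirms uniqueness. In the generic case, where the three arcs of $C$ cut out by $y_1,y_2,y_3$ are each shorter than a semicircle, all of $C_1,C_2,C_3$ come back as proper circles meeting $C$ and each other in ordinary external tangencies. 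The degenerate cases—an arc equal to a semicircle (where one $C_i$ becomes the tangent line to $C$ at $y_i$) or greater than a semicircle (where one $C_i$ becomes an enclosing circle whose associated closed disc is its unbounded complement)—are exactly the situations the paper's disc conventions are set up to accommodate, and in each the inversive construction still returns the unique admissible circle. Verifying these cases cleanly, rather than the core existence and uniqueness, is where the real work lies.
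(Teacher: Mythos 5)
The paper states this proposition with no proof at all (it is asserted with a \qed as a standard fact about tangent circles), so there is nothing to compare line by line; your inversive argument is correct and is the natural way to supply the missing proof. Inverting at $y_1$ does reduce everything to the elementary strip picture: $C_1$ must go to a line $L_1$ parallel to $L=\iota(C)$, any circle tangent to both parallel lines is inscribed in the strip with diameter equal to its width $h$, the tangency of $\Gamma_2$ and $\Gamma_3$ forces $h=|y_2'-y_3'|$, and the side of $L$ on which the strip lies is forced by which half-plane is the image of the correct side of $C$. All choices are determined, so existence and uniqueness follow together. Your closing caveat is also the right one to flag, and is worth making slightly sharper: for some positions of $y_1,y_2,y_3$ (e.g.\ two points close together and the third nearly antipodal) the unique solution \emph{necessarily} has one $C_i$ equal to a tangent line or to a circle enclosing $C$, whose associated closed disc is the unbounded complement. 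So the proposition is only true under the paper's generalized disc conventions for ``circle'' and ``tangency''; read with a strictly classical notion of external tangency between proper circles, existence would fail for those configurations. Since the paper explicitly admits lines and unbounded complementary discs, your proof establishes the statement in the sense in which it is used (Remark \ref{r.dual} and Theorem \ref{t.Auni}).
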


\noindent Proposition \ref{p.ydet}, together with its counterpart for the case allowing an internal tangency, allows the deduction of the following fundamental property of Apollonian circle packings.

\begin{proposition}
\label{p.mob}
Let $\ccc$ be an Apollonian circle packing.  A set $\ccc'$ of circles is an Apollonian circle packing if and only if $\ccc'=\mu(\ccc)$ for some M\"obius transformation $\mu$.\qed
\end{proposition}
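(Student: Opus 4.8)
The proposition asserts that all Apollonian circle packings lie in a single orbit of the M\"obius group, and I would prove the two implications separately, with the forward direction routine and the converse containing all the content. For the easy direction, suppose $\ccc' = \mu(\ccc)$ for a M\"obius transformation $\mu$. Since $\mu$ is a homeomorphism of the sphere $\hat{\C} = \C \cup \{\infty\}$ that carries circles (and lines) to circles, preserves tangency, and carries the disjoint-interior disc configuration of a pairwise-tangent triple to another such configuration, it maps the (unordered) pair of Soddy circles of any triple to the pair of Soddy circles of the image triple. Thus if $\ccc$ is the closure of a generating triple $T$ under the Soddy operation, then $\mu(\ccc)$ is the closure of $\mu(T)$, and hence is again an Apollonian circle packing. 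This same equivariance reduces the converse to a single statement: \emph{any two pairwise externally tangent triples $T$ and $T'$ are related by a M\"obius transformation}. Indeed, granting that, choosing generating triples $T \sbs \ccc$ and $T' \sbs \ccc'$ and a map $\mu$ with $\mu(T) = T'$ forces $\mu(\ccc) = \ccc'$ by induction on the generations of the packing.

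To construct such a $\mu$, I would use Proposition \ref{p.ydet} to pin a triple down from its contact data with a successor. After a harmless preliminary M\"obius transformation I may assume each of $T, T'$ has at most one line and hence a genuine successor circle; let $S, S'$ be these successors and let $y_1, y_2, y_3 \in S$ and $y_1', y_2', y_3' \in S'$ be the distinct points at which $T, T'$ touch them. Let $\mu$ be the unique M\"obius transformation with $\mu(y_i) = y_i'$ for $i = 1,2,3$. Since a circle is determined by three of its points, $\mu(S) = S'$, and $\mu(T)$ is then a pairwise-tangent triple touching $S'$ at $y_1', y_2', y_3'$. If these tangencies are external, Proposition \ref{p.ydet} identifies $\mu(T)$ with the unique externally tangent triple touching $S'$ at those points, namely $T'$, and the construction is complete.

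The main obstacle is that external tangency is not preserved by every M\"obius transformation: the map $\mu$ determined by $\mu(y_i) = y_i'$ may instead carry $T$ to the triple touching $S'$ \emph{internally} at $y_1', y_2', y_3'$. All three tangencies switch type together, since $\mu$ either preserves or interchanges the two sides of $S'$ consistently, and we have no remaining freedom to adjust $\mu$ once the three point-values are fixed. This is exactly where the internal-tangency counterpart of Proposition \ref{p.ydet} enters, guaranteeing that $\mu(T)$ is the unique inscribed triple at those contact points. The clean remedy is to precompose with the inversion $\iota$ in the circle $S'$ (a M\"obius transformation in the inversive sense used throughout the theory of Apollonian packings): $\iota$ fixes $S'$ pointwise, hence fixes each contact point $y_i'$, while interchanging the two sides of $S'$. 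Thus $\iota \circ \mu$ carries $T$ to the triple touching $S'$ externally at $y_1', y_2', y_3'$, which is $T'$ by Proposition \ref{p.ydet}.

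Since $\iota \circ \mu$ is again a M\"obius transformation, this produces the map sending $T$ to $T'$ in either case, and the equivariance established in the first paragraph then yields $\ccc' = (\iota \circ \mu)(\ccc)$. The remaining bookkeeping---checking that the preliminary normalization can always remove a second line so that a successor exists, and that the Soddy-closure of a chosen generating triple recovers the whole packing so that the inductive step closes---is routine and does not affect the structure of the argument. The one genuinely delicate point, which I would treat most carefully, is the matching of tangency types under $\mu$ via the two forms of Proposition \ref{p.ydet}, since this is where the geometry (rather than mere point-counting for M\"obius maps) is doing the work.
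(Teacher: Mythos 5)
Your proof is correct and follows exactly the route the paper intends: the paper states Proposition \ref{p.mob} without proof, remarking only that it "follows from Proposition \ref{p.ydet} together with its counterpart for the case allowing an internal tangency," and your argument is precisely a fleshing-out of that remark (pin down a M\"obius map by the three contact points with a successor circle, then correct the tangency type by inverting in that successor, using the two uniqueness statements to identify the image triple). The one convention to make explicit is that your corrected map $\iota \circ \mu$ is orientation-reversing, so "M\"obius transformation" must be read as the full inversive group rather than $PSL_2(\C)$ --- standard in the Apollonian-packing literature and surely what the authors intend.
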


\noindent The use of M\"obius transformations allows us to deduce a geometric rule based on medians of triangles concerning successor circles in Apollonian packings:

\begin{lemma}
  Suppose that circles $C,C_1,C_2$ are pairwise tangent, with Soddy circles $C_0$ and $C_3$, and let $z_i^2=p_i-c$, viewed as a complex number, where $c$ is the center of $C$ and $p_i$ is the intersection point of $C$ and $C_i$ for each $i$.  If $L_i$ is a line parallel to the vector $z_i$ which passes through 0 if $i=1,2,3$ and does not pass through 0 if $i=0$, then $L_3$ is a median line of the triangle formed by the lines $L_0,L_1,L_2$.
\label{L.median}
\end{lemma}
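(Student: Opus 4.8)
The plan is to reduce everything to the four angular positions of the tangent points of $C_0,C_1,C_2,C_3$ on $C$, prove that these four points are in harmonic position, and then match the harmonic relation to the median condition by a short trigonometric computation.

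First I would record what the vectors $z_i$ measure. Let $c$ be the center and $r$ the radius of $C$, identify $\R^2=\C$, let $\iota=\sqrt{-1}$, and write the tangent point of $C$ with $C_i$ as $p_i=c+re^{\iota\theta_i}$. Then $z_i^2=p_i-c=re^{\iota\theta_i}$, so $z_i$---and hence the line $L_i$---points in the direction of angle $\theta_i/2$ (well-defined modulo $\pi$, matching the sign ambiguity of the square root). Thus the lemma becomes a statement about the four half-angles $\theta_0/2,\dots,\theta_3/2$. Note also that $L_1,L_2,L_3$ all pass through the common point $O=L_1\cap L_2$, so a median line through $O$ must be the median from the vertex $O$; and since translating $L_0$ parallel to itself scales the triangle homothetically from $O$, the direction of this median depends only on the three directions involved. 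Hence the parallel representative of $L_0$ is immaterial, and it suffices to identify the median-from-$O$ direction with the direction of $L_3$.

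The geometric heart of the argument is the claim that the four tangent points are harmonic on $C$, i.e.\ the cross ratio $(p_1,p_2;p_3,p_0)=-1$; this is where the M\"obius geometry of Proposition~\ref{p.mob} enters. I would apply the inversion centered at $p_1=C\cap C_1$. Since $C$ and $C_1$ are tangent at the center of inversion, their images are two parallel lines; $C_2$ becomes a circle tangent to both lines, and the two Soddy circles $C_0,C_3$ become the two circles tangent to both lines and to the image of $C_2$---that is, the immediate left and right neighbors of the image of $C_2$ in the strip. Circles inscribed in a strip all have the same diameter $d$, so these three images are congruent and their tangent points with the image of $C$ are equally spaced by $d$, while the image of $p_1$ is the point at infinity. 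The cross ratio $(\infty,u;u+d,u-d)=-1$ is then immediate, and M\"obius-invariance of the cross ratio transports it back to $-1$ on $C$.

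Finally I would convert both conditions to trigonometry and compare. Writing $a=\theta_1/2,\ b=\theta_2/2,\ m=\theta_3/2,\ g=\theta_0/2$ and using $e^{\iota\theta}-e^{\iota\psi}=2\iota\,e^{\iota(\theta+\psi)/2}\sin\tfrac{\theta-\psi}{2}$, the harmonic relation $(p_1,p_2;p_3,p_0)=-1$ collapses (the exponential prefactors cancel) to
\[
\sin(a-m)\sin(b-g)=-\sin(a-g)\sin(b-m).
\]
On the other hand, taking $L_0$ to be the line $\{-\sin g\,x+\cos g\,y=h\}$ with $h\neq0$, its intersections with $L_1$ and $L_2$ occur at signed distances $h/\sin(a-g)$ and $h/\sin(b-g)$ from $O$ along the unit directions $(\cos a,\sin a)$ and $(\cos b,\sin b)$, so their midpoint lies in the direction
\[
D=(D_1,D_2)=\frac{(\cos a,\sin a)}{\sin(a-g)}+\frac{(\cos b,\sin b)}{\sin(b-g)}.
\]
The line $L_3$, of direction angle $m$, is the median from $O$ exactly when $(\cos m,\sin m)$ is parallel to $D$, i.e.\ when the scalar cross product $\cos m\,D_2-\sin m\,D_1$ vanishes; expanding gives precisely
\[
\frac{\sin(a-m)}{\sin(a-g)}+\frac{\sin(b-m)}{\sin(b-g)}=0,
\]
which is the same identity. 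This proves that $L_3$ is the median line from $O$. I expect the main obstacle to be the harmonic-position step: setting up the inversion so that the two Soddy circles are manifestly the two strip-neighbors of the image of $C_2$, and keeping the square-root branch consistent so that the direction of $L_i$ is unambiguously $\theta_i/2$ modulo $\pi$. The degenerate cases---where $\sin(a-g)$ or $\sin(b-g)$ vanishes, corresponding to the triple being tangent to a common line as in the remark after Observation~\ref{o.succ}---must be excluded as the nondegeneracy needed to form a triangle, after which the remaining computation is routine.
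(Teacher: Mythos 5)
Your proof is correct, but it takes a genuinely different route from the paper's. The paper normalizes $C$ to the unit circle with $z_0^2=-1$, writes down an explicit M\"obius transformation $\mu_{z_1,z_2}$ carrying the Ford-circle configuration $C',C_0',C_1',C_2',C_3'$ onto $C,C_0,C_1,C_2,C_3$, deduces $\mu_{z_1,z_2}(1/2)=z_3^2$, and then verifies by a direct trigonometric computation that this value is the square of the unit vector in the median direction. You instead isolate the M\"obius-invariant content: the inversion at $p_1$ sends $C,C_1$ to the two boundary lines of a strip and $C_0,C_2,C_3$ to three consecutive inscribed circles, whence the four tangent points are harmonic, $(p_1,p_2;p_3,p_0)=-1$; you then show that this harmonic relation and the median condition reduce to the same identity $\sin(a-m)\sin(b-g)=-\sin(a-g)\sin(b-m)$. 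I checked both halves: the cross ratio $(\infty,u;u+d,u-d)=-1$ is right (and insensitive to which Soddy circle is which, since $-1$ is fixed under swapping the last pair), and the exponential prefactors in $e^{\iota\theta}-e^{\iota\psi}=2\iota e^{\iota(\theta+\psi)/2}\sin\frac{\theta-\psi}{2}$ do cancel in the cross ratio, as does the median computation from $O=L_1\cap L_2$. Your version buys a cleaner conceptual statement (harmonicity of the four tangent points) and a shorter identity check, at the cost of not exhibiting the explicit map; the paper's version is more computational but produces the formula for $\mu_{z_1,z_2}$, and its Ford-circle normalization recurs elsewhere in the text. One small simplification available to you: the degeneracies you flag at the end ($\sin(a-g)=0$ or $\sin(b-g)=0$) cannot actually occur, because the four tangency points on $C$ are automatically distinct for a pairwise tangent quadruple, so $a,b,m,g$ are distinct modulo $\pi$ and the triangle always exists.
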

\begin{proof}
Without loss of generality, we assume that $C$ is a unit circle centered at the origin, and that $z_0^2=-1$.  The M\"obius transformation
  \begin{equation*}
    \mu_{z_1,z_2}(z)=\frac{z_1+z_1z_2-z(z_1-z_2)}{1+z_2+z(z_1-z_2)}
  \end{equation*}
sends $0$ to $z_1^2$, $1$ to $z_2^2$, and $\infty$ to $-1=z_0^2$.  Thus, for the pairwise tangent generalized circles $C'=\{y=0\}, C_0'=\{y=1\}, C_1'=\{x^2+(y-\frac 1 2)^2=\frac 1 4\}, C_2'=\{(x-1)^2+(y-\frac 1 2)^2=\frac 1 4\}, C_3'=\{(x-\frac 1 2)^2=\frac 1 {64}\}$ (these are some of the ``Ford circles''), we have that $\mu$ maps the intersection point of $C', C_i'$ to the intersection point of $C, C_i$ for $i=0,1,2$, thus it must map the intersection point of $C', C_3'$ to the intersection point of $C,C_3$, giving $\mu_{z_1,z_2}(\frac 1 2)=z_3$.  Thus it suffices to show that for
\begin{equation*}
  f(z_1,z_2):=\mu_{z_1,z_2}(1/2)=\frac{z_1+z_2+2z_1z_2}{z_1+z_2+2},
\end{equation*}
we have that
\begin{equation}
  f(z_1^2,z_2^2)=\frac{
\displaystyle\left(1+\frac{\Re {z_1}\Im {z_2}+\Re {z_2} \Im {z_1}}{2\Re {z_1}\Re {z_2}}i \right)^2
}
{
\displaystyle 1+\left(\frac{\Re {z_1}\Im {z_2}+\Re {z_2} \Im {z_1}}{2\Re {z_1}\Re {z_2}}\right)^2
},
\label{l.mediangoal}
\end{equation}
as the right-hand side is the square of the unit vector whose tangent is the average of the tangents of $z_1$ and $z_2$; this is the correct slope of our median line since $z_0^2=-1$ implies that $L_0$ is vertical.  We will check \eqref{l.mediangoal} by writing $z_1=\cos \alpha+i\sin \alpha$, $z_2=\cos \beta+i\sin \beta$ to rewrite $f(z_1^2,z_2^2)$ as 
\begin{multline}
\frac{
(\cos\alpha+i\sin\alpha)^2+(\cos\beta+i\sin\beta)^2+2(\cos\alpha+i\sin\alpha)^2(\cos\beta+i\sin\beta)^2
}
{
(\cos\alpha+i\sin\alpha)^2+(\cos\beta+i\sin\beta)^2+2
}\\=
\frac{
(\cos(\alpha+\beta)+i\sin(\alpha+\beta))(\cos(\alpha-\beta)+\cos(\alpha+\beta)+i\sin(\alpha+\beta))
}
{
\cos(\alpha-\beta)(\cos(\alpha+\beta)+i\sin(\alpha+\beta))+1
},
\label{l.fz2f}
\end{multline}
where we have used the identity 
\begin{equation*}
  (\cos x+i\sin x)^2+(\cos y+i\sin y)^2=2\cos(x-y)(\cos(x+y)+i \sin (x+y)),
\end{equation*}
which can be seen easily geometrically.
Dividing the top and bottom of the right side of \eqref{l.fz2f} by $\cos (\alpha+\beta)+i\sin(\alpha+\beta)$ gives
\begin{equation*}
f(z_1^2,z_2^2)=\frac{
\cos(\alpha-\beta)+\cos(\alpha+\beta)+i\sin(\alpha+\beta)
}
{
\cos(\alpha-\beta)+\cos(\alpha+\beta)-i\sin(\alpha+\beta)
}.
\end{equation*}
Thus to complete the proof, note that the right-hand side of \eqref{l.mediangoal} can be can simplified as
\begin{multline*}
  \frac{\left(1+\frac{\cos \alpha\sin \beta+\cos \beta \sin \alpha}{2\cos \alpha\cos \beta}i \right)^2}{1+\left(\frac{\cos \alpha\sin \beta+\cos \beta \sin \alpha}{2\cos \alpha\cos \beta}\right)^2}=
\frac{
\left(\cos(\alpha+\beta)+\cos(\alpha-\beta)+i\sin(\alpha+\beta)\right)^2
}
{
(\cos(\alpha+\beta)+\cos(\alpha-\beta))^2+\sin^2(\alpha+\beta)
}\\=
\frac{
\cos(\alpha+\beta)+\cos(\alpha-\beta)+i\sin(\alpha+\beta)
}
{
\cos(\alpha+\beta)+\cos(\alpha-\beta)-i\sin(\alpha+\beta)
}
\end{multline*}
by multiplying the top and bottom by $(2\cos\alpha\cos\beta)^2$ and using the Euler identity consequences
\begin{align*}
  2\cos\alpha\cos\beta&=\cos(\alpha+\beta)-\cos(\alpha-\beta)\\
  \cos\alpha\sin\beta&+\cos\beta\sin\alpha=\sin(\alpha+\beta).\qedhere
\end{align*}
\end{proof}

\begin{remark}
By Proposition \ref{p.mob}, a set of three points $\{x_1,x_2,x_3\}$ on a circle $C$ uniquely determine three other points $\{y_1,y_2,y_3\}$ on $C$, as the points of intersection of $C$ with successor circles of triples $\{C,C_i,C_j\}$, where $C_1,C_2,C_3$ are the unique triple of circles which are pairwise externally tangent and externally tangent to $C$ at the points $x_i$.  Since the median triangle of the median triangle of a triangle $T$ is homothetic to $T$, Lemma \ref{L.median} implies that this operation is an involution: the points determined by $\{y_1,y_2,y_3\}$ in this way is precisely the set $\{x_1,x_2,x_3\}$.
\label{r.dual}
\end{remark}

\begin{figure}[t]
\includegraphics[width=.5\linewidth]{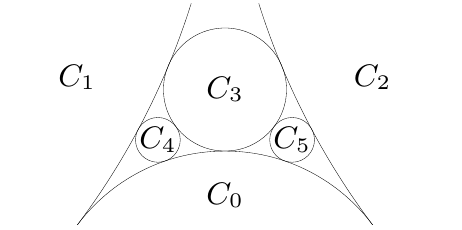}
\caption{\label{f.bounds} The circle arrangement from Proposition \ref{p.bounds}.}
\end{figure}

We close this section with a collection of simple geometric constraints on arrangements of externally tangent circles (Figure \ref{f.bounds}), whose proofs are rather straightforward:

\begin{proposition}
\label{p.bounds}
  Let $C_0,C_1,C_2$ be pairwise externally tangent proper circles with successor $C_3$, and let $C_4$ and $C_5$ be the successors of $C_0,C_1,C_3$ and $C_0,C_2,C_3$, respectively.  Letting $c_i$ denote the center of the circle $C_i$, we have the following geometric bounds:
  \begin{enumerate}
  \item $c_ic_3c_j\leq \pi$ for $\{i,j\}\sbs \{0,1,2\}.$ \label{i.csep}
  \item $\angle c_4 c_0 c_3,\angle c_5c_0c_3< \frac \pi 2.$ \label{i.maxsucc}
  \item $\angle c_4 c_0 c_3\geq \frac 1 2 \angle c_5 c_0 c_3$ (and vice versa).\label{i.closesucc}
  \item $\angle c_4c_3c_5\geq 2\cdot \arctan(3/4)$.\label{i.succsep}\qed
  \end{enumerate}
\end{proposition}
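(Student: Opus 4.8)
These are immediate from the law of cosines on the triangles of centers, using that external tangency gives $|c_ic_j|=r_i+r_j$ (writing $r_i$ for the radius of $C_i$). For (1), the triangle $c_ic_3c_j$ satisfies $|c_ic_3|+|c_jc_3|=(r_i+r_3)+(r_j+r_3)\ge r_i+r_j=|c_ic_j|$, so the angle at $c_3$ is at most $\pi$ (with equality only in the degenerate case $r_3=0$). For (2), the triangle $c_0c_3c_4$ gives
\[
\cos\angle c_4c_0c_3=\frac{(r_0+r_4)^2+(r_0+r_3)^2-(r_3+r_4)^2}{2(r_0+r_4)(r_0+r_3)}=\frac{r_0(r_0+r_3+r_4)-r_3r_4}{(r_0+r_4)(r_0+r_3)},
\]
which is positive: since $C_4$ is the successor of $\{C_0,C_1,C_3\}$, its curvature exceeds each parent's (Descartes' circle theorem), so $r_4<r_0$ and hence $r_3r_4<r_0r_3<r_0(r_0+r_3+r_4)$. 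Thus $\angle c_4c_0c_3<\pi/2$, and likewise for $\angle c_5c_0c_3$.

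\textbf{Bounds (3) and (4).} The plan is to rigidify the picture by inverting at the point $T$ where $C_0$ and $C_3$ are tangent; write $\iota_T$ for this inversion (radius $1$, say). It sends $C_0,C_3$ to parallel lines, which after a similarity we take to be $\ell_0=\{y=0\}$ and $\ell_3=\{y=2\}$; the circles $C_1,C_2,C_4,C_5$, being tangent to both, become unit circles centered on $\{y=1\}$, and the tangency pattern ($C_1C_2$, $C_1C_4$, $C_2C_5$ tangent, the rest not) pins their centers—up to the reflection exchanging $C_1\leftrightarrow C_2$—to $B_1=(-1,1),B_2=(1,1),B_4=(-3,1),B_5=(3,1)$. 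Every configuration is thus recovered from this single rigid picture by inverting back through a center $T=(s,h)$, so the whole $2$-parameter family is parametrized by $T$. The crucial point is to identify the admissible range: because $C_3$ must be the \emph{bounded} successor of $C_0,C_1,C_2$ (and $C_4,C_5$ of their triples), $T$ must lie in the open curvilinear triangle $\Omega$ bounded by $\ell_0,B_1,B_2$—it is exactly for $T\in\Omega$ that inversion interchanges the bounded and unbounded Soddy regions correctly. Note $\Omega$ has vertices $(\pm1,0),(0,1)$ and lies in $\{0\le h\le1\}$.

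Tangencies are preserved by inversion, so the tangent points of $C_3,C_4,C_5$ on $C_0$ are $\iota_T(\infty)=T$, $\iota_T(-3,0)$, $\iota_T(3,0)$, and those of $C_4,C_5$ on $C_3$ are $\iota_T(\mp3,2)$. Using the central-angle identity $\cos\angle PcQ=1-|PQ|^2/2r^2$ with $|\iota_T(a)-\iota_T(b)|=|a-b|/(|a-T|\,|b-T|)$ and the fact that a line at distance $d$ from $T$ inverts to a circle of radius $1/2d$, a short computation gives
\[
\cos\angle c_4c_0c_3=\frac{(s+3)^2-h^2}{(s+3)^2+h^2},\qquad
\cos\angle c_5c_0c_3=\frac{(s-3)^2-h^2}{(s-3)^2+h^2},
\]
\[
\cos\angle c_4c_3c_5=1-\frac{72(h-2)^2}{\big((s+3)^2+(h-2)^2\big)\big((s-3)^2+(h-2)^2\big)}.
\]
From the first two, (2) is reconfirmed at once ($(s\pm3)^2>h^2$ on $\Omega$ since $|s|<1$ and $h<1$), and writing each as $2\arctan\tfrac{h}{|s\pm3|}$ reduces (3) to $3s^2+6s-9\le h^2$ and its $s\mapsto-s$ mirror, which hold because $3s^2\pm6s-9\le0$ for $|s|\le1$. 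For (4), since $2\arctan(3/4)=\arccos(7/25)$, the claim is $\cos\angle c_4c_3c_5\le7/25$; clearing denominators and using the factorization $(41-s^2)\pm8\sqrt{25-s^2}=(\sqrt{25-s^2}\pm4)^2$ reduces this to the single condition $s^2+(h-6)^2\ge25$, i.e. $T$ lies outside the circle of radius $5$ about $(0,6)$. This holds on $\Omega$ because that disk lies in $\{h>1\}$ (its lowest point is $(0,1)$) while $\Omega\subset\{h\le1\}$.

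\textbf{Main obstacle.} The trigonometry collapses pleasantly; the real work is the two geometric inputs feeding it—the rigidity of the inverted picture with its exact center positions, and, more delicately, the proof that the admissible locus for $T$ is precisely $\Omega$ (the successor/bounded-region bookkeeping under inversion). Once $\Omega$ is correctly pinned down, (4) is sharp: equality $s^2+(h-6)^2=25$ occurs only at the vertex $T=(0,1)$, the tangency of $B_1,B_2$, i.e. the degenerate limit in which $C_1,C_2$ become the parallel lines $y=\pm1$, $C_0,C_3$ the unit circles at $(2,0)$ and the origin, and $C_4,C_5$ touch $C_3$ at $(4/5,\pm3/5)$. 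This limit is approached but not attained by proper configurations, matching the non-strict inequality in (4).
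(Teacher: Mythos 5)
Your proof is correct, and there is nothing in the paper to compare it against: Proposition \ref{p.bounds} is stated with a \qed and the remark that the proofs are ``rather straightforward,'' so you have supplied an argument the authors omitted. I checked the two computational inputs. For (2), the law-of-cosines numerator does simplify to $2[r_0(r_0+r_3+r_4)-r_3r_4]$, and $r_4<r_0$ (successor curvature exceeds each parent's) closes the inequality. For (3) and (4), inverting at the $C_0$--$C_3$ tangency point and normalizing does rigidify the picture to the strip with unit circles at $(\pm1,1)$, $(\pm3,1)$; your three cosine formulas are right (the tangency points on $C_0$ are $T$, $\iota_T(-3,0)$, $\iota_T(3,0)$ and on $C_3$ are $\iota_T(\mp3,2)$, and the central-angle identity applies because external tangency puts each tangency point on the segment of centers); the reduction of (3) to $3s^2\pm6s-9\le h^2$ follows from $3(s+3)(s-1)\le 0$ on $|s|\le1$; and the reduction of (4) to $s^2+(h-6)^2\ge25$ holds on $\Omega\subset\{0\le h\le1\}$ with equality only at the excluded vertex $(0,1)$, matching the sharp constant $2\arctan(3/4)=\arccos(7/25)$. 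Two small remarks. First, part (1) as literally stated is vacuous (a non-reflex angle of a triangle is always at most $\pi$), so your rather contentless argument for it proves exactly what is claimed; do not read more into it. Second, the one step worth writing out is the parametrization claim itself: every admissible configuration is, up to similarity, $\iota_{T'}(\text{standard strip})$ with $T'$ the normalized position of the $C_0$--$C_3$ tangency point. This follows because, with $\sigma$ the normalizing similarity, the map $\iota_T\circ\sigma^{-1}$ sends $\sigma(T)$ to $\infty$ and hence factors as a similarity composed with inversion at $T'=\sigma(T)$; after that, your identification of the admissible locus as the component $\Omega$ of the complement of $D_0\cup D_1\cup D_2$ \emph{not} containing $\ell_3$ (so that $\iota_{T'}(\ell_3)$ lands in the bounded Soddy region, and likewise $\iota_{T'}$ of the $(\mp3,1)$ circles are genuine successors of their triples) is exactly right.
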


\section{Apollonian triangles and triangulations}
\label{s.geo}

\noindent We build up to Apollonian triangles and triangulations by defining the \emph{Apollonian curve} associated to an ordered triple of circles.  This will allow us to define the \emph{Apollonian triangle} associated to a quadruple of circles, and finally the \emph{Apollonian triangulation} associated to a downward packing of circles.  We will define these objects implicitly, and then show that they exist and are unique up to translation and homothety (i.e., any two Apollonian curves $\gamma, \gamma'$ associated to the same triple satisfy $\gamma' = a\gamma + \mathbf{b}$ for some $a \in \R$ and $b \in \R^2$).   In Section \ref{s.arec}, we give a recursive description of the Apollonian curves which characterizes these objects without reference to circle packings.

Fix a circle $C_0$ with center $c_0$ and let $C$ and $C'$ be tangent circles tangent to $C_0$ at $x$ and $x'$, and have centers $c$ and $c'$, respectively.  We define $s(C,C')$ to be the successor of the triple $(C_0,C,C')$ and $\alpha(C)$ to be the angle of the vector $v(C):= c-c_0$ with the positive $x$-axis.  Let $v^{1/2}(C)$ to be a complex square root of $v(C)$, and let $\ell^{1/2}(C) = \R v^{1/2}(C)$ be the real line it spans.  (We will actually only use $\ell^{1/2}(C)$, so the choice of square root is immaterial.)  Note that all of these functions depend on the circle $C_0$; we will specify which circle the functions are defined with respect to when it is not clear from context.  

Now fix circles $C_1$ and $C_2$ such that $C_0,C_1,C_2$ are pairwise externally tangent.  Let $\ccc$ denote the smallest set of circles such that $C_1,C_2 \in \ccc$ and for all tangent $C,C' \in \ccc$ we have $s(C,C') \in \ccc$.  
Note that all circles in $\ccc$ are tangent to $C_0$.  

\begin{definition}
\label{d.appcurve}  A (continuous) curve $\gamma:[\alpha(C_1),\alpha(C_2)]\to \R^2$ is an \emph{Apollonian curve} associated to the triple $(C_0,C_1,C_2)$ if for all tangent circles $C,C'\in \ccc$,	\[ \gamma(\alpha(C))-\gamma(\alpha(C')) \in \ell^{1/2}(s(C,C')). \]
\end{definition}
We call $\gamma(\alpha(s(C_1,C_2)))$ the \emph{splitting point} of $\gamma$.  The following Observation implies, in particular, that the splitting point divides $\gamma$ into two smaller Apollonian curves.
\begin{observation}
For any two tangent circles $C,C'\in \ccc$, the restriction $\gamma|_{[\alpha(C),\alpha(C')]}$ is also an Apollonian curve.\qed
\label{o.subcurve}
\end{observation}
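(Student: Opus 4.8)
The plan is to show that the restriction is itself an Apollonian curve, namely the one associated to the triple $(C_0, C, C')$. After relabeling so that $\alpha(C) \le \alpha(C')$, write $\ccc'$ for the set associated to this triple in Definition~\ref{d.appcurve}: the smallest family of circles with $C, C' \in \ccc'$ that is closed under taking successors $s(D,D')$ of tangent pairs $D, D' \in \ccc'$. To verify that $\gamma|_{[\alpha(C),\alpha(C')]}$ is an Apollonian curve for $(C_0, C, C')$ I must check three things: that the restriction is continuous with domain $[\alpha(C),\alpha(C')]$; that $\alpha(D) \in [\alpha(C),\alpha(C')]$ for every $D \in \ccc'$, so that the defining condition is even well-posed on the restricted domain; and that $\gamma(\alpha(D)) - \gamma(\alpha(D')) \in \ell^{1/2}(s(D,D'))$ for every tangent pair $D, D' \in \ccc'$.

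The continuity and the inclusion condition both reduce to the single fact $\ccc' \subseteq \ccc$. This follows by minimality: $\ccc$ contains both $C$ and $C'$ and is closed under the successor operation on tangent pairs, so it is one of the families over which the intersection defining $\ccc'$ is taken, whence $\ccc' \subseteq \ccc$. Continuity of the restriction is automatic. And since every tangent pair $D, D' \in \ccc'$ also lies in $\ccc$, the hypothesis that $\gamma$ is an Apollonian curve for $(C_0, C_1, C_2)$ yields $\gamma(\alpha(D)) - \gamma(\alpha(D')) \in \ell^{1/2}(s(D,D'))$ with no further work.

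The crux is the angular containment. Because the circles of $\ccc'$ are externally tangent to the proper circle $C_0$, the tangent point of each with $C_0$ lies on the ray from $c_0$ through its center, so $\alpha(D)$ is exactly the angular coordinate (measured from $c_0$) of the point where $D$ meets $C_0$. The successor $s(D,D')$ of a tangent pair occupies the bounded curvilinear triangle cut out by $C_0, D, D'$, so it meets $C_0$ at a point of the arc strictly between the tangent points of $D$ and $D'$; equivalently, $\alpha(s(D,D'))$ lies strictly between $\alpha(D)$ and $\alpha(D')$. I then induct on the construction of $\ccc'$: the base circles $C, C'$ have angles at the two endpoints of $[\alpha(C),\alpha(C')]$, and whenever a successor $s(D,D')$ is added, the inductive hypothesis places $\alpha(D)$ and $\alpha(D')$ in $[\alpha(C),\alpha(C')]$, so the betweenness above places $\alpha(s(D,D'))$ there as well. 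Hence $\alpha(D) \in [\alpha(C),\alpha(C')]$ for all $D \in \ccc'$, which is precisely what makes the domain and every evaluation point $\alpha(D)$ fall inside $[\alpha(C),\alpha(C')]$. This betweenness property is the one genuinely geometric step and the place I expect to argue most carefully, though it is elementary given that $s(D,D')$ is the Soddy circle in the bounded region of the triple $(C_0,D,D')$. With the three ingredients in hand, $\gamma|_{[\alpha(C),\alpha(C')]}$ satisfies Definition~\ref{d.appcurve} for $(C_0,C,C')$, completing the proof.
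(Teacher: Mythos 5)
Your proof is correct, and it supplies exactly the argument the paper leaves implicit by stating this as an observation with no proof: minimality gives $\ccc'\subseteq\ccc$, and the nesting of tangency points on $C_0$ gives the angular containment, after which the defining condition is inherited verbatim. Nothing further is needed.
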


To prove the existence and uniqueness of Apollonian curves, we will need the following observation, which is easy to verify from the fact that no circle lying inside the region bounded by $C_0,C_1,C_2$ and tangent to $C_0$ has interior disjoint from the family $\ccc$:
\begin{observation}
\label{o.dense}
$\alpha(\ccc)$ is dense in the interval $[\alpha(C_1),\alpha(C_2)]$. \qed
\end{observation}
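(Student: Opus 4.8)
The plan is to argue by contradiction, exploiting the covering property quoted just before the statement: every circle tangent to $C_0$ and contained in the curvilinear triangle $R$ bounded by $C_0,C_1,C_2$ must have interior meeting the union of the disks in $\ccc$. Recall that $\alpha(C)$ records the angular position on $C_0$ of the point where $C$ touches $C_0$, and that every circle of $\ccc$ is externally tangent to $C_0$ along the boundary arc of $R$, so $\alpha(\ccc) \subset [\alpha(C_1),\alpha(C_2)]$ with both endpoints attained. Suppose density failed; then there is an open subinterval $(\theta_1,\theta_2) \subset (\alpha(C_1),\alpha(C_2))$ disjoint from $\alpha(\ccc)$. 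Write $p$ for the point of $C_0$ at the midpoint angle $\theta=\tfrac12(\theta_1+\theta_2)$ and set $\delta = \tfrac12(\theta_2-\theta_1)>0$, so that every circle of $\ccc$ meets $C_0$ at angular distance at least $\delta$ from $p$.

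The key quantitative step is a radius-free separation estimate. Let $r_0$ denote the radius of $C_0$. I claim that if $D$ is any circle externally tangent to $C_0$ at angular distance $\Delta$ from $p$, then the Euclidean distance from $p$ to $D$ is at least $r_0(1-\cos\Delta)$, regardless of the radius $\rho$ of $D$. This follows from the law of cosines: with $d$ the center of $D$ (at distance $r_0+\rho$ from $c_0$ in a direction making angle $\Delta$ with $p$), one computes $|d-p|^2-\rho^2 = 2r_0(r_0+\rho)(1-\cos\Delta)$, and since $|d-p|+\rho \le (2r_0+\rho)+\rho = 2(r_0+\rho)$, dividing gives $|d-p|-\rho \ge r_0(1-\cos\Delta)$. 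As $1-\cos\Delta$ is increasing on $[0,\pi]$ and every circle of $\ccc$ has $\Delta \ge \delta$, the open ball $B_\eta(p)$ with $\eta = r_0(1-\cos\delta) > 0$ is disjoint from every disk of $\ccc$.

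To reach the contradiction I would then exhibit a test circle. Because $p$ lies in the interior of the $C_0$-arc of $\partial R$, a circle $C$ externally tangent to $C_0$ at $p$ on the $R$-side is contained in $R$ once its radius is small enough, as locally $\partial R$ is just the arc of $C_0$ through $p$. Choosing the radius of $C$ below $\eta/2$ forces $C \subset B_\eta(p)$, so the interior of $C$ is disjoint from every disk of $\ccc$. This contradicts the covering fact, so no gap interval can exist and density follows.

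The main obstacle is the separation estimate and, in particular, its uniformity in the radius $\rho$ of the competing circle: a priori a large circle of $\ccc$ tangent to $C_0$ far from $p$ might still reach back toward $p$, but the law-of-cosines computation shows the lower bound $r_0(1-\cos\Delta)$ is independent of $\rho$, which is exactly what lets a single ball $B_\eta(p)$ work simultaneously against all of $\ccc$. The only remaining point requiring care is checking that a sufficiently small test circle stays inside $R$, which is immediate from $p$ being interior to the bounding arc.
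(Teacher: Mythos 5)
Your argument is correct and is exactly the route the paper intends: the Observation is left as an easy consequence of the stated covering fact (no circle tangent to $C_0$ inside the region bounded by $C_0,C_1,C_2$ has interior disjoint from $\ccc$), and you implement that hint rigorously, with the law-of-cosines computation correctly yielding the radius-uniform separation bound $r_0(1-\cos\Delta)$ that makes the contradiction go through. No gaps.
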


\noindent We can now prove the existence and uniqueness of Apollonian curves.

\begin{theorem}
\label{t.appcurve}
  For any pairwise 
  tangent ordered triple of circles $(C_0,C_1,C_2)$, there is an associated Apollonian curve $\gamma$, which is unique up to translation and scaling.
\end{theorem}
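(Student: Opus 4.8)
The plan is to build $\gamma$ by a self-similar recursion on the dense parameter set $\alpha(\ccc)$ (dense by Observation~\ref{o.dense}) and then extend by continuity. Uniqueness will come essentially for free, because the subdivision directions $\ell^{1/2}(s(C,C'))$ are intrinsic to the circles and do not depend on the curve, so the entire curve is pinned down by its two endpoint values $\gamma(\alpha(C_1))$ and $\gamma(\alpha(C_2))$. These endpoints are constrained only by $\gamma(\alpha(C_1))-\gamma(\alpha(C_2))\in\ell^{1/2}(s(C_1,C_2))$; a pair of points in $\R^2$ has four real degrees of freedom, and this single direction constraint leaves three, matching the three parameters of the group $z\mapsto a z+\mathbf b$ ($a\in\R$, $\mathbf b\in\R^2$) of translations and scalings, which acts transitively on the nondegenerate allowed endpoint pairs. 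Thus it suffices to show that, once the endpoints are fixed, every interior node value is forced, and that the resulting function extends continuously.

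For the recursion, consider a tangent pair $(C,C')\in\ccc$ whose values $P=\gamma(\alpha(C))$ and $P'=\gamma(\alpha(C'))$ are already determined, and write $D=s(C,C')$ for the splitting circle. By Observation~\ref{o.subcurve} the sub-pairs are $(C,D)$ and $(D,C')$, so Definition~\ref{d.appcurve} forces the splitting point $Q=\gamma(\alpha(D))$ to satisfy $P-Q\in\ell^{1/2}(s(C,D))$ and $Q-P'\in\ell^{1/2}(s(D,C'))$; that is, $Q$ is the intersection of the line through $P$ of direction $\ell^{1/2}(s(C,D))$ with the line through $P'$ of direction $\ell^{1/2}(s(D,C'))$. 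Matching notation with Proposition~\ref{p.bounds} (base circle $C_0$, with $C_4=s(C,D)$ and $C_5=s(D,C')$), the direction $\ell^{1/2}(C_i)$ is the line making angle $\tfrac12\alpha(C_i)$ with the $x$-axis, so the angle between these two lines is $\tfrac12|\alpha(C_4)-\alpha(C_5)|$. Since $c_4$ and $c_5$ lie on opposite angular sides of $c_3$ as seen from $c_0$, this equals $\tfrac12(\beta_4+\beta_5)$ with $\beta_4=\angle c_4c_0c_3$ and $\beta_5=\angle c_5c_0c_3$, and item~\eqref{i.maxsucc} of Proposition~\ref{p.bounds} gives $\beta_4,\beta_5<\tfrac\pi2$, hence $\tfrac12(\beta_4+\beta_5)\in(0,\tfrac\pi2)$. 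The two lines are therefore transversal, $Q$ exists and is unique, and recursing defines $\gamma$ uniquely on all of $\alpha(\ccc)$.

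It then remains to prove continuity, for which I would establish a uniform contraction. In the triangle $PQP'$ the interior angle at $Q$ equals $\pi-\tfrac12(\beta_4+\beta_5)$ (the supplement of the line angle just computed, since $C_4,C_5$ straddle $D$), so the law of sines gives the child chords $|PQ|=|PP'|\,\sin(\beta_5/2)/\sin\!\big(\tfrac{\beta_4+\beta_5}{2}\big)$ and $|QP'|=|PP'|\,\sin(\beta_4/2)/\sin\!\big(\tfrac{\beta_4+\beta_5}{2}\big)$. The balance bound, item~\eqref{i.closesucc} of Proposition~\ref{p.bounds} ($\beta_4,\beta_5$ within a factor $2$), together with \eqref{i.maxsucc}, forces both ratios below a universal constant $\lambda<1$, so the maximal chord length at recursion depth $n$ decays like $\lambda^n$. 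The piecewise-linear interpolants of the node values are then uniformly Cauchy and converge to a continuous $\gamma$ satisfying the defining relation of Definition~\ref{d.appcurve} along every pair produced by the subdivision. Finally, a M\"obius transformation carrying $C_0$ to a line identifies $\ccc$ with a family of Ford-type circles, under which tangency corresponds to Farey adjacency; since the subdivision produces exactly the Stern--Brocot neighbors, the tangent pairs in $\ccc$ are precisely the subdivision-adjacent pairs, and the defining relation holds for all of them.

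I expect the contraction estimate to be the main obstacle. The crucial and slightly delicate point is the sign bookkeeping showing that $C_4$ and $C_5$ lie on opposite sides of $D$: this makes the angle at the new vertex obtuse and converts what could a priori be an expansion into a genuine contraction, with the uniformity of $\lambda<1$ resting on the balance estimate~\eqref{i.closesucc}. A secondary point needing care is the identification of tangent pairs with subdivision-adjacent pairs, which is what guarantees that the continuous limit verifies Definition~\ref{d.appcurve} in full rather than only along the recursion.
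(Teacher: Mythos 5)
Your proposal is correct and follows essentially the same route as the paper: fix the two endpoints up to translation and scaling, recursively determine each splitting point as the intersection of the two constraint lines, and prove continuity on the dense set $\alpha(\ccc)$ via a law-of-sines contraction estimate drawn from parts \eqref{i.maxsucc} and \eqref{i.closesucc} of Proposition~\ref{p.bounds}. The only differences are that you make explicit two points the paper leaves implicit, namely the transversality of the two lines defining each splitting point and the fact that every tangent pair in $\ccc$ is subdivision-adjacent, so the defining relation need only be checked along the recursion.
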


\begin{proof}
  The choice of the points $\gamma(\alpha(C_1))$ and $\gamma(\alpha(C_2))$ is determined uniquely up to translation and scaling by the constraint that $\gamma(\alpha(C_1))-\gamma(\alpha(C_2))$ is a real multiple of $v^{1/2}(s(C_1,C_2))$.  This choice then determines the image $\gamma(\alpha(C))$ for all circles $C\in \ccc$ recursively: for any tangent circles $C^1,C^2 \in  \ccc$ with $C^3:=s(C^1,C^2)$ the constraints 
\begin{align*}
\gamma(\alpha(C^1))-\gamma(\alpha(C^3)) \in \ell^{1/2}(s(C^1,C^3)) \\
\gamma(\alpha(C^2))-\gamma(\alpha(C^3)) \in \ell^{1/2}(s(C^2,C^3))
\end{align*}
determine $\gamma(\alpha(C^3))$ uniquely given $\gamma(\alpha(C^1))$ and $\gamma(\alpha(C^2))$.  To show that there is a unique and well-defined curve $\gamma$, by Observation~\ref{o.dense} it is enough to show that $\gamma$ is a continuous function on the set $\alpha(\ccc)$.  For this it suffices to find an absolute constant $\beta<1$ for which
\begin{equation}
\label{l.dbound}
      \abs{\gamma(\alpha(C^1))-\gamma(\alpha(s(C^1,C^2)))}\leq
\beta \abs{\gamma(\alpha(C^1))-\gamma(\alpha(C^2))}
\end{equation}
for tangent circles $C^1,C^2\in \ccc$, as this implies, for example, that by taking successors $k$ times, we can find a circle $C'\in \ccc$ such that all points in $\gamma([\alpha(C^1),\alpha(C')])$ lie within $\beta^k\abs{\gamma(\alpha(C^1))-\gamma(\alpha(C^2))}$ of $\gamma(\alpha(C^1))$.  We get the absolute constant $\beta$ from an application of the law of sines to the triangle with vertices $p_1=\gamma(\alpha(C^1)),p_2=\gamma(\alpha(C^2)),p_3=\gamma(\alpha(s(C^1,C^2)))$:  part  \ref{i.closesucc} of Proposition \ref{p.bounds} implies that $\theta:=\angle p_3p_1p_2\geq \frac 1 2 \angle p_3p_2p_1$; the Law of Sines then implies that line \eqref{l.dbound} holds with $\beta=\frac{\sin(2\theta)}{\sin(3\theta)}$, which is $\leq \frac 2 3$ always since part \ref{i.maxsucc} of Proposition \ref{p.bounds} implies that $\theta \leq \frac \pi 2$.
\end{proof}

\begin{theorem}
  The image of an Apollonian curve $\gamma$ corresponding to $(C_0,C_1,C_2)$ 
    has a unique tangent line at each point $\gamma(\alpha)$. This line is at angle $\alpha/2$ to the positive $x$-axis.  In particular, $\gamma$ is a convex curve.
\label{t.appconv}
\end{theorem}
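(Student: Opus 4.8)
The plan is to reduce the claim to a statement about the \emph{directions of chords} of $\gamma$, and to show that for nearby parameters every secant through $\gamma(\alpha)$ is squeezed onto the line at angle $\alpha/2$. The first step records the direction of an \emph{elementary chord}. By Definition~\ref{d.appcurve}, for tangent $C,C'\in\ccc$ the vector $\gamma(\alpha(C'))-\gamma(\alpha(C))$ lies on $\ell^{1/2}(s(C,C'))$, i.e. on the line at angle $\alpha(s(C,C'))/2$ to the $x$-axis. The geometric input I need is that the successor angle is sandwiched: if $\alpha(C)<\alpha(C')$ then $\alpha(C)<\alpha(s(C,C'))<\alpha(C')$, because the successor circle sits in the bounded region cut out by $C_0,C,C'$ and hence meets $C_0$ at a point of the arc between the tangency points of $C$ and $C'$. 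Thus every elementary chord has direction angle strictly inside $(\alpha(C)/2,\alpha(C')/2)$.

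Next I would run an induction over the generations defining $\ccc$ (generation $0$ being $\{C_1,C_2\}$, each later generation adjoining the successors of the tangent pairs already present). The inductive claim is that the points $\gamma(\alpha(C))$ for $C$ in a given generation, listed in order of $\alpha$, are the vertices of a convex polygonal arc whose successive edges have strictly increasing direction angles, all lying in $(\alpha(C_1)/2,\alpha(C_2)/2)$, and with the edge between consecutive (necessarily tangent) circles $E,E'$ having direction exactly $\alpha(s(E,E'))/2$. Passing to the next generation replaces the edge $\gamma(\alpha(E))\gamma(\alpha(E'))$ by the two edges meeting at the new vertex $\gamma(\alpha(s(E,E')))$; by the sandwich property the two new edge directions straddle the old one and remain inside the sub-sector $(\alpha(E)/2,\alpha(E')/2)$, so the global strict monotonicity of edge directions---and hence convex position---is preserved. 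Here part~\ref{i.maxsucc} of Proposition~\ref{p.bounds} keeps each refinement step narrow, and since the total spread $\alpha(C_2)-\alpha(C_1)=\angle c_1c_0c_2<\pi$ is an angle of the triangle $c_0c_1c_2$, ``increasing direction angle'' is unambiguous throughout.

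Finally, by Observation~\ref{o.dense} the parameters $\alpha(\ccc)$ are dense, so for any $\alpha\in[\alpha(C_1),\alpha(C_2)]$ I can choose $\beta_1\uparrow\alpha$ and $\beta_2\downarrow\alpha$ in $\alpha(\ccc)$; continuity of $\gamma$ together with the previous step forces every secant of $\gamma$ near $\gamma(\alpha)$ to have direction angle trapped in $[\beta_1/2,\beta_2/2]$, which shrinks to $\alpha/2$. Hence both one-sided tangents exist and agree, giving a unique tangent line at $\gamma(\alpha)$ of angle $\alpha/2$. Since this angle increases monotonically with $\alpha$ through a total turn less than $\pi/2$, the curve $\gamma$ is convex.

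I expect the main obstacle to be the refinement step of the induction: verifying that adjoining the successor vertex genuinely \emph{subdivides} an edge into two edges turning the same way, rather than producing a reflex vertex or a direction reversal. This is exactly where the sandwich property $\alpha(C)<\alpha(s(C,C'))<\alpha(C')$ does the essential work, and where the narrowness of the sectors (so that comparing chords by their angles mod $\pi$ determines orientation unambiguously) must be invoked; the remaining bookkeeping---that each edge, oriented by increasing parameter, points into the common half-plane of the full sector---is then routine once the monotonicity of the edge angles is established.
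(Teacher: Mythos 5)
Your proposal is correct and follows essentially the same route as the paper's (very terse) proof: both derive the tangent direction $\alpha/2$ from the chord condition in Definition~\ref{d.appcurve} together with the density of $\alpha(\ccc)$ (Observation~\ref{o.dense}) and continuity of $\alpha\mapsto\alpha/2$. Your write-up simply makes explicit the details the paper leaves implicit --- the sandwich property $\alpha(C)<\alpha(s(C,C'))<\alpha(C')$ and the generation-by-generation monotonicity of chord directions --- which is a faithful elaboration rather than a different argument.
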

\begin{proof}
Observation~\ref{o.dense} and Definition \ref{d.appcurve} give that for any $C\in \ccc$, there is a unique line tangent to the image of $\gamma$ at $\gamma(\alpha(C))$, which is at angle $\alpha(C)/2$ to the $x$-axis.  Together with another application of Observation~\ref{o.dense} and the fact that $\frac{\alpha}{2}$ is a continuous function of $\alpha$, this gives that the image $\gamma$ has a unique tangent line at angle $\frac{\alpha}{2}$ to the $x$-axis at any point $\gamma(\alpha)$. \end{proof}

\begin{definition}
  The \emph{Apollonian triangle} corresponding to an unordered triple of externally tangent circles $C_1,$ $C_2,$ $C_3$ and circle $C_0$ externally tangent to each of them is defined as the bounded region (unique up to translation and scaling) enclosed by the images of the Apollonian curves $\gamma_{12}$, $\gamma_{23}$, $\gamma_{31}$ corresponding to the triples $(C_0,C_1,C_2)$, $(C_0,C_2,C_3)$, $(C_0,C_3,C_1)$ such that $\gamma_{ij}(\alpha(C_j))=\gamma_{jk}(\alpha(C_j))$ for each $\{i,j,k\}=\{1,2,3\}$.
\label{d.tri}
\end{definition}

Note that Theorem \ref{t.appcurve} implies that each triple $\{C_1,C_2,C_3\}$ of pairwise tangent circles corresponds to an Apollonian triangle $\ttt$ which is unique up to translation and scaling.  Theorem \ref{t.appconv} implies that the curves $\gamma_{12},\gamma_{23},\gamma_{31}$ do not intersect except at their endpoints,
and that $\ttt$ is strictly contained in the triangle with vertices $\gamma_{12}(C_2), \gamma_{23}(C_3), \gamma_{31}(C_1)$.  Another consequence of Theorem \ref{t.appconv} is that any two sides of an Apollonian triangle have the same tangent line at their common vertex.  Thus, the interior angles of an Apollonian triangle are $0$.

An Apollonian triangle is \emph{proper} if $C_4$ is smaller than each of $C_1,C_2,C_3$, i.e., if $C_4$ is the successor of $C_1,C_2,C_3$, and all Apollonian triangles appearing in our solutions to the sandpile PDE will be proper.

We also define a degenerate version of an Apollonian triangle:
\begin{definition}
  The \emph{degenerate Apollonian triangle} corresponding to the pairwise tangent circles $(C_1,C_2,C_3)$ is the compact region (unique up to translation and scaling) enclosed by the image of the Apollonian curve $\gamma$ corresponding to $(C_1,C_2,C_3)$, and the tangent lines to $\gamma$ at its endpoints $\gamma(\alpha(C_2))$ and $\gamma(\alpha(C_3))$.
\end{definition}

Proper Apollonian triangles (and their degenerate versions) are the building blocks of Apollonian triangulations, the fractals that support piecewise-quadratic solutions to the sandpile PDE.  Recall that $\aaa^-(C_1,C_2,C_3)$ denotes the smallest set of circles containing the circles $C_1,C_2,C_3$ and closed under adding successors of pairwise tangent triples.  To each circle $C \in \aaa^-(C_1,C_2,C_3) \setminus \{C_1,C_2,C_3\}$ we associate an Apollonian triangle $\ttt_C$ corresponding to the unique triple $\{C^1,C^2,C^3\}$ in  $\aaa^-(C_1,C_2,C_3)$ whose successor is $C$.

\begin{definition}
  The \emph{Apollonian triangulation} associated to a triple $\{C_1,C_2,C_3\}$ of externally tangent circles is a union of (proper) Apollonian triangles $\ttt_C$ corresponding to each circle $C\in \aaa^-(C_1,C_2,C_3)\stm \{C_1,C_2,C_3\}$, together with degenerate Apollonian triangles $\ttt_C$ for each $C=C_1,C_2,C_3$, such that disjoint circles correspond to disjoint Apollonian triangles, and such that for tangent circles $C,C'$ in $\aaa^-(C_1,C_2,C_3)$ where $r(C')\leq r(C)$, we have that $\ttt_{C'}$ and $\ttt_{C}$ intersect at a vertex of $\ttt_{C'}$, and that their boundary curves meet at right angles.
\end{definition}
\noindent Figure \ref{f.sandfractal} shows an Apollonian triangulation, excluding the three degenerate Apollonian triangles on the outside.

\begin{remark}
\label{r.meetingpoint}
By Theorem \ref{t.appconv} and the fact that centers of tangent circles are separated by an angle $\pi$ about their tangency point, the right angle requirement is equivalent to requiring that the intersection of $\ttt_{C'}$ and $\ttt_C$ occurs at the point $\gamma(\alpha(C'))$ on an Apollonian boundary curve $\gamma$ of $\ttt_C$.
\end{remark}

\section{Geometry of Apollonian curves}
\label{s.arec}
In this section, we will give a circle-free geometric description of Apollonian curves.  This will allow us to easily deduce geometric bounds necessary for our construction of piecewise-quadratic solutions to work.  

Recall that by Theorem \ref{t.appconv}, each pair of boundary curves of an Apollonian triangle have a common tangent line where they meet.  Denoting the three such tangents the \emph{spline lines} of the Apollonian triangle, Remark \ref{r.dual}, and Lemma \ref{L.median} give us the following:
\begin{lemma}
The spline lines of an Apollonian triangle with vertices $v_1,v_2,v_3$ are the median lines of the triangle $\tri v_1v_2v_3$, and thus meet at a common point, which is the centroid of $\tri v_1v_2v_3$\qed.
\label{l.spline}
\end{lemma}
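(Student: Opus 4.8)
The plan is to convert the entire statement into a statement about directions measured from the base circle $C_0$, and then extract the median relation from Lemma~\ref{L.median} together with the involution of Remark~\ref{r.dual}. Throughout, let $v_i$ denote the vertex associated to $C_i$ (the common endpoint of the two boundary curves through it), let $x_i$ be the point where $C_i$ touches $C_0$, and let $y_{ij}$ be the point where the successor $s(C_i,C_j)$ touches $C_0$. First I would record the two families of directions involved. By Theorem~\ref{t.appconv}, the two Apollonian curves meeting at $v_i$ share the tangent line at angle $\alpha(C_i)/2$; since for external tangency $x_i-c_0$ lies on the ray $c_0c_i$ and so has argument $\alpha(C_i)$, the spline line at $v_i$ is the line through $v_i$ in the direction of $(x_i-c_0)^{1/2}=v^{1/2}(C_i)$. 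On the other hand, Definition~\ref{d.appcurve} applied to the endpoints of $\gamma_{ij}$ gives $v_i-v_j\in\ell^{1/2}(s(C_i,C_j))$, so the side $v_iv_j$ of $\tri v_1v_2v_3$ points in the direction of $(y_{ij}-c_0)^{1/2}$. Thus the three side directions of $\tri v_1v_2v_3$ are the half-arguments of $y_{12},y_{23},y_{31}$, while the three spline directions are the half-arguments of $x_1,x_2,x_3$.

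Since a triangle is determined up to translation and scaling by its three side directions, each of its median directions is a function of $y_{12},y_{23},y_{31}$ alone; and because the spline line at $v_3$ already passes through $v_3$, it suffices to show that the median of $\tri v_1v_2v_3$ issuing from $v_3$ points in the direction $(x_3-c_0)^{1/2}$. To compute this median direction I would invoke Lemma~\ref{L.median}. Let $D_{12},D_{23},D_{31}$ be the unique pairwise externally tangent triple externally tangent to $C_0$ at $y_{12},y_{23},y_{31}$, which exists by Proposition~\ref{p.ydet}; only their tangency points, not the circles themselves, will matter. Applying Lemma~\ref{L.median} to the tangent pair $D_{23},D_{31}$ with base circle $C_0$, their two Soddy circles are the third member $D_{12}$ (touching $C_0$ at $y_{12}$) and the successor $E:=s(D_{23},D_{31})$ (touching $C_0$ at some point $w$). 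In the normalization of the proof of Lemma~\ref{L.median} the successor plays the role of the circle $C_3$ whose line is the median, so the lemma asserts that the line in direction $(w-c_0)^{1/2}$ is the median, from the vertex where the sides in directions $(y_{23}-c_0)^{1/2},(y_{31}-c_0)^{1/2}$ meet, of the triangle whose opposite side points in the direction $(y_{12}-c_0)^{1/2}$ of $D_{12}$. That triangle is similar to $\tri v_1v_2v_3$, so this is exactly the median from $v_3$.

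It remains to identify $w$ with $x_3$, and this is precisely where Remark~\ref{r.dual} enters: the assignment carrying the tangency points $\{x_1,x_2,x_3\}$ of $C_1,C_2,C_3$ to the successor tangency points $\{y_{12},y_{23},y_{31}\}$ is an involution, so the successors of the canonical triple $D_{12},D_{23},D_{31}$ touch $C_0$ back at $\{x_1,x_2,x_3\}$. By the cyclic symmetry of the construction the successor $E=s(D_{23},D_{31})$ of the two members indexed by pairs containing $3$ touches $C_0$ at $x_3$, i.e.\ $w=x_3$. Hence the median from $v_3$ points in the direction $(x_3-c_0)^{1/2}$, which is the spline direction at $v_3$; as both lines pass through $v_3$ they coincide. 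Running the identical argument at $v_1$ and $v_2$ shows all three spline lines are medians of $\tri v_1v_2v_3$, and three medians meet at the centroid.

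I expect the main obstacle to be bookkeeping rather than geometry: one must match the asymmetric roles in Lemma~\ref{L.median} (the successor's line being the median, the other Soddy circle's line being the opposite side) against the correct branch of the involution in Remark~\ref{r.dual}, so that the median direction comes out to be $(x_3-c_0)^{1/2}$ and the opposite-side direction $(y_{12}-c_0)^{1/2}$, rather than the reverse. Pinning down the pairing \emph{successor} $\leftrightarrow$ \emph{median} $\leftrightarrow x_3$ versus \emph{non-successor Soddy circle} $\leftrightarrow$ \emph{opposite side} $\leftrightarrow y_{12}$ is the delicate point, and it is exactly the involution of Remark~\ref{r.dual} that certifies the successor of $(C_0,D_{23},D_{31})$ returns to $x_3$; a secondary nuisance is to note that the whole half-argument description is insensitive to the choice of complex square roots and extends by continuity to the degenerate cases where $C_0$ or some successor is a line.
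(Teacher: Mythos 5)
Your proof is correct and follows essentially the same route as the paper, whose entire argument for this lemma is the citation of Lemma~\ref{L.median} and Remark~\ref{r.dual}; you have simply filled in the details that the paper leaves implicit (translating spline and side directions into half-arguments of tangency points on $C_0$, invoking Proposition~\ref{p.ydet} to get an honest circle triple at the points $y_{ij}$, and using the involution plus the cyclic ordering of the six tangency points to pin the successor of $(C_0,D_{23},D_{31})$ to $x_3$). Your flagged worry about the asymmetric roles in Lemma~\ref{L.median} is in fact harmless, since the median relation is itself an involution (this is the content of Remark~\ref{r.dual}), so either labeling of the two Soddy circles yields a true statement and you have used the one you need.
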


More crucially, Lemma \ref{L.median} allows us to give a circle-free description of Apollonian curves.  Indeed, letting $c$ be the intersection point of the tangent lines to the endpoints $p_1,p_2$ of an Apollonian curve $\gamma$, Lemma \ref{L.median} implies (via Definition \ref{d.appcurve} and Theorem \ref{t.appconv}) that the splitting point $s$ of $\gamma$ is the intersection of the medians from $p_1,p_2$ of the triangle $\tri p_1p_2c$, and thus the centroid of the triangle $\tri p_1p_2c$.  The tangent line to $\gamma$ at $s$ is parallel $p_1p_2$; thus, by Observations \ref{o.subcurve} and \ref{o.dense}, the following recursive procedure determines a dense set of points on the curve $\gamma$ given the triple $(p_1,p_2,c)$:
\begin{enumerate}
\item find the splitting point $s$ as the centroid of $\tri p_1p_2 c$.
\item compute the intersections $c_1,c_2$ of the $p_1c$ and $p_2c$, respectively, with the line through $s$ parallel to $p_1p_2$.
\item carry out this procedure on the triples $(p_1,c_1,s)$ and $(s,c_2,p_2)$.
\end{enumerate}

By recalling that the centroid of a triangle lies 2/3 of the way along each median, the correctness of this procedure thus implies that the ``generalized quadratic B\'ezier curves'' with constant $\frac 1 3$ described by Paoletti in his thesis \cite{pathesis} are Apollonian curves.  Combined with Lemma \ref{l.spline}, this procedure also gives a way of enumerating barycentric coordinates for a dense set of points on each of the boundary curves of an Apollonian triangle, in terms of its 3 vertices.  Thus, in particular, all Apollonian triangles are equivalent under affine transformations. Conversely, since Proposition \ref{p.ydet} implies that any 3 vertices in general position have a corresponding Apollonian triangle, the affine image of any Apollonian triangle must also be an Apollonian triangle.  In particular:

\begin{theorem}
  For any three vertices $v_1,v_2,v_3$ in general position, there is a unique Apollonian triangle whose vertices are $v_1,v_2,v_3$.\qed
\label{t.Auni}
\end{theorem}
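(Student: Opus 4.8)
The plan is to deduce the theorem from the circle-free recursive description of Apollonian curves developed just above, together with the realizability statement of Proposition~\ref{p.ydet}. Uniqueness will come from the fact that the recursive construction is purely affine and is driven entirely by the three vertices; existence will come from producing a genuine circle quadruple whose Apollonian triangle, after a translation and scaling, has the prescribed vertices.

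For uniqueness, suppose $\ttt$ is any Apollonian triangle with vertices $v_1,v_2,v_3$. By Lemma~\ref{l.spline} its three spline lines are the medians of $\tri v_1v_2v_3$, so the boundary curve joining $v_i$ to $v_j$ is the Apollonian curve whose endpoint tangents are the medians from $v_i$ and $v_j$; these meet at the centroid $g$. The recursive rule (splitting point $=$ centroid of $\tri v_iv_jg$, then recursion on the two halves, whose correctness is exactly Lemma~\ref{L.median} read through Definition~\ref{d.appcurve} and Theorem~\ref{t.appconv}) then pins down a dense set of points of this curve from the data $(v_i,v_j,g)$ alone, and convexity (Theorem~\ref{t.appconv}) extends this to the whole curve. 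Since $g$ and the medians are determined by $v_1,v_2,v_3$, all three boundary curves, and hence $\ttt$, are determined. Thus at most one Apollonian triangle has these vertices.

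For existence I would build a correctly-shaped triangle directly from Proposition~\ref{p.ydet}. The recursive rule above uses only centroids, line intersections, and parallels, so it commutes with every affine map; in particular the barycentric coordinates (relative to the vertex triple) of the points it produces are the same for every Apollonian triangle, which is why all Apollonian triangles are affinely equivalent. To realize one with the correct shape, I fix a reference circle $C_0$ with center $c_0$ and place three points on it at the angles $2\theta_1,2\theta_2,2\theta_3$, where $\theta_i$ is the direction of $v_i-g$ and $g$ is the centroid of $\tri v_1v_2v_3$. Proposition~\ref{p.ydet} produces a unique externally tangent triple $C_1,C_2,C_3$ tangent to $C_0$ there, and the quadruple $(C_0;C_1,C_2,C_3)$ defines an Apollonian triangle $\ttt_0$ via Definition~\ref{d.tri}. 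By Theorem~\ref{t.appconv} its spline line at the $C_i$-vertex makes angle $\alpha(C_i)/2=\theta_i$ with the axis, so by Lemma~\ref{l.spline} the vertex triangle of $\ttt_0$ has the same three median directions as $\tri v_1v_2v_3$.

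The step I expect to be the crux is converting this agreement of median directions into an exact match of vertices. The key elementary fact is that, with the centroid placed at the origin, the median from a vertex points along that vertex, so prescribing the three median directions together with the relation $\text{(sum of vertices)}=0$ determines the triangle up to a single real scaling; hence the vertex triangle of $\ttt_0$ is homothetic to $\tri v_1v_2v_3$. Since an Apollonian triangle is in any case only defined up to translation and scaling, I apply the matching homothety to place its vertices exactly at $v_1,v_2,v_3$, which yields the desired Apollonian triangle and, with the uniqueness above, completes the proof. I would still check that the three prescribed points on $C_0$ are distinct and correctly cyclically ordered, which is guaranteed by the general position of $v_1,v_2,v_3$ (this makes $\theta_1,\theta_2,\theta_3$ distinct), so that Proposition~\ref{p.ydet} genuinely returns an externally tangent triple, and that orientation is accommodated by allowing the scaling factor $a$ in $\gamma'=a\gamma+\mathbf b$ to be negative.
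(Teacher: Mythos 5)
Your proposal is correct and follows essentially the same route as the paper: uniqueness via Lemma~\ref{l.spline} plus the centroid-based recursive description of the boundary curves (which pins down the triangle from its vertices alone), and existence by using Proposition~\ref{p.ydet} to realize a circle quadruple whose Apollonian triangle has the prescribed median directions, then matching by a homothety. The only difference is that you spell out the existence half (choosing tangency points at angles $2\theta_i$ and checking that median directions determine the vertex triangle up to scaling), which the paper leaves implicit in the sentence preceding the theorem.
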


Another consequence of the affine equivalence of Apollonian triangles is conformal \emph{inequivalence} of Apollonian triangulations: suppose $\phi : \sss \to \sss'$ is a conformal map between Apollonian triangulations which preserves the incidence structure. Let $\ttt$ and $\ttt'$ be their central Apollonian triangles, and $\alpha : \ttt \to \ttt'$ the corresponding affine map.  By Remark \ref{r.meetingpoint}, the points  on $\partial \ttt$ computed by the recursive procedure above are the points at which $\ttt$ is incident to other Apollonian triangles of $\sss$; thus, $\phi = \alpha$ on a dense subset of $\partial \ttt,$ and therefore on all of $\partial \ttt$. Since the real and imaginary parts of $\phi$ and $\alpha$ are harmonic, the maximum principle implies that $\phi = \alpha$ on $\ttt,$ and therefore on $\sss$ as well, giving that $\sss$ and $\sss'$ are equivalent under a Euclidean similarity transformation.   We stress that in general, even though $\ttt$ and $\ttt'$ are affinely equivalent, nonsimilar triangulations are \emph{not} affinely equivalent, as can be easily be verified by hand.

It is now easy to see from the right-angle requirement for Apollonian triangulations that the Apollonian triangulation associated to a particular triple of circles must be also be unique up to translation and scaling: by Remark~\ref{r.meetingpoint}, the initial choice of translation and scaling of the three degenerate Apollonian triangles determines the rest of the figure.  (On the other hand, it is not at all obvious that Apollonian triangulations exist. This is proved in Theorem~\ref{t.sandfrac} below.)  Hence by Proposition \ref{p.ydet}, an Apollonian triangulation is uniquely determined by the three pairwise intersection points of its three degenerate triangles:
\begin{theorem}
  For any three vertices $v_1,v_2,v_3$, there is at most one Apollonian triangulation for which the set of vertices of its three degenerate Apollonian triangles is $\{v_1,v_2,v_3\}$.\qed
\end{theorem}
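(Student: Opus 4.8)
The plan is to prove uniqueness by reconstructing the entire triangulation rigidly from the three pairwise meeting points $v_1,v_2,v_3$ of its degenerate triangles, in two stages: first that the three degenerate triangles determine everything else, and then that $v_1,v_2,v_3$ determine the three degenerate triangles. Throughout I assume a triangulation with these meeting points exists and show it is forced.

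For the first stage I would order the circles of the downward packing $\aaa^-(C_1,C_2,C_3)$ by decreasing radius and induct. Every circle $C$ other than the three generators is the successor of a pairwise tangent triple of strictly larger circles, so $\ttt_C$ is adjacent to three triangles placed at earlier stages. By Remark~\ref{r.meetingpoint}, the right-angle requirement forces $\ttt_C$ to meet each neighbour at the prescribed point $\gamma(\alpha(C))$ on that neighbour's boundary curve; since $\ttt_C$ is a proper Apollonian triangle with exactly three vertices, these three prescribed points are precisely its vertices, and by Theorem~\ref{t.Auni} there is at most one Apollonian triangle with those vertices. Hence $\ttt_C$ is determined, and inductively the whole triangulation is determined by the three degenerate triangles $\ttt_{C_1},\ttt_{C_2},\ttt_{C_3}$; in particular two triangulations with identical degenerate triangles coincide.

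For the second stage I would pass to the generating circles. The three caps correspond to the pairwise externally tangent triple $C_1,C_2,C_3$, which is itself externally tangent to its successor circle $C_4$ at three points. By Proposition~\ref{p.ydet} (``a triple is determined by its intersection points with its successor'') these three points rigidify $C_1,C_2,C_3$; in the triangulation they appear as the three vertices of the central triangle $\ttt_{C_4}$, which by Theorem~\ref{t.Auni} is in turn pinned down by those vertices. It therefore suffices to recover the central triangle's vertices from $v_1,v_2,v_3$. Writing $a_1,a_2,a_3$ for the three outer apexes of the caps, the rigidity comes from two families of right angles: at each $v_k$ the two incident cap curves cross at a right angle, so the straight cap sides meeting there (which are the curve tangents, by Theorem~\ref{t.appconv}) are orthogonal; and where the central triangle meets a cap, the meeting point is the cap's splitting point, at which the cap's tangent is parallel to the chord joining its two $v$-endpoints while the central triangle's tangent is a median line of its vertex triangle (Lemma~\ref{l.spline}). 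These relations constrain $a_1,a_2,a_3$, hence the splitting points, and Proposition~\ref{p.ydet} ensures the outcome is unique.

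The crux is this second stage: turning the planar data $\{v_1,v_2,v_3\}$ (six real coordinates) together with the intrinsic right-angle incidences into exactly the ``circle and three points on it'' input of Proposition~\ref{p.ydet}, while correctly tracking the translation-and-scaling freedom under which Apollonian curves and triangles are only defined. A parameter count is encouraging---the six coordinates of $v_1,v_2,v_3$ match the moduli of a placed triangulation---but this only suggests finiteness; the genuine rigidity must be extracted from Proposition~\ref{p.ydet}, since the nonlinear system relating the apexes, splitting points, and median directions is one I would not expect to solve directly by hand.
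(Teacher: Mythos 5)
Your two-stage decomposition is exactly the paper's: its proof is the short paragraph preceding the theorem, which (i) uses the right-angle requirement and Remark~\ref{r.meetingpoint} to argue that the placement of the three degenerate triangles determines the rest of the figure, and (ii) invokes Proposition~\ref{p.ydet} to conclude that the three meeting points determine the degenerate triangles. Your first stage is a correct fleshing-out of (i): inducting on radius and pinning each successor triangle by its three prescribed meeting points via Theorem~\ref{t.Auni} is sound.

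The genuine gap is your second stage, which you acknowledge you cannot close. You try to extract the rigidity from the right angles at the $v_k$ together with the splitting-point and median relations against the central triangle; this indeed produces a nonlinear system in the three apexes that is not tractable, and it is not how Proposition~\ref{p.ydet} is meant to enter. The missing observation is that Definition~\ref{d.appcurve} already hands you the circle data directly from the \emph{directions of the sides} of $\triangle v_1v_2v_3$: the two endpoints of the degenerate curve of $\ttt_{C_i}$ are two of the $v$'s, and by Definition~\ref{d.appcurve} their difference lies in $\ell^{1/2}(C_4)$ computed with respect to $C_i$, where $C_4$ is the successor of $C_1,C_2,C_3$. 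Squaring the three side directions therefore recovers the three directions $O_4-O_i$ from the centers $O_i$ of the $C_i$ to the center $O_4$ of $C_4$, i.e.\ the angular positions of the three tangency points of $C_1,C_2,C_3$ on $C_4$. Normalizing $C_4$ to the unit circle (the circle configuration only matters up to translation and scaling), Proposition~\ref{p.ydet} then determines $C_1,C_2,C_3$ uniquely --- that is the precise role of the proposition. The shapes of the three degenerate curves are then fixed up to translation and scaling by Theorem~\ref{t.appcurve}, and each is pinned in the plane by the requirement that its two endpoints be the prescribed pair of $v$'s, which completes the second stage with no nonlinear solving and no appeal to the apex configuration.
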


To ensure that our piecewise-quadratic constructions are well-defined on a convex set, we will need to know something about the area of Apollonian triangles.  Affine equivalence implies that there is a constant $C$ such that the area of any Apollonian triangle is equal to $C\cdot A(T)$ where $T$ is the Euclidean triangle with the same 3 vertices.  In fact we can determine this constant exactly:
\begin{lemma}
  An Apollonian triangle $\ttt$ with vertices $p_1,p_2,p_3$ has area $\frac 4 7 A(T)$ where $A(T)$ is the area of the triangle $T=\tri p_1p_2p_3$.
\label{L.47}
\end{lemma}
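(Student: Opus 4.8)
The plan is to reduce the area of the full Apollonian triangle $\ttt$ to the area trapped between a \emph{single} Apollonian curve and its chord, and to compute the latter by exploiting the affine self-similarity of the recursive construction from Section~\ref{s.arec}. Since each boundary curve of $\ttt$ bulges inward (its interior angles are $0$ and its endpoint tangent lines are the medians, which point into $T$ by Lemma~\ref{l.spline}), we have $\ttt\subset T$ and
\[
A(\ttt)=A(T)-G_{12}-G_{23}-G_{31},
\]
where $G_{ij}$ is the area of the region between the boundary curve $\gamma_{ij}$ and the straight edge $\overline{p_ip_j}$. So it suffices to evaluate each $G_{ij}$ and to check that the three gap regions are disjoint.

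First I would compute the gap for a single curve. Given an Apollonian curve $\gamma$ with endpoints $p_1,p_2$ whose endpoint tangent lines meet at $c$, let $g(p_1,p_2,c)$ denote the area of the region bounded by $\gamma$ and the chord $\overline{p_1p_2}$. Because the recursive description in Section~\ref{s.arec} uses only centroids, lines through prescribed points, and intersections --- all affine-covariant operations --- the curve depends affinely on the data $(p_1,p_2,c)$, so area ratios are affine invariants and $g(p_1,p_2,c)=\mu\,A(\tri p_1p_2c)$ for a universal constant $\mu$. The recursion produces the splitting point $s$ as the centroid of $\tri p_1p_2c$, and the two sub-apices $c_1,c_2$ as the intersections of $\overline{p_1c}$ and $\overline{p_2c}$ with the line through $s$ parallel to $p_1p_2$. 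Decomposing the region under $\gamma$ into the inscribed triangle $\tri p_1sp_2$ and the two sub-gaps gives
\[
\mu\,A(\tri p_1p_2c)=A(\tri p_1sp_2)+\mu\,A(\tri p_1sc_1)+\mu\,A(\tri sp_2c_2).
\]
A short computation in coordinates (placing $p_1,p_2$ on the $x$-axis) yields $A(\tri p_1sp_2)=\tfrac13 A(\tri p_1p_2c)$ and $A(\tri p_1sc_1)=A(\tri sp_2c_2)=\tfrac19 A(\tri p_1p_2c)$, so the identity collapses to $\mu=\tfrac13+\tfrac29\mu$, whence $\mu=\tfrac37$.

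To assemble the answer, note that by Lemma~\ref{l.spline} the endpoint tangent lines of $\gamma_{ij}$ are the median lines from $p_i$ and $p_j$, which meet at the centroid $G$ of $\tri p_1p_2p_3$. Thus the apex $c$ of each boundary curve is $G$, and
\[
G_{ij}=\mu\,A(\tri p_ip_jG)=\tfrac37\cdot\tfrac13 A(T)=\tfrac17 A(T),
\]
using the standard fact that $G$ splits $T$ into three triangles of equal area $\tfrac13 A(T)$. Since each gap $G_{ij}$ lies inside its centroid subtriangle $\tri p_ip_jG$, and these three subtriangles have disjoint interiors and tile $T$, the three gap regions are disjoint; summing gives $A(\ttt)=A(T)-3\cdot\tfrac17 A(T)=\tfrac47 A(T)$.

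The main obstacle is the bookkeeping that makes the reduction valid, rather than the arithmetic: one must verify that each boundary curve actually lies inside the centroid subtriangle $\tri p_ip_jG$ (so that the gaps are genuinely disjoint and their union is exactly $T\stm\ttt$), and that the recursive sub-gaps sit on the correct side of their chords, so that the decomposition identity for $\mu$ holds with the stated coefficients. Both follow from the convexity and inward bulging of Apollonian curves (Theorem~\ref{t.appconv}) together with the explicit positions of $s,c_1,c_2$, but they are the steps that require care.
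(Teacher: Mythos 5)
Your proposal is correct and follows essentially the same route as the paper: both use Lemma~\ref{l.spline} to place the apex of each boundary curve at the centroid of $T$, and both evaluate the gap between each curve and its chord as $\tfrac37$ of the corresponding centroid subtriangle via the recursive centroid/median construction. The only cosmetic difference is that you obtain $\tfrac37$ from the self-similarity fixed-point equation $\mu=\tfrac13+\tfrac29\mu$, whereas the paper sums the equivalent geometric series $\tfrac13\sum_{k\ge 0}(\tfrac29)^k$.
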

\begin{proof}
Lemma \ref{l.spline} implies that the spline lines of $\ttt$ meet at the centroid $c$ of $T$.
It suffices to show that $A(\ttt\cap \tri p_ip_jc)=\frac 4 7A(\tri p_ip_jc)$ for each $\{i,j\}\sbs \{1,2,3\}$; thus, without loss of generality, we will show that this holds for $i=1,j=2$.  

Let $\ttt_3=\ttt\cap \tri p_1p_2c$, and let $\ttt_3^C=\tri p_1p_2c\stm \ttt_3$.  We aim to compute the area of the complement $\ttt_3^C$ using our recursive description of Apollonian curves.  Step 1 of each stage of the recursive description computes a splitting point $s'$ relative to points $p_1',p_2',c'$, and $\ttt_3^C$ is the union of the triangles $\tri p_1'p_2's'$ for all such triples of points encountered in the procedure.  As the median lines of any triangle divide it into 6 regions of equal area, we have for each such triple that $A(p_1'p_2's')=\frac 1 3A(p_1'p_2'c').$

Meanwhile, step 2 each each stage of the recursive construction computes new intersection points $c_1',c_2'$ with which to carry out the procedure recursively.  The sum of the area of the two triangles $\tri p_1',c_1',s'$ and $\tri s'c_2' p_2'$ is 
\[
A(\tri p_1',c_1',s')+A(\tri s',c_2',p_2')=\tfrac 5 9 A(\tri p_1'p_2' s')-\tfrac 1 3 A(\tri p_1'p_2' s')=\tfrac 2 9 A(\tri p_1'p_2' s'),
\]
Since $\frac 5 9 A(\tri p_1'p_2' s')$ is the portion of the area of the triangle $p_1'p_2's'$ which lies between the lines $p_1p_2$ and $c_1',c_2'$.  Thus, the area $A(\ttt_3^C)$ is given by 
\begin{equation*}
A(\tri p_1p_2c)\cdot \left(\tfrac 1 3+\left(\tfrac 2 9\right) \tfrac 1 3 + \left(\tfrac 2 9\right)^2 \tfrac 1 3 + \left(\tfrac 2 9\right)^3 \tfrac 1 3+\cdots\right)=
\tfrac 3 7 A(\tri p_1p_2 c).\qedhere
\end{equation*}
\end{proof}

We conclude this section with some geometric bounds on Apollonian triangles.
The following Observation is easily deduced from part \ref{i.succsep} of Proposition \ref{p.bounds}:
\begin{observation}
  Given a proper Apollonian triangle with vertices $v_1,v_2,v_3$ generated from a non-initial circle $C$ and parent triple of circles $(C_1,C_2,C_3)$, the angles $\angle v_iv_jv_k$ $(\{i,j,k\}=\{1,2,3\})$ are all $> \arctan(3/4)>\frac \pi 5$ if $C$ has smaller radius than each of $C_1,C_2,C_3$.\qed
\label{o.Aintangles}
\end{observation}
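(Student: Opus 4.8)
The plan is to reduce the desired angle bound to part~\ref{i.succsep} of Proposition~\ref{p.bounds} by reading off the directions of the three sides of the Euclidean triangle $\tri v_1v_2v_3$ from the half-angle tangent rule of Theorem~\ref{t.appconv}. Write $C_0=C$ for the central circle, so that by properness $C$ is the successor of the triple $C_1,C_2,C_3$; let $c_0$ be its center, and measure all angles $\alpha(\cdot)$ with respect to $c_0$. The vertex $v_i$ of $\ttt_C$ corresponds to the circle $C_i$, and the side $v_iv_j$ is precisely the chord joining the two endpoints of the Apollonian curve $\gamma_{ij}$ associated to the triple $(C_0,C_i,C_j)$.

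First I would pin down the direction of each side. By the recursive, circle-free description of Apollonian curves in Section~\ref{s.arec}, the tangent line to $\gamma_{ij}$ at its splitting point is parallel to the chord joining its endpoints, i.e.\ parallel to the side $v_iv_j$. The splitting point is $\gamma_{ij}(\alpha(s_{ij}))$, where $s_{ij}$ denotes the successor of $(C_0,C_i,C_j)$, so Theorem~\ref{t.appconv} shows that this tangent, and hence the side $v_iv_j$, makes angle $\tfrac12\alpha(s_{ij})$ with the positive $x$-axis. Consequently the interior angle of $\tri v_1v_2v_3$ at $v_i$ is
\[
\angle v_jv_iv_k \;=\; \tfrac12\bigl|\alpha(s_{ij})-\alpha(s_{ik})\bigr| \;=\; \tfrac12\,\angle c_{s_{ij}}\,c_0\,c_{s_{ik}},
\]
the last equality holding because $\alpha(s_{ij}),\alpha(s_{ik})$ are the angular positions, as seen from $c_0$, of the centers of $s_{ij}$ and $s_{ik}$.

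Next I would match this to Proposition~\ref{p.bounds}. Applying that proposition with generating triple $(C_i,C_j,C_k)$ and successor $C_0=C$, the two grandchild successors appearing in part~\ref{i.succsep} are exactly $s_{ij}$ (the successor of $C_0,C_i,C_j$) and $s_{ik}$ (the successor of $C_0,C_i,C_k$); both contain $C_i$ and $C$, as required by the statement of the proposition. Part~\ref{i.succsep} then gives $\angle c_{s_{ij}}\,c_0\,c_{s_{ik}}\ge 2\arctan(3/4)$, so dividing by $2$ yields $\angle v_jv_iv_k \ge \arctan(3/4)$. Since $\tan(\pi/5)<3/4$, we get $\arctan(3/4)>\pi/5$, and running the identical argument at $v_j$ and $v_k$ handles the remaining two angles. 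The inequality is strict for a genuine proper triple of distinct circles, the extremal value $2\arctan(3/4)$ being attained only in a degenerate limit.

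The main obstacle I expect is bookkeeping rather than substance. One must (i) correctly identify the side $v_iv_j$ with the splitting-point tangent direction $\tfrac12\alpha(s_{ij})$, using that the endpoints of $\gamma_{ij}$ are the vertices $v_i,v_j$; and (ii) verify that the interior angle is genuinely $\tfrac12\,\angle c_{s_{ij}}\,c_0\,c_{s_{ik}}$ and not its supplement, i.e.\ that the half-angle difference lands in $[\arctan(3/4),\pi/2]$ so that no reflex ambiguity arises. Once the correspondence $s_{ij}\leftrightarrow C_4$ and $s_{ik}\leftrightarrow C_5$ of Proposition~\ref{p.bounds} is confirmed, the bound drops out immediately.
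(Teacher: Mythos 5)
Your proof is correct and takes the same route the paper intends: the paper offers no written argument beyond stating that the observation ``is easily deduced from part \ref{i.succsep} of Proposition \ref{p.bounds},'' and your deduction---reading off the direction of each side $v_iv_j$ as the splitting-point tangent angle $\tfrac12\alpha(s_{ij})$ via Theorem \ref{t.appconv} and the circle-free description, then halving the central-angle bound of part \ref{i.succsep}---is exactly that deduction carried out in full, with the only delicate point (the supplement ambiguity) correctly identified and harmless.
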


Recall that Theorem \ref{t.appconv} implies that pairs of boundary curves of an Apollonian triangle meet their common vertex at a common angle, and that there is thus a unique line tangent to both curves through their common vertex.  We call such lines $L_1,L_2,L_3$ for each vertex $v_1,v_2,v_3$ the \emph{median lines} of the Apollonian triangle, motivated by the fact that Lemma \ref{L.median} implies that they are median lines of the triangle $\tri v_1v_2v_3$.

\begin{observation}
\label{o.Acntangles}
The pairwise interior angles of the median lines $L_1,L_2,L_3$ of a proper Apollonian triangle all lie in the interval $(\frac \pi 2,\frac {3\pi}{4})$.
\end{observation}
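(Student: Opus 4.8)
The plan is to translate the statement into a constraint on the angles that the three defining circles subtend at the centre of the inner circle, and then to verify that constraint from external tangency together with the smallness of a successor circle.

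Let $C_1,C_2,C_3$ be the triple of externally tangent circles and $C_0=C_4$ the inner successor, with centres $c_i$ and radii $r_i$; since the triangle is proper we have $r_0<r_i$ for $i=1,2,3$. By Theorem~\ref{t.appconv} the median line $L_i$ at the vertex $v_i$ is tangent there to the two boundary curves and hence has direction $\alpha(C_i)/2$, where $\alpha(C_i)$ is the argument of $c_i-c_0$. Consequently the acute angle between $L_i$ and $L_j$ equals half the angle between the vectors $c_i-c_0$ and $c_j-c_0$, namely $\tfrac12\angle c_ic_0c_j$, so the pairwise interior (obtuse) angle is $\pi-\tfrac12\angle c_ic_0c_j$; equivalently this is the angle $\angle v_icv_j$ at the centroid $c$, and the three such angles sum to $2\pi$. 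Thus the desired conclusion $\pi-\tfrac12\angle c_ic_0c_j\in(\tfrac\pi2,\tfrac{3\pi}4)$ is exactly equivalent to
\[
\angle c_ic_0c_j\in\Bigl(\tfrac\pi2,\pi\Bigr)\qquad\text{for each pair } i\neq j .
\]

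The main step, and the main obstacle, is the lower bound $\angle c_ic_0c_j>\tfrac\pi2$. Since all tangencies are external, $\lvert c_0c_i\rvert=r_0+r_i$, $\lvert c_0c_j\rvert=r_0+r_j$, and $\lvert c_ic_j\rvert=r_i+r_j$, so by the law of cosines $\angle c_ic_0c_j>\tfrac\pi2$ is equivalent to $(r_i+r_j)^2>(r_0+r_i)^2+(r_0+r_j)^2$, which simplifies to $r_ir_j>r_0(r_0+r_i+r_j)$. Writing $a=1/r_i$, $b=1/r_j$, $\kappa=1/r_0$, and $d\ge0$ for the curvature of the remaining circle, this reads $\kappa^2-(a+b)\kappa-ab>0$. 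Because $C_0$ is the successor of $C_i,C_j$ and the third circle, the Descartes circle relation gives $\kappa=a+b+d+2\sqrt{ab+bd+da}\ge(\sqrt a+\sqrt b)^2$; as the upward-opening quadratic $\kappa^2-(a+b)\kappa-ab$ is increasing for $\kappa\ge(a+b)/2\le(\sqrt a+\sqrt b)^2$, its value there is the lower bound, and one computes it to be $2\sqrt{ab}\,(a+b)+3ab>0$, so the inequality holds strictly. I would handle the degenerate case in which one of the three circles is a line separately: then the relevant direction from $c_0$ is the inward normal to the line, and a direct computation gives $\cos\angle c_ic_0c_\ell=(r_0-r_i)/(r_0+r_i)<0$, again forcing the angle to exceed $\tfrac\pi2$.

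For the upper bound $\angle c_ic_0c_j<\pi$ I would use that the three angles $\angle c_ic_0c_j$ sum to $2\pi$, the successor's centre being surrounded by the three outer centres since the tangency points are distinct; combined with the three lower bounds this gives $\angle c_ic_0c_j<2\pi-\tfrac\pi2-\tfrac\pi2=\pi$. (Alternatively this is part~\ref{i.csep} of Proposition~\ref{p.bounds}, with strictness because the inner centre lies strictly inside the triangle of outer centres.) Assembling the two bounds yields $\angle c_ic_0c_j\in(\tfrac\pi2,\pi)$, and hence the claim, with both endpoints excluded because $r_0<r_i$ is strict. The only genuinely nontrivial input is the Descartes-relation estimate behind the lower bound; everything else is bookkeeping with the parameterization of Section~\ref{s.mg} and the tangent-angle formula of Theorem~\ref{t.appconv}.
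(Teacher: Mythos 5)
Your argument is correct, and its skeleton is the same as the paper's: via Theorem~\ref{t.appconv} the interior angle between $L_i$ and $L_j$ is $\pi-\tfrac12\angle c_ic_0c_j$, one bound on this quantity comes from a geometric estimate on the circle centers, and the other from the three angles at $c_0$ summing to $2\pi$. The difference lies in what you prove versus what you cite. The paper's two-line proof invokes part~(\ref{i.csep}) of Proposition~\ref{p.bounds}, which is stated without proof, and as written there is a mismatch: part~(\ref{i.csep}) asserts $\angle c_ic_3c_j\le\pi$, which translates to the lower bound $\alpha_{ij}\ge\tfrac\pi2$, whereas the text claims it yields the upper bound $\tfrac{3\pi}4$; the inequality actually needed for $\alpha_{ij}<\tfrac{3\pi}4$ is the strict bound $\angle c_ic_0c_j>\tfrac\pi2$. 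You correctly isolate this as the one nontrivial input and prove it: the law of cosines reduces it to $r_ir_j>r_0(r_0+r_i+r_j)$, i.e.\ $\kappa^2-(a+b)\kappa-ab>0$ in curvatures, and the Descartes relation $\kappa=a+b+d+2\sqrt{ab+bd+da}\ge(\sqrt a+\sqrt b)^2$ with $d\ge 0$ gives the value $2\sqrt{ab}(a+b)+3ab>0$ at the minimum admissible $\kappa$ (I checked this computation; it is right), with the case of a line among $\{C_i,C_j\}$ handled separately by the explicit formula $\cos\angle=(r_0-r_i)/(r_0+r_i)<0$. This is a genuine gain in completeness over the paper's treatment. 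Two small points to tidy: you should remark that a proper Apollonian triangle admits at most one line among $C_1,C_2,C_3$ (two parallel lines would force $r_0$ to equal the radius of the third circle, contradicting properness), and the identification of the \emph{interior} angle with the obtuse option $\pi-\tfrac12\angle c_ic_0c_j$ rather than $\tfrac12\angle c_ic_0c_j$ deserves one sentence of justification (the sector $\angle v_icv_j$ at the centroid contains the ray along $L_k$ toward the midpoint of $v_iv_j$, so it is the union of two sectors and is the supplementary choice); the consistency check against the sum $2\pi$ that you give is suggestive but does not by itself exclude mixed choices of supplements. Neither point affects the substance of the argument.
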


\begin{proof}
Part \ref{i.csep} of Proposition \ref{p.bounds}  gives that the interior angles of the median lines of the corresponding Apollonian triangle must satisfy $\alpha_i\leq \pi-\frac \pi 4=\frac 3 4\pi$.  The lower bound follows from $\alpha_1+\alpha_2+\alpha_3=2\pi$.
\end{proof}

\section{Fractal solutions to the sandpile PDE}
\label{s.pq}

\noindent Our goal now is to prove that Apollonian triangulations exist, and that they support piecewise quadratic solutions to the sandpile PDE which have constant Hessian on each Apollonian triangle. We prove the following theorems in this section:

\begin{theorem}
To any mutually externally tangent circles $C_1,C_2,C_3$ in an Apollonian circle packing $\aaa$, there exists a corresponding Apollonian triangulation $\sss$.  Moreover, the closure of $\sss$ is convex.
\label{t.sandfrac}
\end{theorem}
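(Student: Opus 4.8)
The plan is to build $\sss$ explicitly by a recursive placement of triangles, since the uniqueness of the triangulation up to translation and scaling is already in hand; what remains is to exhibit one realization and to verify that the right-angle incidences and the convexity of $\bar\sss$ hold. Throughout I will work through the dictionary of Remark \ref{r.meetingpoint}: placing $\ttt_{C'}$ so that it meets $\ttt_C$ \emph{at a vertex sitting at the point} $\gamma(\alpha(C'))$ on the appropriate boundary curve $\gamma$ of $\ttt_C$ is equivalent to demanding that the two boundary curves cross orthogonally. Thus the whole construction can be phrased purely in terms of where vertices are to land on previously drawn Apollonian curves, and by Theorem \ref{t.Auni} each individual piece $\ttt_C$ is then completely pinned down once its three vertices are known.

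First I would fix the three degenerate triangles $\ttt_{C_1},\ttt_{C_2},\ttt_{C_3}$ and the central triangle $\ttt_{C_4}$ for $C_4$ the successor of $C_1,C_2,C_3$, choosing the common scale and placement so that the three pairwise meeting points of the degenerate triangles are three prescribed vertices $v_1,v_2,v_3$ (which, by Proposition \ref{p.ydet} and the uniqueness discussion, determine everything). I then proceed recursively over the tree of triples generated from $(C_1,C_2,C_3)$ by repeatedly taking successors: each already-placed triple $(C^1,C^2,C^3)$ borders a curvilinear gap, into which I insert the Apollonian triangle $\ttt_C$ for the successor $C$ of that triple, its three vertices being forced to be the points at which the three bordering curves are to be met; the recursion then continues into the three smaller gaps thereby created. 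The heart of the matter is the \emph{consistency} of this step: one must check both that the vertex position dictated by the two curves bordering a given gap edge agree, and that the inserted affine triangle $\ttt_C$ actually crosses those curves at right angles rather than merely passing through the correct point. This is exactly the compatibility between the affine median/centroid description of Apollonian curves (the splitting-point recursion underlying Lemma \ref{l.spline}) and the conformal tangency data of the circles: Lemma \ref{L.median}, together with the involution of Remark \ref{r.dual}, is what forces the splitting points and spline (tangent) lines computed from the three vertices of $\ttt_C$ to coincide with the meeting points and perpendicular directions demanded by the packing.

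To know that the recursion genuinely fills a region, so that $\sss$ is well defined and $\bar\sss$ has no missed interior gaps, I would run the area bookkeeping of Lemma \ref{L.47}: each inserted Apollonian triangle occupies $\tfrac47$ of the Euclidean triangle on its three vertices, so at every stage a fixed proportion of the residual area is consumed and the diameters of the remaining gaps tend to $0$ (the contraction being quantified as in the bound $\beta\le\tfrac23$ from the proof of Theorem \ref{t.appcurve}); the lower angle bound $\arctan(3/4)>\tfrac\pi5$ of Observation \ref{o.Aintangles} keeps the Euclidean triangles uniformly non-degenerate, so no piece can collapse. Disjointness of triangles corresponding to disjoint circles is then inherited from the nesting of the gaps.

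Finally, for convexity of $\bar\sss$ I would trace its outer boundary and argue that the tangent direction turns monotonically, in a single rotational sense, through a total angle of $2\pi$. Along each Apollonian boundary curve the tangent sits at angle $\alpha/2$ and turns monotonically by Theorem \ref{t.appconv}, so no concavity can arise on the curved portions; the remaining turns occur where the outer tangent segments of the degenerate triangles abut, and there the right-angle incidences together with the median-line angle bounds of Observation \ref{o.Acntangles} (pairwise interior angles confined to $(\tfrac\pi2,\tfrac{3\pi}4)$) force each turn to be through an angle strictly less than $\pi$ of fixed sign. I expect the main obstacle to be precisely the consistency check of the recursive step, that is, reconciling the purely affine nature of Apollonian triangles (determined by their three vertices) with the rigid Euclidean right-angle gluing; this is where Lemma \ref{L.median} and Remark \ref{r.dual} must be applied with care, and it is the only place where the two a priori unrelated structures must be shown to agree.
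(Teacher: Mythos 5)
Your plan correctly isolates the crux of the theorem, but it does not resolve it, and the tools you invoke are not sufficient to do so. In the recursive step the three previously placed boundary curves already dictate the three points $q_i=\gamma_i(\alpha_i(C))$ at which the successor triangle $\ttt_C$ must attach, and Remark \ref{r.meetingpoint} does make the right-angle condition at each $q_i$ essentially automatic once the tangent-angle normalization of Theorem \ref{t.appconv} is in force. What is \emph{not} automatic is that the triangle $\tri q_1q_2q_3$ has the correct \emph{shape}: the Apollonian triangle associated to the triple with successor $C$ is rigid up to translation and scaling only, and Definition \ref{d.appcurve} pins the directions of its three chords $q_i-q_j$ to the square-root lines $\ell^{1/2}(\cdot)$ of the relevant successor circles. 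So the recursion closes up only if the three forced points satisfy two nontrivial real (angular) compatibility conditions. Appealing to Theorem \ref{t.Auni} does not help here: it produces an Apollonian triangle with vertices $q_1,q_2,q_3$ only as an \emph{affine} image of the canonical one, which in general is not the triangle $\ttt_C$ of the packing and need not meet the bordering curves orthogonally. Lemma \ref{L.median} and Remark \ref{r.dual} govern the median/splitting structure of curves attached to a single circle $C_0$ at a time, and they do not, at least not without a substantial additional argument, deliver the cross-triangle metric identity you need; asserting that they ``force'' the consistency is the gap.

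This is exactly the point at which the paper abandons the purely geometric route---it states explicitly that it does not see a simple geometric proof of Theorem \ref{t.sandfrac}---and instead proves existence in tandem with Theorem \ref{t.pq}: Lemma \ref{l.V4exists} produces a successor quadratic $\phi_0$ matching the three ambient quadratics to first order at points $y_i$, and Lemma \ref{rank1difference} together with Observations \ref{o.succ} and \ref{o.vecsq} converts those $C^{1,1}$ matching conditions into precisely the rank-one relations \eqref{phii}--\eqref{phi0} that identify $y_i$ with $q_i$ and force each chord $q_i-q_j$ onto the required line $\ell^{1/2}(B_{ij})$. To salvage your construction you must either import that analytic argument or supply a genuinely geometric proof of the shape-compatibility of $\tri q_1q_2q_3$; as written, the step you yourself flag as the only place where the two structures must be shown to agree is asserted rather than proved. (Your convexity and area-filling sketches are broadly in line with Section \ref{s.fm}, but they are moot until the existence step is closed.)
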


\begin{theorem}
  For any Apollonian triangulation $\sss$ there is a piecewise quadratic $C^{1,1}$ map $u: \bar \sss \to \R$ such that for each Apollonian triangle $\ttt_C$ comprising $\sss$, the Hessian $D^2 u$ is constant and equal to $m(C)$ in the interior of $\ttt_C$.
\label{t.pq}
\end{theorem}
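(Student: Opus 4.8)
The plan is to prescribe on each Apollonian triangle $\ttt_C$ the quadratic $u_C(x) = \tfrac12 x^t m(C) x + b_C\cdot x + d_C$ with the required Hessian $m(C)$, and to choose the linear parts $b_C$ and constants $d_C$ so that the assembled function is $C^1$ (hence, given a uniform second-derivative bound, $C^{1,1}$) across the fractal interfaces. Since distinct triangles meet only at vertices, the real content is the matching along each Apollonian boundary curve $\gamma$ of $\ttt_C$, which is decorated at a dense set of points $\gamma(\alpha(C'))$ by descendant triangles $\ttt_{C'}$ touching it at right angles (Remark \ref{r.meetingpoint}).

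The analytic engine is the following local identity. Let $C$ have center $(a,b)$ and radius $r$, so $m(C)=M(a,b,r+2)$, and let $p=(a+r\cos\alpha,\,b+r\sin\alpha)$ be the point of $C$ at angle $\alpha$. Writing $T_p:=M(a+r\cos\alpha,\,b+r\sin\alpha,\,2)$ for the trace-$2$ matrix centered at $p$, one computes $m(C)-T_p=M(-r\cos\alpha,-r\sin\alpha,r)$, which by the criterion $M(u_1,u_2,c)\ge0\iff c\ge(u_1^2+u_2^2)^{1/2}$ is rank $1$ and nonnegative. By Observation \ref{o.vecsq} it equals $w\otimes w$ with $w^2=re^{i(\alpha+\pi)}$, so $w$ points at angle $\alpha/2+\pi/2$ and $\ker(w\otimes w)$ points at angle $\alpha/2$. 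By Theorem \ref{t.appconv} this is exactly the tangent direction of $\gamma$ at $\gamma(\alpha)$. Consequently, letting $\tilde u$ denote the piecewise-quadratic function built on the descendant side (whose Hessians tend to $T_{\gamma(\alpha)}$ as the descendant circles shrink to the touching point) and $\tau$ the unit tangent to $\gamma$ at $\gamma(\alpha)$, we have $(m(C)-T_{\gamma(\alpha)})\tau=(w\otimes w)\tau=0$ since $\tau\perp w$. Thus the tangential derivatives of $\nabla u_C$ and of $\nabla\tilde u$ agree all along $\gamma$, so matching the full gradient at the dense set of touching points forces $u_C$ and $\tilde u$ to agree to first order along the entire curve, giving $C^1$ gluing across $\gamma$.

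With this in hand I would construct $u$ by recursion on the generations of $\aaa^-(C_1,C_2,C_3)$: fix the linear parts of the three degenerate boundary triangles, and for each newly added successor circle $C=s(C^1,C^2,C^3)$ choose $b_C$ (then $d_C$) so that $\nabla u_C$ (resp.\ $u_C$) matches the already-constructed neighbors at the tangency vertices. The subtlety is that $\ttt_C$ touches three older triangles, so $b_C$ satisfies three vector equations while having only two degrees of freedom; the crux is to show these are consistent. Using pairwise consistency of the parents at their mutual tangency point $t_{ij}$, this reduces to the identity $(m(C^i)-m(C))(v_i-t_{ij})=(m(C^j)-m(C))(v_j-t_{ij})$ for the tangency points $v_i,v_j$ of $C$ with $C^i,C^j$. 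Each factor $m(C^i)-m(C)$ is a difference of rank-$1$ matrices $w\otimes w-w'\otimes w'$ with $w\perp w'$ (their kernels being the mutually perpendicular tangent directions of the two Apollonian curves meeting at $v_i$, by the external tangency at $v_i$ and the identity above), and the equality follows from the median/centroid description of splitting points in Lemma \ref{L.median} together with Observation \ref{o.succ}. I expect this simultaneous-matching consistency to be the main obstacle.

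Finally, for $C^{1,1}$ regularity the traces $\trace(m(C))=r_C+2$ are uniformly bounded and the centers lie in a fixed bounded region, so $\|m(C)\|$ is bounded by an absolute constant, giving a uniform almost-everywhere bound on $D^2u$; since $\bar\sss$ is convex (Theorem \ref{t.sandfrac}) and $u$ is $C^1$ with bounded second derivatives off the measure-zero residual set, $\nabla u$ is Lipschitz and $u\in C^{1,1}(\bar\sss)$. The geometric decay needed to extend $\nabla u$ continuously to the residual fractal, and to drive the recursion's convergence, comes from the contraction estimate $\beta<1$ already used in Theorem \ref{t.appcurve} together with the angle bounds of Proposition \ref{p.bounds} and Observation \ref{o.Aintangles}, which force the diameters of successive Apollonian triangles to shrink geometrically.
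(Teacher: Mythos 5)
Your architecture is sound and in fact runs parallel to the paper's: the paper also builds $u$ by recursion over generations of the packing, adding one quadratic piece per successor circle and gluing to first order at the three touching points (its ``Claim'' in Section~\ref{s.pq}). Your local identity $m(C)-T_p=w\otimes w$ with $\ker(w\otimes w)$ along the tangent of $\gamma$ at $\gamma(\alpha)$ is correct and is essentially the paper's use of Observation~\ref{o.vecsq} together with the external-tangency discussion preceding Observation~\ref{o.succ}; likewise your reduction of the gradient-matching consistency to
\[
\bigl(m(C^i)-m(C)\bigr)(v_i-t_{ij})=\bigl(m(C^j)-m(C)\bigr)(v_j-t_{ij})
\]
is the right overdetermined system to worry about.

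The gap is that you never prove this identity -- and, as you yourself note, it is the main obstacle; everything else in the proposal is routine. Asserting that it ``follows from the median/centroid description of splitting points in Lemma~\ref{L.median} together with Observation~\ref{o.succ}'' is not a proof: the identity couples the metric data of the circles (the rank-one decompositions $w_i\otimes w_i - w_i'\otimes w_i'$) to the affine positions of the touching points $v_i$ and the parents' tangency points $t_{ij}$, and verifying it is exactly where the paper expends its effort. The paper's route is Lemma~\ref{rank1difference} (three pairwise $C^1$-compatible quadratics at three points have Hessians that are rank-one perturbations, in the perpendicular directions, of a common matrix) plus Lemma~\ref{l.V4exists}, which \emph{constructs} the matching quadratic $\phi_0$ explicitly by solving the tangency systems $f_i=g_i$, $f_i'=g_i'$ along three rays and locating the apex $X_0$ at the point with trilinear coordinates $\bigl(\tfrac{\trace(A_i)-\trace(B)}{\trace(A_i)}\bigr)^{-1/2}$; it then shows \emph{a posteriori} that the resulting contact points coincide with the points $\gamma_i(\alpha_i(B_0))$ on the Apollonian curves (using Observation~\ref{o.succ} and acuteness to rule out the spurious solution $B=B_k$). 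Your proposal would need an equivalent computation. Two further loose ends: (i) after fixing $b_C$ you still have three value conditions for the single unknown $d_C$, and this consistency is not addressed (the paper's Lemma~\ref{l.V4exists} handles values and gradients simultaneously); (ii) your $C^1$ gluing along a whole boundary curve from agreement at a dense set of touching points needs the uniform modulus coming from the geometric decay estimates, which you correctly gesture at but which must be combined with the full-measure statement of Section~\ref{s.fm} to extend $\nabla u$ to the residual set.
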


Theorem \ref{t.pq} implies Theorem \ref{t.mainpq} from the Introduction via Proposition \ref{p.classical}, by taking $Y=\sss$ and $Z=\bar \sss$, where $\sss=\sss(A_1,A_2,A_3)$ is the Apollonian triangulation generated by the triple of circles $c(A_i)$ for $i=1,2,3$.  Using the fact that $\sss$ has full measure in $\bar \sss$, proved in Section \ref{s.fm}, this theorem constructs piecewise-quadratic solutions to the sandpile PDE via Proposition \ref{p.classical}. 

We will prove Theorems \ref{t.sandfrac} and \ref{t.pq} in tandem; perhaps surprisingly, we do not see a simple geometric proof of Theorem \ref{t.sandfrac}, and instead, in the course of proving Theorem \ref{t.pq}, will prove that certain piecewise-quadratic approximations to $u$ exist and use constraints on such constructions to achieve a recursive construction of approximations to $\sss$.

\subsection{The recursive construction}
We begin our construction of $u$---and, simultaneously $\sss$, which will be the limit set of the support of the approximations to $u$ we construct---by considering the three initial matrices $A_i = m(C_i)$ for $i=1,2,3$.

Observation \ref{o.succ} implies that there are vectors $v_1,v_2,v_3$ such that 
\begin{equation*}
A_i = A_4^- + v_i \otimes v_i \quad \mbox{for each } i=1,2,3.
\end{equation*}
We may then select distinct $p_1, p_2, p_3 \in \R^2$ such that $v_i \cdot (p_j - p_k) = 0$ for $\{ i, j, k \} = \{ 1, 2, 3 \}$.  Observation \ref{o.vecsq} and Definition \ref{d.appcurve} imply that we can choose degenerate Apollonian triangles $\ttt_{A_i}$ corresponding to $(A_i,A_j,A_k)$ ($\{i,j,k\}=\{1,2,3\}$) meeting at the points $p_1,p_2,p_3$.  Note that the straight sides of distinct $\ttt_{A_i}$ meet only at right angles.

It is easy to build a piecewise quadratic map $u_0 \in C^{1,1}(\ttt_{A_1}\cup \ttt_{A_2}\cup \ttt_{A_3})$ whose Hessian lies in the set $\{ A_1, A_2, A_3 \}$:  for example, we can simply define $u_0$ as
\begin{equation}
u_0(x) := \frac{1}{2} x^t A_4^- x + \frac{1}{2} (v_i \cdot (x - p_j))^2 \quad \mbox{for }x \in \ttt_{A_i}\mbox{ and }i \neq j.
\label{l.u0}
\end{equation}

We now extend this map to the full Apollonian triangulation by recursively choosing quadratic maps on successor Apollonian triangles that are compatible with the previous pieces. The result is a piecewise-quadratic $C^{1,1}$ map whose pieces form a full measure subset of a compact set.
By a \emph{quadratic function} on $\R^2$ we will mean a function of the form $\phi(x) = x^t A x + b^t \cdot x + c$ for some matrix $A \in S_2$, vector $b \in \R^2$ and $c \in \R$.
Letting $(1,2,3)^3$ denote $\{(1,2,3),(2,3,1),(3,1,2)\}$, the heart of the recursion is the following claim, illustrated in Figure \ref{f.claim}.

\begin{figure}[t]
\begin{tikzpicture}[scale=1.4,thick]
\filldraw (-0.00000000,-1.73205081) circle (1pt);
\filldraw (1.73205081,0.86602540) circle (1pt);
\filldraw (-1.73205081,0.86602540) circle (1pt);
\filldraw (0.00000000,1.15470054) circle (1pt);
\filldraw (-0.86602540,-0.57735027) circle (1pt);
\filldraw (0.86602540,-0.57735027) circle (1pt);
\draw (-0.00000000,-1.93205081) node {$p_1$};
\draw (1.93205081,0.86602540) node {$p_2$};
\draw (-1.93205081,0.86602540) node {$p_3$};
\draw (0.00000000,1.35470054) node {$q_1$};
\draw (-1.06602540,-0.67735027) node {$q_2$};
\draw (1.06602540,-0.67735027) node {$q_3$};
\draw (-1.73205081,0.86602540) -- (-1.60375075,0.90879209) -- (-1.41130066,0.96225045) -- (-1.17608388,1.01570881) -- (-0.96225045,1.05847549) -- (-0.72703367,1.09055051) -- (-0.44905021,1.12262552) -- (-0.19245009,1.14400887) -- (0.00000000,1.15470054) -- (0.19245009,1.14400887) -- (0.44905021,1.12262552) -- (0.72703367,1.09055051) -- (0.96225045,1.05847549) -- (1.17608388,1.01570881) -- (1.41130066,0.96225045) -- (1.60375075,0.90879209) -- (1.73205081,0.86602540);
\draw (-0.00000000,-1.73205081) -- (-0.06415003,-1.65720911) -- (-0.16037507,-1.53960072) -- (-0.27798346,-1.38991731) -- (-0.38490018,-1.25092558) -- (-0.50250857,-1.09055051) -- (-0.64150030,-0.89810042) -- (-0.76980036,-0.71634200) -- (-0.86602540,-0.57735027) -- (-0.96225045,-0.42766687) -- (-1.09055051,-0.22452510) -- (-1.22954224,0.00000000) -- (-1.34715063,0.19245009) -- (-1.45406734,0.37420851) -- (-1.57167573,0.57735027) -- (-1.66790078,0.74841702) -- (-1.73205081,0.86602540);
\draw (1.73205081,0.86602540) -- (1.66790078,0.74841702) -- (1.57167573,0.57735027) -- (1.45406734,0.37420851) -- (1.34715063,0.19245009) -- (1.22954224,-0.00000000) -- (1.09055051,-0.22452510) -- (0.96225045,-0.42766687) -- (0.86602540,-0.57735027) -- (0.76980036,-0.71634200) -- (0.64150030,-0.89810042) -- (0.50250857,-1.09055051) -- (0.38490018,-1.25092558) -- (0.27798346,-1.38991731) -- (0.16037507,-1.53960072) -- (0.06415003,-1.65720911) -- (-0.00000000,-1.73205081);
\draw (0.00000000,1.15470054) -- (-0.01069167,1.07629495) -- (-0.03207501,0.96225045) -- (-0.06415003,0.82682261) -- (-0.09622504,0.70565033) -- (-0.13899173,0.57735027) -- (-0.19245009,0.42766687) -- (-0.24590845,0.29223903) -- (-0.28867513,0.19245009) -- (-0.34213349,0.09978894) -- (-0.41697519,-0.02138334) -- (-0.50250857,-0.14968340) -- (-0.57735027,-0.25660012) -- (-0.65219197,-0.34926127) -- (-0.73772534,-0.44905021) -- (-0.81256705,-0.52745580) -- (-0.86602540,-0.57735027);
\draw (-0.86602540,-0.57735027) -- (-0.80187537,-0.54883914) -- (-0.70565033,-0.51320024) -- (-0.58804194,-0.47756133) -- (-0.48112522,-0.44905021) -- (-0.36351684,-0.42766687) -- (-0.22452510,-0.40628352) -- (-0.09622504,-0.39202796) -- (0.00000000,-0.38490018) -- (0.09622504,-0.39202796) -- (0.22452510,-0.40628352) -- (0.36351684,-0.42766687) -- (0.48112522,-0.44905021) -- (0.58804194,-0.47756133) -- (0.70565033,-0.51320024) -- (0.80187537,-0.54883914) -- (0.86602540,-0.57735027);
\draw (0.86602540,-0.57735027) -- (0.81256705,-0.52745580) -- (0.73772534,-0.44905021) -- (0.65219197,-0.34926127) -- (0.57735027,-0.25660012) -- (0.50250857,-0.14968340) -- (0.41697519,-0.02138334) -- (0.34213349,0.09978894) -- (0.28867513,0.19245009) -- (0.24590845,0.29223903) -- (0.19245009,0.42766687) -- (0.13899173,0.57735027) -- (0.09622504,0.70565033) -- (0.06415003,0.82682261) -- (0.03207501,0.96225045) -- (0.01069167,1.07629495) -- (0.00000000,1.15470054);
\end{tikzpicture}
\caption{The Apollonian curves $\gamma_i,\gamma'_i$ $(i=1,2,3)$ in the claim.\label{f.claim}}
\end{figure}
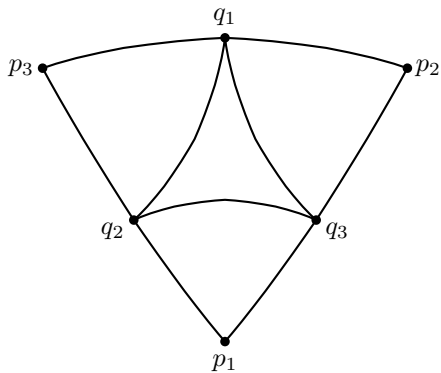

\begin{claim*}
Suppose $B_0$ is the successor of a triple $(B_1,B_2,B_3)$, and that for each $(i,j,k)\in (1,2,3)^3$, we have that $\gamma_i$ is an Apollonian curve for $(B_i,B_j,B_k)$ from $p_k$ to $p_j$, $\phi_i$ is a quadratic function with Hessian $B_i$, and the value and gradient of $\phi_i,\phi_j$ agree at $p_k$ for each $k$.  

Then there is a quadratic function $\phi_0$ with Hessian $B_0$ whose value and gradient agree with that of $\phi_i$ at each $q_i:=\gamma_i(\alpha_i(B_0))$, and for each $(i,j,k)\in(1,2,3)^3$, there is an Apollonian curve $\gamma_i'$ from $q_j$ to $q_k$ corresponding to the triple $(B_0,B_j,B_k)$.  (Here, the $\alpha_i$ denotes the angle function $\alpha$ defined with respect to $B_i$.) 
\end{claim*}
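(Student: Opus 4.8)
I would prove the two assertions—the successor quadratic $\phi_0$ and the child curves $\gamma_i'$—separately, although both ultimately rest on the median geometry of Section~\ref{s.arec}. For the curves, Theorem~\ref{t.appcurve} already supplies, for each $(i,j,k)\in(1,2,3)^3$, an Apollonian curve associated to $(B_0,B_j,B_k)$, unique up to a similarity $\gamma\mapsto a\gamma+\mathbf b$; the task is only to place it with endpoints exactly $q_j$ and $q_k$. Note first that each $q_i=\gamma_i(\alpha_i(B_0))$ is the \emph{splitting point} of the parent curve $\gamma_i$, since $B_0$ is the successor of $(B_1,B_2,B_3)$. Because the center-difference vectors of the pairs $(B_j,B_0)$ and $(B_0,B_j)$ are opposite, $\alpha_0(B_j)=\alpha_j(B_0)+\pi$, so Theorem~\ref{t.appconv} forces the tangent of $\gamma_i'$ at its endpoint $q_j$ to be perpendicular to the tangent of the parent curve $\gamma_j$ at its splitting point $q_j$ (and likewise at $q_k$). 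These prescribed endpoint tangents are exactly the median lines of the prospective Apollonian triangle $\ttt_{B_0}$ at $q_j,q_k$; matching them together with the chord direction $q_j-q_k$ uses up the similarity freedom and pins down $\gamma_i'$. The right-angle condition in the definition of an Apollonian triangulation is then automatic by Remark~\ref{r.meetingpoint}.

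For $\phi_0$ the structural input is Observation~\ref{o.succ}: as $B_0$ is the successor, $B_i-B_0^-=v_i\otimes v_i$ is rank one for each $i$, while $B_0-B_0^-=\mu I$ with $\mu>0$, so $B_i-B_0=v_i\otimes v_i-\mu I$. Hence the unique quadratic with Hessian $B_0$ matching $\phi_i$ in value and gradient at $q_i$ is
\[
\phi_0^{(i)}(x):=\phi_i(x)+\tfrac{\mu}{2}\,|x-q_i|^2-\tfrac12\,(v_i\cdot(x-q_i))^2 .
\]
The entire first assertion thus reduces to showing $\phi_0^{(1)}=\phi_0^{(2)}=\phi_0^{(3)}$. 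Since these share the Hessian $B_0$, each pairwise difference is affine, so I must only check that its (constant) gradient and its value vanish. The hypothesis that $\phi_i,\phi_j$ agree to first order at $p_k$ gives $\phi_i-\phi_j=\tfrac12(v_i\cdot(x-p_k))^2-\tfrac12(v_j\cdot(x-p_k))^2$, and substituting this the gradient condition collapses to the identity
\[
(v_i\cdot(q_i-p_k))\,v_i-(v_j\cdot(q_j-p_k))\,v_j=\mu\,(q_i-q_j),
\]
with an analogous scalar identity governing the values.

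Proving these two identities is the main obstacle, and it is genuinely geometric rather than formal: they encode where the splitting points $q_i$ sit relative to the vertices $p_k$ and the square-root directions $v_i$. Here I would feed in the median description of Section~\ref{s.arec}. By Observation~\ref{o.vecsq}, $v_i^2$ is the center difference of $C_i$ and $C_0$, so $v_i$ is parallel to the tangent of $\gamma_i$ at its splitting point $q_i$, hence to the chord $p_j-p_k$, and $|v_i|^2=r_i+r_0$. Moreover $q_i$ is the centroid of the triangle formed by the endpoints of $\gamma_i$ and the intersection of its endpoint tangents (Lemma~\ref{L.median}). A convenient way to package the verification is the concurrency from Lemma~\ref{l.spline} applied to $\ttt_{B_0}$: its median lines, which at $q_i$ are normal to $v_i$, meet at the centroid $G$ of $\tri q_1q_2q_3$, so that $v_i\cdot(q_i-G)=0$ for each $i$. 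I expect the gradient identity to follow by combining this relation with the parent relations $v_i\parallel(p_j-p_k)$ and the quantitative bounds of Proposition~\ref{p.bounds}, and the value identity to drop out by evaluating the now-constant affine differences at a single convenient point such as $q_i$.

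Once $\phi_0$ and the $\gamma_i'$ are in hand, this claim is exactly the inductive step that drives the simultaneous construction of $u$ and $\sss$ in Theorems~\ref{t.pq} and~\ref{t.sandfrac}: the matching of value and gradient at the $q_i$ keeps the global function in $C^{1,1}$, and Proposition~\ref{p.classical} then promotes it to a viscosity solution with no extra work at the interfaces.
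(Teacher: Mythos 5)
Your reduction of the first assertion is correct as far as it goes: $\phi_0^{(i)}=\phi_i+\tfrac{\mu}{2}|x-q_i|^2-\tfrac12(v_i\cdot(x-q_i))^2$ is indeed the unique quadratic with Hessian $B_0$ matching $\phi_i$ to first order at $q_i$, and the whole content of that assertion is the pair of identities you display. But you do not prove those identities --- you say you ``expect'' them to follow from the median geometry --- and they are exactly where all of the work in this claim lives. The paper's route is structured differently: it does not try to verify consistency at the prescribed points $q_i$. Instead Lemma~\ref{l.V4exists} \emph{constructs} a quadratic with the correct Hessian together with points $y_i$ on rays from a common center $X_0$ at which first-order matching holds (the existence of a simultaneous solution comes down to choosing $X_0$ with trilinear coordinates $\left(\frac{\trace(A_i)-\trace(B)}{\trace(A_i)}\right)^{-1/2}$), and only afterwards identifies $y_i=q_i$ by applying Lemma~\ref{rank1difference} to the triples $\{p_i,p_j,y_k\}$, invoking Observation~\ref{o.succ} to see that the common base matrix is either $B_{ij}$ or $B_k$, and excluding $B_k$ using the acuteness of $\tri p_ip_jp_k$. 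Your plan inverts this order and therefore must prove the two identities directly; nothing in your sketch does so, and the concurrency relation $v_i\cdot(q_i-G)=0$ that you propose to feed in is itself a property of the correctly placed triangle $\ttt_{B_0}$, whose correct placement is part of what is being established --- so as written the argument is circular as well as incomplete.

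The second assertion has a gap of the same kind. By Theorem~\ref{t.appcurve} an Apollonian curve for $(B_0,B_j,B_k)$ is unique up to translation and \emph{real} scaling only; there is no rotational freedom. Hence to realize it with endpoints $q_j,q_k$ you must show that $q_j-q_k$ is a real multiple of $v^{1/2}(s(B_j,B_k))$ taken with respect to $B_0$. Your endpoint-tangent observation (the child curve meets the parent curves at right angles) is correct and consistent with Remark~\ref{r.meetingpoint}, but knowing the tangent lines at the two endpoints of a convex arc does not determine the chord direction, so it cannot ``use up the similarity freedom.'' In the paper this chord condition is precisely \eqref{phi0}: the rank-one decomposition from Lemma~\ref{rank1difference} applied to $\{\phi_i,\phi_j,\phi_0\}$ exhibits $B_0-B_{ij}^-$ as a multiple of $(y_i-y_j)^\perp\otimes(y_i-y_j)^\perp$, and Observation~\ref{o.vecsq} converts that into the required direction statement. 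Both halves of your argument therefore bottom out at the same missing computation, which the paper supplies through Lemmas~\ref{l.V4exists} and~\ref{rank1difference}.
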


We will first see how the claim allows the construction to work.  Defining the \emph{level} of each $A_1,A_2,A_3$ to be $\ell(A_i)=0$, and recursively setting the level of a successor of a triple $(A_i,A_j,A_k)$ as $\max(\ell(A_i),\ell(A_j),\ell(A_k))+1$, allows us to define a \emph{level-$k$ partial Apollonian triangulation} which will be the domain of our iterative constructions.

\begin{definition}
  A \emph{level-$k$ partial Apollonian triangulation} corresponding to $\{A_1,A_2,A_3\}$ is the subset $\sss_k\sbs \sss(A_1,A_2,A_3)$ consisting of the union of the Apollonian triangles $\ttt_{A}\in \sss$ for which $\ell(A)\leq k$.
\end{definition}
\noindent Note that $u_0$ is defined on a level-0 partial Apollonian triangulation.

Consider now a $C^{1,1}$ piecewise-quadratic function $u_{k-1}$ defined on the union of a level-$(k-1)$ partial Apollonian triangulation $\sss_{k-1}$, whose Hessian on each $\ttt_{A_i}\in \sss_{k-1}$ is the matrix $A_i$.  Any three pairwise intersecting triangles $\ttt_{A_i},\ttt_{A_j},\ttt_{A_k}\in \sss_{k-1}$ bound some region $R$, and, denoting by $\gamma_s$ the boundary curve of each $\ttt_{A_s}$ which coincides with the boundary of $R$ and by $p_s$ the shared endpoint of $\gamma_t,\gamma_u$ $(\{s,t,u\}=\{i,j,k\})$, the hypotheses of the Claim are satisfied for $(B_1,B_2,B_3)=(A_i,A_j,A_k)$, where $\phi_1,\phi_2,\phi_3$ are the quadratic extensions to the whole plane of the restrictions $u_{k-1}|_{\ttt_{A_i}},u_{k-1}|_{\ttt_{A_j}},u_{k-1}|_{\ttt_{A_k}}$, respectively.

Noting that the three Apollonian curves given by the claim bound an Apollonian triangle corresponding to the triple $(A_i,A_j,A_k)$, the claim allows us to extend $u_{k-1}$ to a $C^{1,1}$ function $u_{k}$ on the level-$k$ partial fractal $\sss_k$ by setting $u_{k}=\phi_0$ on the triangle $\ttt_{A_\ell}\in \sss_k$ for the successor $A_{\ell}$ of $(A_i,A_j,A_k)$, for each externally tangent triple $\{A_i,A_j,A_k\}$ in $\sss_{k-1}$.  Letting $U$ denote the topological closure of $\sss$, we can extend the limit $\bar u:\sss\to \R$ of the $u_k$ to a $C^{1,1}$ function $u:U\to \R$; to prove Theorems \ref{t.sandfrac} and \ref{t.pq}, it remains to prove the Claim, and that $\sss$ is a full-measure subset of its convex closure $Z$, so that in fact $U=Z$.  We will prove that $\sss$ is full-measure in $Z$ in Section \ref{s.fm}, and so turn our attention to proving the Claim.  We make use of the following two technical lemmas for this purpose, whose proofs we postpone until Section \ref{s.lemproofs}.

\begin{lemma}
\label{l.V4exists}
Let $p_1,p_2,p_3 \in \R^2$ be in general position, and let $v_i=(p_j-p_k)^\perp$ be the perpendicular vector for which the ray $p_i+sv_i$ $(s\in \R^+)$ intersects the segment $\overline{p_jp_k}$, for each $\{i,j,k\}=\{1,2,3\}$. If $\phi_1,\phi_2,\phi_3$ are quadratic functions satisfying
\[
D^2\phi_i=A_i,\quad D\phi_i(p_k)=D\phi_j(p_k),\quad\mbox{and}\quad \phi_i(p_k)=\phi_j(p_k)
\]
for each $\{i,j,k\}=\{1,2,3\}$,
where $A_i=B^-+v_i\otimes v_i$ and $\trace(v_i\otimes v_i)>2(\trace(B)-2)$ for some matrix $B$ and vectors $v_i$ perpendicular to $p_j-p_k$ for each $\{i,j,k\}=\{1,2,3\}$, then there is a (unique) choice of $X_0\in \R^2$, $y_i=X_0+t_iv_i$ for $t_i/(v_i\cdot p_j)>1$, and $b \in \R^2, c \in \R$ such that the map 
\begin{equation*}
\phi_0(x) := \frac{1}{2} x^t B^- x +\frac 1 2 \trace(B)\abs{x-X_0}^2 +b^t x + c
\quad\mathrm{for}\quad x \in V_4
\end{equation*}
satisfies $\phi_0(y_i)=\phi_j(y_i)$ and $\phi_0'(y_i)=\phi_j'(y_i)$ for each $\{i,j\}\sbs \{1,2,3\}$.
\end{lemma}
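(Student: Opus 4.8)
The plan is to strip off the common second-order part and reduce the problem to matching a round paraboloid against three rank-one cylinders. Write $\mu I := B_0 - B^-$, a positive multiple of the identity with $\mu=\trace(B)-2>0$, normalized so that the round part of $\phi_0$ makes $D^2\phi_0 = B^- + \mu I = B_0$. Subtracting the common quadratic $\tfrac12 x^tB^-x$ from every function, put $\tilde\phi_i:=\phi_i-\tfrac12 x^tB^-x$ and $\tilde\phi_0:=\phi_0-\tfrac12 x^tB^-x$; then $D^2\tilde\phi_i=v_i\otimes v_i$ has rank one, so $\tilde\phi_i(x)=\tfrac12(v_i\cdot x)^2+\ell_i\cdot x+c_i$, while $D^2\tilde\phi_0=\mu I$, so $\tilde\phi_0$ is a paraboloid of revolution. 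Agreement of $\phi_0$ and $\phi_i$ in value and gradient at a point is the same as agreement of $\tilde\phi_0$ and $\tilde\phi_i$ there.

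First I would use the hypotheses to pin down the data. The agreement $D\phi_i(p_k)=D\phi_j(p_k)$ reads $(v_i\cdot p_k)v_i+\ell_i=(v_j\cdot p_k)v_j+\ell_j$, and since $v_i\cdot p_j=v_i\cdot p_k$ (because $v_i\perp(p_j-p_k)$) the base-case vector $\ell_i=-(v_i\cdot p_j)v_i$ is a particular solution of these three identities. As they determine $\ell_1,\ell_2,\ell_3$ only up to a common additive vector, every admissible datum has $\ell_i=\ell^\ast-(v_i\cdot p_j)v_i$ for a single $\ell^\ast$, and the value agreements then fix the $c_i$ up to one common constant. Absorbing $\ell^\ast\cdot x$ into the free term $b^tx$ and the constant into $c$, I may assume the canonical form $\tilde\phi_i(x)=\tfrac12\bigl(v_i\cdot(x-p_j)\bigr)^2$ and seek $\tilde\phi_0(x)=\tfrac\mu2|x-X_0|^2+c$; at the end one recovers $b=\ell^\ast$.

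Next I would solve the matching at $y_i=X_0+t_iv_i$. Both $D\tilde\phi_0(y_i)=\mu t_iv_i$ and $D\tilde\phi_i(y_i)=\bigl(v_i\cdot(y_i-p_j)\bigr)v_i$ point along $v_i$, so the gradient condition is a single scalar equation giving, with $w_i:=v_i\cdot(X_0-p_j)$ and $\kappa_i:=|v_i|^2-\mu$,
\[ t_i=\frac{w_i}{\mu-|v_i|^2}=-\frac{w_i}{\kappa_i}. \]
The hypothesis $\trace(v_i\otimes v_i)=|v_i|^2>2\mu$ makes $\kappa_i>\mu>0$, which is exactly the invertibility of $\mu I-v_i\otimes v_i$ needed to solve for $y_i$. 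Using $v_i\cdot(y_i-p_j)=\mu t_i$, the value equation $\tilde\phi_0(y_i)=\tilde\phi_i(y_i)$ collapses to $c=-\tfrac\mu2\,w_i^2/\kappa_i$. Hence the entire system reduces to one geometric demand: the weighted squared distances $w_i^2/\kappa_i$ from $X_0$ to the three edge-lines $\{v_i\cdot(x-p_j)=0\}$ must coincide. Once such an $X_0$ is found, $c$, the $t_i$, the $y_i$, and $b$ are all determined, and $D^2\phi_0=B_0$ by construction.

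The crux, and the step I expect to be the main obstacle, is producing the correct $X_0$ and verifying the sidedness $t_i/(v_i\cdot p_j)>1$. Existence of a point equalizing $w_1^2/\kappa_1=w_2^2/\kappa_2=w_3^2/\kappa_3$ I would obtain from a concurrency argument: each equation $w_i^2/\kappa_i=w_j^2/\kappa_j$ factors into a pair of lines through the vertex where edge-lines $i$ and $j$ meet (a weighted angle-bisector pair), and the trigonometric form of Ceva's theorem shows the internal bisectors concur at a unique interior point---the generalized incenter for the side-weights $\kappa_i^{-1/2}$, reducing to the ordinary incenter when the $\kappa_i$ agree. It then remains to check that this solution has the sign pattern making each $y_i=X_0-(w_i/\kappa_i)v_i$ the inner vertex of the new triangle, i.e.\ $t_i/(v_i\cdot p_j)>1$; this is where the quantitative properness bound $|v_i|^2>2\mu$ (equivalently, that the successor circle is strictly smaller than each of its three parents) is used, and it simultaneously forces the sign pattern---hence the whole construction---to be unique, yielding the asserted uniqueness.
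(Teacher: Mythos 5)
Your proposal is correct and follows essentially the same route as the paper's proof: subtract the common part $\tfrac12 x^tB^-x$, reduce the value/gradient matching to scalar equations along the rays $X_0+\R^+v_i$, solve them explicitly, and observe that consistency forces the distances from $X_0$ to the three edge-lines into a prescribed positive ratio, so that $X_0$ is the unique interior point with those trilinear coordinates (your weighted-bisector/Ceva concurrency is just a proof of that standard fact, which the paper simply invokes). The final sidedness check you flag as the main obstacle is actually immediate from your own formulas --- for $X_0$ interior one has $w_i<0$ and $\kappa_i>0$, hence $t_i=-w_i/\kappa_i>0$ and $v_i\cdot(y_i-p_j)=-\mu w_i/\kappa_i>0$ --- so each $y_i$ lies strictly beyond the line $p_jp_k$ as required.
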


\begin{lemma}
\label{rank1difference}
Suppose the points $p_1, p_2, p_3 \in \R^2$ are in general position and the quadratic functions $\varphi_1, \varphi_2, \varphi_3 : \R^2 \to \R$ satisfy
\begin{equation*}
\varphi_i(p_k) = \varphi_j(p_k) \quad \mbox{and} \quad D\varphi_i(p_k) = D\varphi_j(p_k),
\end{equation*}
for $\{ i, j, k \} = \{ 1, 2, 3 \}$.  There is a matrix $B$ and coefficients $\alpha_i \in \R$ such that
\begin{equation}
\label{l.rank1}
D^2 \varphi_i = B^- + \alpha_i (p_j - p_k)^\perp \otimes (p_j - p_k)^\perp,
\end{equation}
for $i = 1, 2, 3$.
\end{lemma}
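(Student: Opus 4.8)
The plan is to reduce the statement to the existence of a single symmetric matrix $C$ (playing the role of $B^-$) and then to recover $B$ at the end. Write $H_i := D^2\varphi_i$, $e_i := p_j - p_k$ and $w_i := (p_j-p_k)^\perp$ for $\{i,j,k\}=\{1,2,3\}$, so that $e_1+e_2+e_3 = 0$; general position makes any two of the $e_i$ independent, hence also any two of the $w_i$. A nonzero symmetric matrix annihilating $e_i$ has rank $1$ with range orthogonal to $e_i$, so it is a scalar multiple of $w_i\otimes w_i$. Thus the desired conclusion $H_i = B^- + \alpha_i\, w_i\otimes w_i$ is equivalent to the existence of a symmetric $C$ with $(H_i - C)e_i = 0$, i.e. $C e_i = H_i e_i$ for $i=1,2,3$; one then defines $\alpha_i\, w_i\otimes w_i := H_i - C$ and takes $B$ to be the preimage of $C$ under the bijection $A\mapsto A^-$ of $S_2$ (its linear part is invertible, so it is onto).

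Next I would extract the linear identities forced by the matching hypotheses. Since $\varphi_i - \varphi_j$ is a quadratic vanishing to first order at $p_k$, it equals $\tfrac12 (x-p_k)^t\Delta_{ij}(x-p_k)$ with $\Delta_{ij} := H_i - H_j$, and the three differences sum to zero identically in $x$. Reading off the degree-one term of this identity gives $\Delta_{12}p_3 + \Delta_{23}p_1 + \Delta_{31}p_2 = 0$, which is equivalent to $H_1 e_1 + H_2 e_2 + H_3 e_3 = 0$. This is precisely the consistency condition that makes the (a priori overdetermined) system $Ce_i = H_i e_i$ solvable: $C$ is determined by its action on the basis $e_1,e_2$, and the third equation $Ce_3 = H_3 e_3$ then holds automatically because $e_3 = -(e_1+e_2)$. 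Evaluating the same identity at $p_1$, where the summand $\varphi_2 - \varphi_3$ vanishes, yields the scalar relation $e_2^t\Delta_{12}e_2 + e_3^t\Delta_{31}e_3 = 0$.

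It remains to check that the matrix $C$ so produced is symmetric. Because the $H_i$ are symmetric, $C$ is symmetric if and only if $e_2^t(Ce_1) = e_1^t(Ce_2)$, which unwinds to the single off-diagonal compatibility $e_1^t\Delta_{12}e_2 = 0$. To verify this I would expand every bilinear quantity $e_a^t\Delta_{\bullet}e_b$ in the basis $e_1,e_2$ using $e_3 = -(e_1+e_2)$ and $\Delta_{31} = -\Delta_{12}-\Delta_{23}$; the two scalar components of the degree-one relation, together with the vertex relation from the previous step, then combine by elementary linear elimination to force $e_1^t\Delta_{12}e_2 = 0$. With this in hand $C$ is symmetric, so $H_i - C = \alpha_i\, w_i\otimes w_i$ as required.

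The main obstacle is exactly this last vanishing. Gradient matching alone (the degree-one relation) only shows that the three off-diagonal products $e_1^t\Delta_{12}e_2$, $e_2^t\Delta_{23}e_3$, $e_3^t\Delta_{31}e_1$ are \emph{equal} to one another; it is the value-matching, entering through the vertex relation, that pins this common value to zero, and keeping track of which hypothesis does what is the delicate point. As an independent sanity check one can observe that the hypothesis set $\{(H_1,H_2,H_3)\in S_2^3 : \text{matching quadratics exist}\}$ and the conclusion set $\{(C + \alpha_i\, w_i\otimes w_i)_i\}$ are both $6$-dimensional linear subspaces of $S_2^3$; verifying the easy inclusion (Hessians of the conclusion form satisfy both the degree-one and the vertex relations) together with this dimension count recovers the lemma as well.
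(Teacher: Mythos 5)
Your proposal is correct, and the elimination you defer to the end does close: writing the two components of the degree-one relation as $e_1^t\Delta_{13}e_1+e_1^t\Delta_{23}e_2=0$ and $e_2^t\Delta_{13}e_1+e_2^t\Delta_{23}e_2=0$ and substituting into the $p_1$ vertex relation (after expanding $e_3^t\Delta_{31}e_3$ via $e_3=-(e_1+e_2)$) leaves exactly $-e_1^t\Delta_{12}e_2=0$. Your route is organized differently from the paper's, though the two arguments pivot on the same scalar identity. The paper expands each difference $D^2\varphi_j-D^2\varphi_i$ in the rank-one basis $\{q_i^\perp\otimes q_i^\perp,\ q_j^\perp\otimes q_j^\perp,\ q_i^\perp\otimes q_j^\perp+q_j^\perp\otimes q_i^\perp\}$ of $S_2$, derives $q_i^t(A_j-A_i)q_j=0$ by adding $q_k^t\cdot(\text{gradient equation})$ to the value equation to kill the cross coefficient $\gamma_{ij}$, and then matches the remaining coefficients across the three pairs to see that $A_i-\alpha_i q_i^\perp\otimes q_i^\perp$ is constant. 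You instead posit the common matrix $C=B^-$ directly as the unique linear map with $Ce_i=H_ie_i$ (solvable on the basis $e_1,e_2$, with the third equation automatic from the degree-one relation) and observe that the only thing to check is symmetry of $C$, which unwinds to the same identity $e_1^t\Delta_{12}e_2=0$. Your framing makes the role of $B^-$ conceptually transparent and isolates cleanly which hypothesis does what (gradient matching gives solvability and the equality of the three cross terms; value matching pins their common value to zero), while the paper's computation produces the coefficients $\alpha_i$ explicitly and never has to treat symmetry as a separate issue since it works entirely inside $S_2$. Your dimension-count sanity check at the end is a nice independent confirmation but, as you note, is not needed.
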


Observe now that in the setting of the claim, the conditions of Lemma \ref{l.V4exists} are satisfied for $A_i:=B_i$ ($i=1,2,3$), $B:=B_0$ and where $v_i$ is the vector for which $B_i-B_0=v_i\otimes v_i$ for each $i=1,2,3$; indeed Observations \ref{o.vecsq} and the definition of Apollonian curve ensure that $v_i$ is perpendicular to $p_j-p_k$ for each $\{i,j,k\}=\{1,2,3\}$.  Let now $X_0$, $t_i$, and $y_i$ be as given in the Lemma.  We wish to show that $y_i=\gamma_i(\alpha(B_0))$ for each $i$.  Letting $B_{ij}$ denote the successor of $(B_0,B_i,B_j)$ for $\{i,j\}\sbs \{1,2,3\}$, we apply Lemma \ref{rank1difference} to the triples $\{p_i,p_j,y_k\}$ of points and $\{\phi_i,\phi_j,\phi_0\}$ of functions for each of the three pairs $\{i,j\}\sbs \{1,2,3\}$.  In each case, we are given some matrix $B$ for which 
\begin{align}
&\label{phii}B_i=B^-+\alpha_{k,s} (p_k - y_i)^\perp \otimes (p_k - y_i)^\perp,\\
&\label{phij}B_j=B^-+\alpha_{k,s} (p_k - y_j)^\perp \otimes (p_k - y_j)^\perp,\mbox{ and}\\
&\label{phi0}B_0=B^-+\alpha_{k,0} (y_i - y_j)^\perp \otimes (y_i - y_j)^\perp 
\end{align}
for real numbers $\alpha_{k,i}\in \R$.  

Observation \ref{o.succ} now implies that either $B=B_{ij}$ or $B=B_k$; the latter possibility cannot happen, however: if we had $B=B_k$, then as $\bar \rho(B_k-B_0)=-\bar \rho(B_0-B_k)$, Observation \ref{o.vecsq} would imply that $y_i-y_j$ is perpendicular to $p_i-p_j$.  This is impossible since the constraint $t_s/(v_s\cdot p_t)>1$ for $\{s,t\}=\{i,j\}$ in Lemma \ref{l.V4exists} implies that the segment $y_iy_j$ must intersect the segments $p_ip_k$ and $p_jp_k$, yet part \ref{i.csep} of Proposition \ref{p.bounds} implies that $\triangle p_ip_jp_k$ is acute.  So we have indeed that the matrix $B$ given by the applications of Lemma \ref{rank1difference} to the triple $(B_0,B_i,B_j)$ is $B_{ij}$, for each $\{i,j\}\sbs \{1,2,3\}$.

For each $\{i,j,k\}=\{1,2,3\}$, Observation \ref{o.vecsq}, Definition \ref{d.appcurve}, Theorem \ref{t.appcurve}, and the constraints \eqref{phii}, \eqref{phij} now imply that $y_i=q_i:=\gamma_i(\alpha_i(B_0))$, as the point $\gamma_i(\alpha_i(B_0))$ is determined by the endpoints $\gamma_i(\alpha_k(B_j)),\gamma_i(\alpha_i(B_k))$ and the condition from Definition \ref{d.appcurve} that $\gamma_i(\alpha_i(B_j))-\gamma_k(\alpha_i(B_0))$ and $\gamma_i(\alpha_i(B_k))-\gamma_i(\alpha_i(B_0))$ are multiples of $v_i^{1/2}(s_i(B_j,B_0))$ and $v_i^{1/2}(s_i(B_k,B_0))$, respectively (and so of $p_k-y_i$ and $p_k-y_j$, respectively, by \eqref{phii} and \eqref{phij}).

Similarly, the constraint \eqref{phi0} implies that $q_i-q_j$ is a multiple of $v_{0}^{1/2}(B_{ij})$ for the function $v^{1/2}_{0}$ defined with respect to the circle $B_0$.  Definition \ref{d.appcurve} and Theorem \ref{t.appcurve} now imply the existence of the curve $\gamma_k'$, completing the proof of the claim.

\subsection{Full measure}
\label{s.fm}
\begin{figure}[t]
\begin{center}
\includegraphics[width=.61\linewidth]{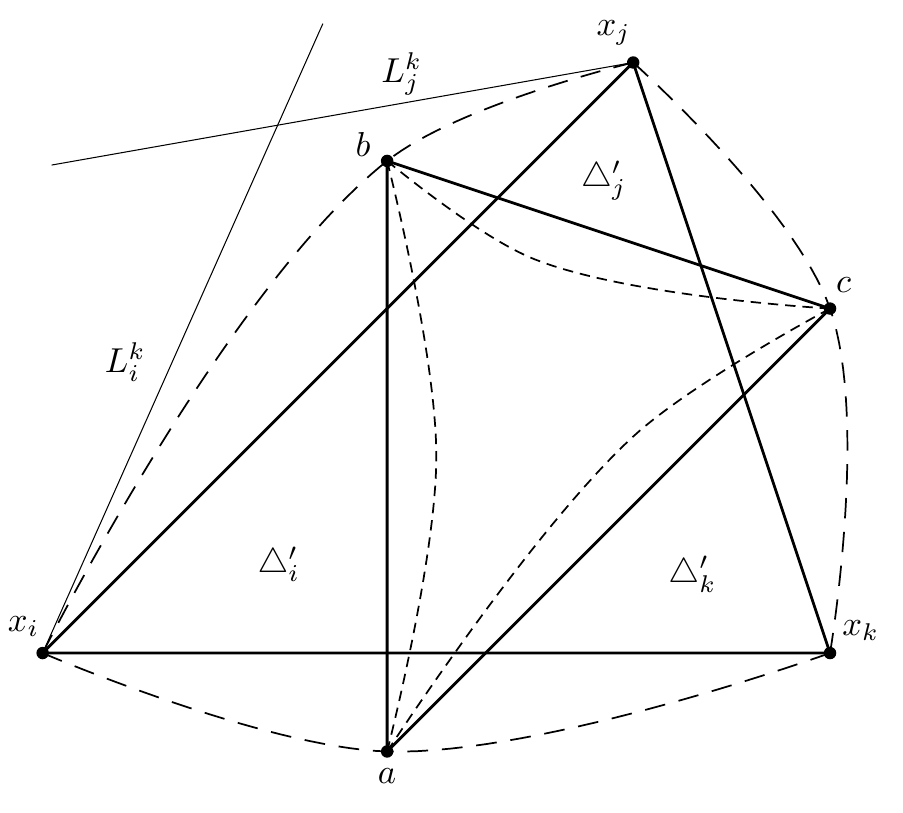}
\vspace{-1em}
\end{center}
\caption{\label{f.kappa}To show that $\sss$ has full measure in $Z$, we show that each Apollonian triangle $V_\ell$ has area which is a universal positive constant fraction of the area of the region $R_\ell$ it subdivides.  Here, the boundaries of $R_\ell$ and $V_\ell$ are shown in long- and short-dashed lines, respectively.}
\end{figure}

We begin by noting a simple fact about triangle geometry, easily deduced by applying a similarity transformation to the fixed case of $L=1$:
\begin{proposition}
\label{o.cd}
  Any angle $a$ determines constants $C_a,D_a$ such that any triangle $\tri$ which has an angle $\theta\geq a$ and opposite side length $\ell\leq L$ has area $A(\tri)\leq C_a L^2$, and any triangle which has angles $\theta_1\geq a,\theta_2\geq a$ sharing a side of length $\ell\geq L$ has area $\geq D_a L^2$.\qed
\end{proposition}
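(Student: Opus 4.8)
The plan is to reduce both assertions to a scale-normalized statement and then bound a continuous area function. Since rescaling a triangle by a factor $\lambda>0$ multiplies every side length by $\lambda$, leaves all angles fixed, and multiplies the area by $\lambda^2$, both claimed inequalities are invariant under similarity; this is exactly the reduction to $L=1$ anticipated in the statement. Concretely, for the first assertion I would rescale a given triangle by $1/\ell$ so that the side opposite the angle $\theta\geq a$ has unit length, and for the second I would rescale by $1/\ell$ so that the distinguished side (with endpoint angles $\theta_1,\theta_2\geq a$) has unit length. It then suffices to produce, respectively, an upper bound $C_a$ and a lower bound $D_a$ for the area in the unit-side cases, after which multiplying through by $\ell^2$ and using $\ell\leq L$ (resp.\ $\ell\geq L$) closes the argument.

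For the upper bound, fix the unit side and the opposite angle $\theta$. The third vertex then ranges over a circular arc subtending this chord, and the area $\tfrac12\cdot 1\cdot h$ is largest when the height $h$ is largest, i.e.\ for the isosceles configuration; an elementary computation gives maximal area $\tfrac14\cot(\theta/2)$. Since $\cot(\theta/2)$ is decreasing in $\theta$, the constraint $\theta\geq a$ yields the bound $C_a:=\tfrac14\cot(a/2)$. (Equivalently one may argue by compactness: the constraint $\theta\geq a>0$ keeps the apex within a bounded region, so the family of unit-opposite-side triangles with $\theta\geq a$ has compact closure, and area is continuous on it.)

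For the lower bound, the unit side together with the two base angles $\theta_1,\theta_2$ determines the triangle by ASA, and a direct computation gives area $f(\theta_1,\theta_2):=\dfrac{\sin\theta_1\sin\theta_2}{2\sin(\theta_1+\theta_2)}$ on the region $\theta_1,\theta_2\geq a$, $\theta_1+\theta_2<\pi$. Differentiating shows that $\partial_{\theta_1}f$ and $\partial_{\theta_2}f$ are both positive, since each partial collapses, via the angle-subtraction identity $\cos\theta_1\sin(\theta_1+\theta_2)-\sin\theta_1\cos(\theta_1+\theta_2)=\sin\theta_2$, to a manifestly positive expression. Hence $f$ is increasing in each variable and is minimized at the corner $\theta_1=\theta_2=a$, where it equals $\tfrac14\tan a$. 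This gives $D_a:=\tfrac14\tan a>0$ (note that $a<\pi/2$ is forced, since two angles each at least $\pi/2$ cannot coexist in a triangle).

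I expect the only genuine subtlety to lie in the lower bound: unlike the upper-bound case, the relevant configuration space is \emph{not} compact, because as $\theta_1+\theta_2\to\pi$ the apex angle tends to $0$ and the apex escapes to infinity. The saving observation is that the area only grows in this limit, so the infimum is attained at the interior corner rather than lost at infinity; the monotonicity computation makes this precise and pins the minimizer at $(\theta_1,\theta_2)=(a,a)$. Everything else is routine trigonometry, and assembling the two normalized bounds with the scaling reduction yields the stated constants $C_a$ and $D_a$.
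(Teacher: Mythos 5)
Your proof is correct and follows exactly the route the paper indicates (the paper only remarks that the statement is ``easily deduced by applying a similarity transformation to the fixed case of $L=1$'' and omits the details). Your scaling reduction, the isosceles maximizer giving $C_a=\tfrac14\cot(a/2)$, and the monotonicity of $f(\theta_1,\theta_2)=\frac{\sin\theta_1\sin\theta_2}{2\sin(\theta_1+\theta_2)}$ pinning the minimum at $(a,a)$ with $D_a=\tfrac14\tan a$ all check out, so you have simply supplied the elementary computation the authors left to the reader.
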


We wish to show that the interior of $\sss$ has full measure in $Z$, defined as the convex closure of $\sss$.   Recall that the straight sides of each pair of incident degenerate Apollonian triangles $V_i,V_j$ $(\{i,j\}\sbs \{1,2,3\}$ intersect at right angles, so the 6 straight sides of $V_1,V_2,V_3$ will form a convex boundary for $Z$.

Letting thus $Y_t=B\stm S_t$, we have that $Y_t$ is a disjoint union of some open sets $R_\ell$ bordered by three pairwise intersecting Apollonian triangles, and $X_{t+1}$ contains in each such region an Apollonian triangle $V_\ell$ dividing the region further.  To prove that the interior of $\sss$ has full measure in $Z$, it thus suffices to show that the area $A(V_\ell)$ is at least a universal positive constant fraction $\kappa$ of the area $A(R_\ell)$ for each $\ell$, giving then that $\mu(Y_t)\leq (1-\kappa)^{t-1} \mu(Y_1)\to_t 0$.

For $R_\ell$ bordered by Apollonian triangles $V_i,V_j,V_k$ and letting $\tri'=\tri x_ix_jx_k$ be the triangle whose vertices $x_s$ are the points of pairwise intersections $V_t,V_u$ for each $\{s,t,u\}=\{i,j,k\}$ of the Apollonian triangles bordering $R_\ell$, we will begin by noting that there is an absolute positive constant $\kappa'$ such that $A(\tri')\geq \kappa' \mu(R_\ell)$.  For each $\{s,t,u\}= \{i,j,k\}$, the segment $x_sx_t$ together with the lines $L^u_s$ and $L^u_t$ tangent to the boundary of $V_u$ at $x_s$ and $x_t$, respectively, form a triangle $\tri_u$ such that $R_\ell\sbs \tri'\cup \tri_i\cup \tri_j\cup \tri_k$.   Observations \ref{o.Aintangles}, \ref{o.Acntangles}, and \ref{o.cd} now imply that area of each $\tri_i$ is universally bounded relative to the area of $\tri'$, giving the existence $\kappa'$ satisfying $A(\tri')\geq \kappa'\mu(R_\ell)$.

It thus remains to show that the Apollonian triangle $V_\ell$ which subdivides $\ell$ satisfies $\mu(V_\ell)\geq \kappa'' A(\tri')$ for some $\kappa''$.  (It can in fact be shown that $\mu(V_\ell)=\frac{4}{21} A(\tri')$ exactly, but a lower bound suffices for our purposes.)  Considering the triangle $\tri''=\tri a b c$ whose vertices are the three vertices of $V_\ell$, there are three triangular components of $\tri'$ lying outside of  $\tri''$; denote them by $\tri'_i,\tri'_j,\tri'_k$ where $\tri'_s$ includes the vertex $x_s$ for each $s=i,j,k$.  The bound $\angle x_sx_tx_u>\frac \pi 4$ for each $\{s,t,u\}=\{i,j,k\}$ together with Observation \ref{o.Aintangles} implies there is a universal constant bounding the ratio of the area of $\tri'_s$ to $\tri''$ for each $s=i,j,k$.  Thus we have that the area of $\tri'$ is universally bounded by a positive constant fraction of the area of $\tri''$, and thus via Lemma \ref{L.47} we have that there is a universal constant $\kappa''$ such that $\mu(V_\ell)\geq \kappa'' A(\tri')$.

Taking $\kappa=\kappa'\cdot \kappa''$ we have that  $\mu(V_\ell)\geq \kappa \mu(R_\ell)$ for all $\ell$, as desired, giving that the measures $\mu(Y_t)$ satisfy $\mu(Y_t)=(1-\kappa)^{t-1}\mu(Y_1)\to 0$, so that $\mu(\sss)=\mu(Z)$.

\subsection{Proofs of two Lemmas}
\label{s.lemproofs}

\begin{proof}[Proof of Lemma~\ref{l.V4exists}]
We have 
\begin{equation*}
\phi_i(x) = A_ix +d_i =(B^-+v_i\otimes v_i)x+d_i
\end{equation*}
and the agreement of $D\phi_j,D\phi_k$ at $x_i$ $(\{i,j,k\}=\{1,2,3\})$ together with $v_i\cdot (x_j-x_k)=0$ implies that the $D\phi_i-B^-x$ is constant independent of $i$ on the triangle 
$\triangle x_1x_2x_3$, giving that
\begin{equation}
D\phi_i(x)= A_i x -(v_i\otimes v_i) x_j +d,
\label{l.uprime}
\end{equation}
for a constant $d\in \R^2$ independent of $i$.  Similarly, the value agreement constraints give that \begin{equation}
\phi_i(x)= \frac 1 2 x^tA_i x -\frac 1 2 x_j^t(v_i\otimes v_i) x_j +dx+c,
\label{l.u}
\end{equation}
for a constant $c$ independent of $i$.  Thus, by setting $D:=d$ adjusting $C$ in the definition of $u_1$ as necessary, we may assume that in fact $c$ and $d$ are 0.

Fixing any point $X_0$ inside the triangle $x_1x_2x_3$, we define for each $i$ a ray $R_i$ emanating from $X_0$ coincident with the line $\{tv_i\st t\in \R^+\}$.  Our goal is now to choose $X_0$ such that there are points $y_i$ on each of the rays $R_i$ satisfying the constraints of the Lemma.  

On each ray $R_i$, we can parameterize $\bar \phi_i:=\phi_i(x)-\frac 1 2 x^tB^-x$ as functions $f_i(t_i)=\frac 1 2 a_it^2$ $(i=1,2,3)$, and the function $\bar \phi_0:=\phi_0(x)-\frac 1 2 x^tB^-x$ as $g_i(t_i)=\frac 1 2 b(t+h_i)^2+C$, where $t_i$ is the distance from the line $\overline{x_jx_k}$ to $x\in R_i$, $h_i$ is the distance from $X_0$ to the line $\overline{x_jx_k}$, and $a_i$ and $b$ are $\trace(v_i\otimes v_i)$ and $\trace(B)$, respectively.  Moreover, since the gradients of $\bar \phi_i$ and $\bar \phi_0$ can both be expressed as multiple of $v_i$ along the whole ray $R_i$, we have for any point $x$ on $R_i\cap U_i$ at distance $t$ from $\overline{x_jx_k}$ that $f_i'(t_i)=g_i'(t_i)$ implies that $D\phi_i(x)=D\phi_0(x).$  Thus to prove the Lemma, it suffices to show that there are $X_0$ and $C$ such that for the resulting values of $h_i$, the systems
\begin{equation*}
\left\{
\begin{array}{ll}
f_i(t_i)=g_i(t_i)\\
f_i'(t_i)=g_i'(t_i)
\end{array}
\right.
\quad 
\mbox{or, more explicitly, }
\quad
\left\{
\begin{array}{ll}
\frac 1 2 a_it_i^2=\frac 1 2 b(t_i+h_i)^2+C\\
a_it_i=b(t_i+h_i)
\end{array}
\right.
\end{equation*}
have a solution over the real numbers for each $i$.

It is now easy to solve these systems in terms of $C$; for each $i$, 
\[
 t_i=\frac{bh_i}{a_i-b}  \quad \mbox{and}\quad h_i=\frac{\sqrt{-C}}{\sqrt{\frac 1 2 \left(b- \frac{b^2}{a_i-b}\right)}}
\]
gives the unique solution.   Note that $a_i>2b$ ensures that the denominator in the expressions for $h_i$ (and $t_i$) is positive for each $i$.  Since $\sqrt{-C}$ takes on all positive real numbers and $\trace(A_i)=a_i-b$, there is a (negative) value $C$ for which the distances $h_i$ are the distances from the lines $x_jx_k$ to a point $X_0$ inside $\triangle x_1x_2x_3$; it is the point with trilinear coordinates 
$
\left\{ \left(\frac{\trace(A_i)-\trace(B)}{\trace(A_i)}\right)^{-\frac 1 2}\right\}_{1\leq i \leq 3}.
$

 The Lemma is now satisfied for this choice of $C$ and $X_0$ and for the points $y_i$ on $R_i$ at distance $h_i+t_i$ from $X_0$ for $i=1,2,3$.
\end{proof}

\begin{proof}[Proof of Lemma~\ref{rank1difference}]
Let $q_1 = p_3 - p_2$, $q_2 = p_1 - p_3$, and $q_3 = p_2 - p_1$ and $A_i := D^2 \varphi_i$. Since for any individual $i=1,2,3$ we could assume without loss of generality that $\varphi_i \equiv 0$, the compatibility conditions with $\phi_j$, $\phi_k$ give
\begin{equation}
\label{gradcon}
\begin{cases}
(A_j - A_i)q_j + (A_k - A_i)q_k = 0, \\
q_j^t (A_j - A_i) q_j - q_k^t (A_k - A_i) q_k = 0
\end{cases}
\end{equation}
in each case.
If we left multiply the first by $q_k^t$ and add it to the second, we obtain
\begin{equation}
\label{crosscon}
q_i^t(A_j - A_i)q_j = 0.
\end{equation}
Since $q_i \cdot q_j \neq 0$, there are unique $\alpha_{ij}, \beta_{ij}, \gamma_{ij} \in \R$ such that
\begin{equation*}
A_j - A_i = \alpha_{ji} q_j^\perp \otimes q_j^\perp + \beta_{ji} q_i^\perp \otimes q_i^\perp + \gamma_{ij} (q_i^\perp \otimes q_j^\perp + q_j^\perp \otimes q_i^\perp), 
\end{equation*}
where $(x,y)^\perp = (-y,x)$.  Since symmetry implies $\beta_{ji} = - \alpha_{ij}$ and \eqref{crosscon} implies $\gamma_{ij} = 0$, we in fact have
\begin{equation*}
A_j - A_i = \alpha_{ji} q_j^\perp \otimes q_j^\perp - \alpha_{ij} q_i^\perp \otimes q_i^\perp.
\end{equation*}
If we substitute this into \eqref{gradcon}, we obtain
\begin{equation*}
- \alpha_{ij} (q_i^\perp \cdot q_j) q_i - \alpha_{ik} (q_i^\perp \cdot q_k) q_i = 0.
\end{equation*}
Since $q_i^\perp \cdot q_j = - q_i^\perp \cdot q_k$, we obtain $\alpha_{ij} = \alpha_{ik}$. Thus there are $\alpha_i \in \R$ such that
\begin{equation*}
A_i - A_j = \alpha_i q_i^\perp \otimes q_i^\perp - \alpha_j q_j^\perp \otimes q_j^\perp.
\end{equation*}
In particular, we see that $A_i - \alpha_i q_i^\perp \otimes q_i^\perp$ is constant.
\end{proof}

\subsection{Proof of Corollary \ref{c.match}}
This is now an easy consequence of Theorem \ref{t.pq} and the viscosity theory, via Proposition \ref{p.visc}.
\begin{proof}[Proof of Corollary \ref{c.match}]
Write $v=v_\infty$. Continuity of the derivative and value of $v$ in $U_1 \cup U_2\cup U_3$ imply that 
\[
v|_{U_i}=\frac 1 2 x^tA_i x+Dx +C \quad \mbox{for}\quad i=1,2,3
\]
for some $D\in \R^2$, $C\in \R$.  Let $\beta_i$ be the portion of the boundary of $R$ between $x_j$ and $x_k$ which does not include $x_i$, and let $v_i$ be the vector perpendicular to $x_j-x_k$ such that $x_i+tv_i$ intersects the segment $x_jx_k$.

We let $V_i=\beta_i+tv_i$ for $t\geq 0$.  The $V_i$'s are pairwise disjoint. Thus, by first restricting the quadratic pieces $U_1,U_2,U_3$ of the map $v$ to their intersection with the respective sets $V_i$, and then extending the quadratic pieces to the full $V_i$'s, we may assume that $U_i=V_i$ for each $i=1,2,3$.

We apply Lemma \ref{rank1difference} to $v|_{V_1}$, $v|_{V_2}$, $v|_{V_3}$; by Observation \ref{o.succ} there are up to two possibilities for the matrix $B$ from \eqref{l.rank1}; the fact that $\triangle x_1x_2x_3$ is a acute, however, implies that we have that $B$ is the successor of $A_1,A_2,A_3$.  Thus letting $S$ denote the Apollonian triangulation determined by $x_1,x_2,x_3$, Theorem \ref{t.pq} ensures the existence of a $C^{1,1}$ map $u$ which is piecewise quadratic whose quadratic pieces have domains forming the Apollonian triangulation $\sss$ determined by the vertices $x_1,x_2,x_3$.  Letting $U_i'$ denote the degenerate Apollonian triangle in $\sss$ intersecting $x_j$ and $x_k$ for each $\{i,j,k\}=\{1,2,3\}$, we can extend $u$ to a map $\bar u$ by extending the three degenerate pieces $U'_i$ of $\sss$ to sets $V_i'=\{x+tv_i\st x\in U_i',t\geq 0\}$.  Now we can find curves $\gamma_i$ from $x_j$ to $x_k$ lying inside $V_i\cap V_i'$, and, letting $\Omega$ be the open region bounded by the curves $\gamma_1,\gamma_2,\gamma_3$, Proposition \ref{p.visc} implies that $\bar u+Dx+C$ and $v$ are equal in $\Omega$, as they agree on the boundary $\partial \Omega=\gamma_1\cup \gamma_2\cup \gamma_3$.
\end{proof}

\section{Further Questions}
\begin{figure}
\includegraphics[width=.9\linewidth]{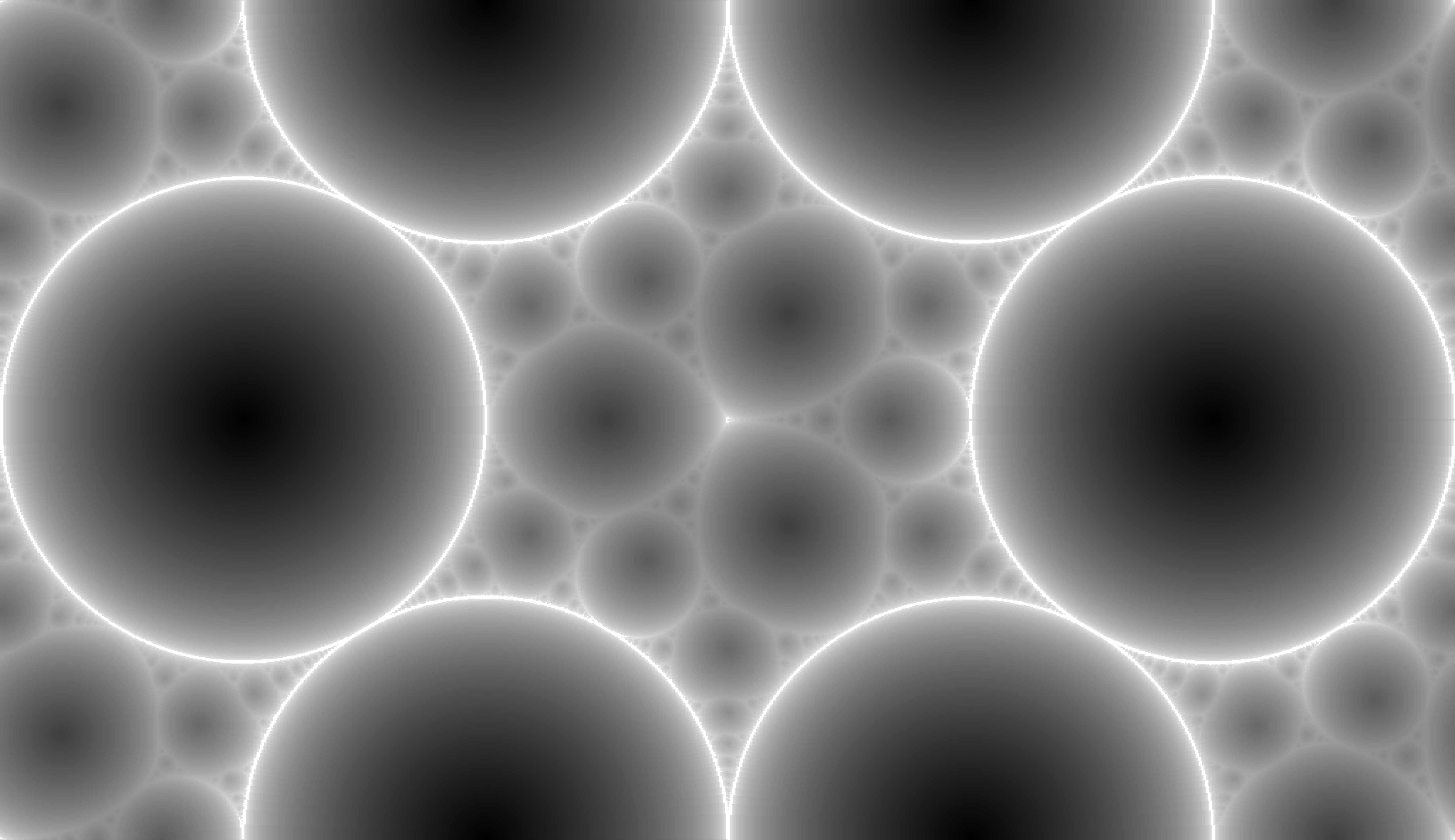}
\caption{\label{f.gammatri} The graph of the function $c \in C(\R^2)$ over the rectangle $[0,6] \times [0,6/\sqrt{3}]$, where $c$ describes the boundary \[\partial \Gamma_{\rm tri} = \{ \mbox{$\frac 23$} M(a,b,c(a,b)) : a, b \in \R \}\] of $\Gamma_{\rm tri}$.  White and black correspond to $c = 3$ and $c = 4$, respectively.}
\end{figure}

\label{s.Q}
Our results suggest a number of interesting questions. To highlight just a few, one direction comes from the natural extension of both the sandpile dynamics and the definition of $\Gamma$ to other lattices.

\begin{question}\label{q.tri}While the companion paper \cite{2014integersuperharmonic} determines $\Gamma(\Z^2)$, the analogous set $\Gamma(\LLL)$ of stabilizable matrices for other lattices $\LLL$ is an intriguing open problem.
For example, for the triangular lattice $\LLL_{\rm tri} \subseteq \R^2$ generated by $(1,0)$ and $(1/2,\sqrt 3 / 2)$,  the set $\Gamma(\LLL_{\rm{tri}})$ is the set of $2\times 2$ real symmetric matrices $A$ such that there exists $u:\LLL_{\rm tri}\to \Z$ satisfying
\begin{equation}
u\geq \frac 1 2 x^tAx\quad\mbox{and}\quad\Delta u\leq 5,
\end{equation}
where here $\Delta$ is the graph Laplacian on the lattice.
The algorithm from Section \ref{s.gamma} can be adapted to this case and we display its output in Figure~\ref{f.gammatri}.  While the Apollonian structure of the rectangular case is missing, there does seem to be a set $\ppp_{\rm tri}$ of isolated ``peaks'' such that $\bar \Gamma_{\rm tri} = \ppp_{\rm tri}^\downarrow$.  What is the structure of these peaks?  What about other lattices or graphs?  
Large-scale images of $\Gamma(\LLL)$ for other planar lattices $\LLL$ and the associated sandpiles on $G$ can be found at \cite{Wes}.
\end{question}

Although we have explored several aspects of the geometry of Apollonian triangulations, many natural questions remain. For example: 
\begin{question}
Is there a closed-form characterization of Apollonian curves?  
\end{question} 
\noindent Apollonian triangulations themselves present some obvious targets, such as the determination of their Hausdorff dimension.

\begin{bibdiv}
\begin{biblist}

\bib{Bak-Tang-Wiesenfeld}{article}{
  volume = {59},
  journal = {Phys. Rev. Lett.},
  author = {Bak, Per},
  author = {Tang, Chao},
  author = {Wiesenfeld, Kurt},
  url = {http://link.aps.org/doi/10.1103/PhysRevLett.59.381},
  doi = {10.1103/PhysRevLett.59.381},
  year = {1987},
  title = {Self-organized criticality: An explanation of the 1/f noise},
  publisher = {American Physical Society},
  pages = {381--384}
}

\bib{Bourgain-Fuchs}{article}{
	author={Jean Bourgain},
	author={Elena Fuchs},
	title={A proof of the positive density conjecture for integer {A}pollonian circle packings},
	journal={J. Amer.\ Math.\ Soc.},
	volume={24},
	number={4},
	year={2011},
	pages={945--967}
}

\bib{strings}{article}{
	author={Sergio Caracciolo},
	author={Guglielmo Paoletti},
	author={Andrea Sportiello},
	title={Conservation laws for strings in the Abelian sandpile Model},
	journal={Europhysics Letters},
	volume={90},
	number={6},
	year={2010},
	pages={60003},
	note={\arxiv{1002.3974}}
}

\bib{CPS}{article}{
	author={Sergio Caracciolo},
	author={Guglielmo Paoletti},
	author={Andrea Sportiello},
	title={Explicit characterization of the identity configuration in an Abelian sandpile model},
	journal={J. Phys.\ A: Math.\ Theor.},
	volume={41},
	year={2008},
	pages={495003},
	note={\arxiv{0809.3416}}
}

\bib{Creutz}{article}{
	author={Michael Creutz},
	title={Abelian sandpiles} ,
	journal={Nucl.\ Phys.\ B (Proc.\ Suppl.)},
	volume={20}, 
	pages={758--761},
	year={1991}
}

\bib{Crandall}{article}{
   author={Crandall, Michael G.},
   title={Viscosity solutions: a primer},
   conference={
      title={Viscosity solutions and applications},
      address={Montecatini Terme},
      date={1995},
   },
   book={
      series={Lecture Notes in Math.},
      volume={1660},
      publisher={Springer},
      place={Berlin},
   },
   date={1997},
   pages={1--43},
}

\bib{Crandall-Ishii-Lions}{article}{
   author={Crandall, Michael G.},
   author={Ishii, Hitoshi},
   author={Lions, Pierre-Louis},
   title={User's guide to viscosity solutions of second order partial
   differential equations},
   journal={Bull. Amer. Math. Soc. (N.S.)},
   volume={27},
   date={1992},
   number={1},
   pages={1--67},
   issn={0273-0979},
   review={\MR{1118699 (92j:35050)}},
   doi={10.1090/S0273-0979-1992-00266-5},
}

\bib{DM}{article}{
	journal={Discrete Math.\ Theor.\ Comp.\ Sci.},
	year={2003},
	pages={89--102},
	title={Results and conjectures on the sandpile identity on a lattice},
	author={Arnaud Dartois},
	author={Cl\'{e}mence Magnien}
}

\bib{Dhar}{article}{
   author={Dhar, Deepak},
   title={Self-organized critical state of sandpile automaton models},
   journal={Phys. Rev. Lett.},
   volume={64},
   date={1990},
   number={14},
   pages={1613--1616},
   issn={0031-9007},
}

\bib{Dhar06}{article}{
   author={Dhar, Deepak},
   title={Theoretical studies of self-organized criticality},
   journal={Physica A},
   volume={369},
   pages={29--70},
   year={2006}
}

\bib{Dhar-Sadhu-Chandra}{article}{
    author={Dhar, Deepak},
    author={Sadhu,Tridib},
    author={Chandra,Samarth},
    title={Pattern formation in growing sandpiles},
    journal={Euro.\ Phys.\ Lett.},
    volume={85},
    number={4},
    year={2009},
    note={\arxiv{0808.1732}}
}

\bib{Fey-Meester-Redig}{article}{
 author = {Fey-den Boer, Anne},
 author = {Meester, Ronald},
 author={Redig, Frank},
 year = {2009},
 title = {Stabilizability and percolation in the infinite volume sandpile model},
 journal = {Ann.\ Probab.},
JOURNAL = {Annals of Probability},
    VOLUME = {37},
    NUMBER = {2},
     PAGES = {654--675},
 note = {\arxiv{0710.0939}},
   doi = {10.1214/08-AOP415}
}

\bib{Fey-Levine-Peres}{article}{
   author={Fey, Anne},
   author={Levine, Lionel},
   author={Peres, Yuval},
   title={Growth rates and explosions in sandpiles},
   journal={J. Stat. Phys.},
   volume={138},
   date={2010},
   number={1-3},
   pages={143--159},
   note={\arxiv{0901.3805}},
   issn={0022-4715},
}

\bib{GLMWY2005I}{article}{
	author={Ronald L. Graham},
	author={Jeffrey C. Lagarias}, 
	author={Colin L. Mallows},
	author={Allan R. Wilks}, 
	author={Catherine H. Yan},
	title={Apollonian circle packings: geometry and group theory I. The {A}pollonian group}, 
	journal={Discrete Comput.\ Geom.}, 
	volume={34},
	number={4},
	pages={547--585}, 
	year={2005}
}

\bib{Kenyon02}{article}{
   author={Kenyon, R.},
   title={The Laplacian and Dirac operators on critical planar graphs},
   journal={Invent. Math.},
   volume={150},
   date={2002},
   number={2},
   pages={409--439},
   issn={0020-9910},
   review={\MR{1933589 (2004c:31015)}},
   doi={10.1007/s00222-002-0249-4},
}

\bib{LR}{article}{
	author={Yvan Le Borgne},
	author={Dominique Rossin},
	title={On the identity of the sandpile group},
	journal={Discrete Math.},
	volume={256},
	number={3},
	pages={775--790},
	year={2002}
}

\bib{2014integersuperharmonic}{article}{
  author={Levine, Lionel},
  author={Pegden, Wesley},
  author={Smart, Charles},
  title={The Apollonian structure of integer superharmonic matrices},
  note={\arxiv{1309.3267}}
}

\bib{Levine-Peres}{article}{
   author={Levine, Lionel},
   author={Peres, Yuval},
   title={Strong spherical asymptotics for rotor-router aggregation and the
   divisible sandpile},
   journal={Potential Anal.},
   volume={30},
   date={2009},
   number={1},
   pages={1--27},
   note={\arxiv{0704.0688}},
   issn={0926-2601},
   review={\MR{2465710 (2010d:60112)}},
   doi={10.1007/s11118-008-9104-6},
}

\bib{Levine-Propp}{article}{
   author={Levine, Lionel},
   author={Propp, James},
   title={What is $\dots$ a sandpile?},
   journal={Notices Amer. Math. Soc.},
   volume={57},
   date={2010},
   number={8},
   pages={976--979},
   issn={0002-9920},
   review={\MR{2667495}},
}

\bib{Liu-Kaplan-Gray}{article}{
   author={Liu, S. H.},
   author={Kaplan, Theodore},
   author={Gray, L. J.},
   title={Geometry and dynamics of deterministic sand piles},
   journal={Phys.\ Rev.\ A}, 
   volume={42},
   year={1990},
   pages={3207--3212}
}

\bib{Ostojic}{article}{
  author={Ostojic, Srdjan},
  title={Patterns formed by addition of grains to only one site of an Abelian sandpile},
  journal={Physica A},
  volume={318},
  number={1-2},
  pages={187-199},
  year={2003},
  issn={0378-4371},
  doi={10.1016/S0378-4371(02)01426-7},
}

\bib{pathesis}{thesis}{
    author={Paoletti, Guglielmo},
    title={Deterministic abelian sandpile models and patterns},
    note={Ph.D. Thesis, Universit\`{a} de Pisa. \url{http://pcteserver.mi.infn.it/~caraccio/PhD/Paoletti.pdf}},
    year={2012}
}

\bib{charlie-wes}{article}{
  author={Pegden, Wesley},
  author={Smart, Charles},
  title={Convergence of the Abelian sandpile},
  journal={Duke Math. J., to appear},
  note={\arxiv{1105.0111}}
}

\bib{Wes}{article}{
author={Wesley Pegden},
title={Sandpile galleries},
note={\url{http://www.math.cmu.edu/~wes/sandgallery.html}}
}

\bib{Redig}{article}{
	author={Frank Redig}, 
	title={Mathematical aspects of the abelian sandpile model},
	year = {2005},
	note={Les Houches lecture notes. \url{http://www.math.leidenuniv.nl/~redig/sandpilelectures.pdf}},
}

\bib{Rossin}{thesis}{
    author={Rossin, Dominique},
    title={Propri\'{e}t\'{e}s combinatoires de certaines familles dÕautomates cellulaires},
    note={Ph.D. Thesis, \'{E}cole Polytechnique},
    year={2000}
}

\end{biblist}
\end{bibdiv}

\end{document}